\newtheorem{theorem}{Theorem}
\newtheorem{corollary}[theorem]{Corollary}
\newtheorem{definition}[theorem]{Definition}
\newtheorem{lemma}[theorem]{Lemma}
\newtheorem{proposition}[theorem]{Proposition}
\newtheorem{remark}[theorem]{Remark}
\newcommand{\bR}{\mathbb{R}}
\newcommand{\bT}{\mathbb{T}}
\newcommand{\bN}{\mathbb{N}}
\newcommand{\bP}{\mathbb{P}}
\newcommand{\bE}{\mathbb{E}}
\newcommand{\cl}{\mathcal{L}}
\newcommand{\cL}{\mathcal{L}}
\newcommand{\cB}{\mathcal{B}}
\providecommand{\keywords}[1]{\textbf{\textit{Keywords}:} \quad #1}
\newcommand{\mscclass}[1]{\textbf{MSC2020:} \quad #1}
\title{Extreme Value theory and Poisson statistics for discrete time samplings of stochastic differential equations.}
\author{ F. Flandoli\thanks{Scuola Normale Superiore. Email:franco.flandoli@sns.it } \and S. Galatolo\thanks{Dipartimento di Matematica - Università di Pisa. Email: stefano.galatolo@unipi.it orcid:0000-0003-3934-5412 }  \and
P. Giulietti\thanks{Dipartimento di Matematica - Università di Pisa. Email:paolo.giulietti@unipi.it orcid:0000-0001-9604-1699} \and S. Vaienti\thanks{Aix Marseille Universit\'e, Universit\'e de Toulon, CNRS, CPT, 13009 Marseille, France. Email: vaienti@cpt.univ-mrs.fr.}} 
\date{\today}
\begin{document}

\maketitle

\begin{abstract}
We investigate the distribution and clustering of extreme events of
stochastic processes constructed by sampling the solution of a Stochastic Differential Equation 
on $\bR^n$. We do so by studying the action of an annealead transfer operators on a suitable  spaces of 
densities.  The spectral properties of such operators are obtained by employing a mixture of techniques coming from SDE theory and a functional analytic approach to dynamical systems. 
\end{abstract}

\keywords{Stochastic differential equations, extreme value theory, transfer operator, regularization by noise, perturbative spectral theory}

\mscclass{Primary/Secondary 37A50, 37H05, 60G70, 60H50}

\tableofcontents

\section{Introduction} \label{sec:intro}

In the last decades modeling based on Stochastic Differential Equations has been used extensively in  different scenarios, in particular whenever 
the  statistical properties of extreme events are important, ranging from economy  (see e.g. \cite{1K}) to climate science (see e.g. \cite{1GL}). 
In this case the SDE model becomes a forecasting tool: one can investigate the likelihood of extreme events, as well as the typical behaviour of such systems. Here we focus on the former.

We consider a system whose evolution is described by a stochastic differential equation of the following type
\begin{equation*}  \left\{
\begin{aligned}
dX_{t}  &  =b\left(X_{t}\right)  dt+dW_{t}\\
X_{0}  &  =x, 
\end{aligned} \right.
\end{equation*}
where the domain of definition is $ \bR^n$ and $W_{t}$ is the Brownian motion defined on a probability space $\left(  \Omega,\mathcal{F},\mathbb{P}\right).$
It is well known that under suitable assumptions on the function $b$ (see Section \ref{sec:mainresults} for the details of our setup) 
the problem
above has a unique solution $(X_t^x)_{t \geq 0}$ and the resulting stochastic flow has a unique stationary measure $\mu$.

We consider a specific unbounded observable $g_{x_0}:\mathbb{R}^d\to \mathbb{R}$, encoding the appearance of an extreme event for a given point $x_0$ of the phase space, and sample the evolution of $g_{x_0}$ along the trajectories of   $X_t$ at discrete times.
This is done by choosing a time step $h\geq0$, a sequence of times $t_n=hn$ and the process $g_{x_0}(X^x_{t_n}) $. 

It is well known that pinpointing  a single extreme event is still, if possible at all, beyond the reach of current techniques, yet there are various possibilities to investigate the distribution, either temporal or spatial, of such events also in the sense of their clusterization. 
Here we consider the distribution of the extreme events for the process  $g_{x_0}(X^x_{t_n}) $; in particular we would like to find a sequence $\{ u_{n} \}_{n \in \bN}, u_n \in \mathbb{R^+}$, such that the following limit exists
\begin{equation} \label{problem}
\lim_{n\rightarrow\infty}\mathbb{P}\otimes \mu \left( \left \{ (\omega, x) \ : \  \max_{k=0,...,n-1}g_{x_0} \left(  X_{t_k}^x  
\right)  \leq u_{n}  \right \}   \right)  \in\left(  0,1\right) 
\end{equation}
where the $\otimes$ represents the product of measures.
The limit above encodes the notion of  extreme event, as long as $u_n \to \infty $ for the  process $g_{x_0}(X^x_{t_n} )$ where the initial conditions $x$  vary in the full set $\bR^d$ and are weighted 
according to $\mu$. 
Theorem \ref{thm:gen2} solves this problem i.e. it is possible to characterize such sequences and find an exponential distribution for the probability of not  exceeding the thresholds $u_n$, as it is usually done in the extreme values theory.

In the literature, such limit is often referred to as Extreme Value Law (EVL); its existence and its properties are strictly connected to the distribution of the hitting time of the process to a sequence of (properly renormalized) shrinking sets (see \cite{FFT10} for an introduction to such relation). 

Successively, we will  consider the distribution of extreme occurrences in a given time interval.   More precisely we will count
the number of visits of the time-discretized solution of our SDE to a shrinking sequence of  balls $B_n$ among the first $n$ steps of length $h,$ suitably normalized with the measure of the balls. We will show that the distribution of the number of visits will converge to a standard Poisson distribution  in the limit of large $n$, see Theorem \ref{thm:Poisson}.

We remark that for both results our construction is rooted in the functional approach of \cite{KL09, Keller12, AFGV0000}. 
The tools we use to get the results are related to the study of the functional analytic properties of the transfer operator $\cl_h$ associated to the SDE system  acting on suitable functional spaces.
In particular, we will relate the probability of occurrence of the rare events with the response of the dominating eigenvalue of $\cl_h$ to the  perturbation of the system constructed by adding a small hole corresponding to the rare event, transforming the initial system into an open one.

\subsection{Literature Review}

The study of extreme events has already been carried out in various framework and there is an extensive literature.   Comprehensive surveys can be found in the book \cite{RareBook16}, with a particular focus on dynamical systems and in  \cite{1LR} with a focus on stochastic processes.

Let us comment on some works which are closer to the present one which allow us also to enlighten  the main novelties of our contribution.

In the paper \cite{1D}, an Extreme Values Law in continuous time is found  in the case of one dimensional stochastic differential equations, using the properties of the Ornstein-Uhlenbeck process (see also \cite{1LR} for another point of view on these results).
 
For diffusion processes of gradient field type in $\bR^d$, the Ph.D thesis results of Kuntz \cite[Prop. 2.1, Theorem 2.4]{1ku}, prove exponential upper/lower bounds for $P ( M_T \leq R ) $, for large $R.$ Here $M_T=\max_{0\le t\le T}\|X_t\|,$ where $\{X_t\}_{t\ge 0}$ is a $\mathbb{R}^n$-valued stationary  reversible diffusion process and $\|\cdot\|$ the euclidean norm in $\mathbb{R}^n.$
About these continuous time results, we remark that, as we show in Appendix \ref{A3}, our explicit relation between  the time $T$ and the thresholds $R$ (see equation \eqref{5t} in Theorem \ref{thm:gen2})  used for the discrete time result is not the right one in the continuous case. 
The time/threshold rescaling used in the mentioned continuous time results seems to be quite implicit and not easy to  be determined in concrete examples.

In the absence of the drift term $b$,  the discretization of $ dX_t = dW_t $ is akin to a random walk. In the paper \cite{SPZw} the hitting time distribution for random walks in the line is described; these results are equivalent to EVL as proved in \cite{FFT10}.

Most of the preceding works obtained an extreme value distributions for continuous time processes beginning with specific SDEs, while in the present work we focus on discrete time but for a large class of Stochastic Differential Equations. The latter should verify assumptions {\bf H} and {\bf AD} in section \ref{sec:mainresults}, and also the conditions stated in Theorem \ref{MPZ}.  In this case we describe not only the distribution of the  ``first" rare event, but also the distribution of multiple extremes i.e. the Poisson statistics. We believe that our techniques could be generalized to more general SDE and  we will point out where these extensions could occur.

We also  reiterate the fact that our approach is directly based on the spectral properties of the transfer operators associated to the system and its response to suitable small perturbations representing rare events. 
This technique has been successfully applied to study random dynamical systems perturbed in an {\em annealed} way \cite{AFV15, CFVY}
 and the recent paper \cite{AFGV1} developed a spectral approach for a {\em quenched} extreme value theory that considers random dynamics and random observations.\\

 \section{Main Results} \label{sec:mainresults}

We consider a   stochastic differential equation on $\bR^d$ of the following type
\begin{equation} \label{eq:system1} \left\{
\begin{aligned}
dX_{t}  &  =b\left(X_{t}\right)  dt+dW_{t}\\
X_{0}  &  =x, 
\end{aligned} \right.
\end{equation}
 Here $W_{t}$ is the Brownian motion defined on a probability space $\left(  \Omega,\mathcal{F},\mathbb{P}\right)$. 
{We make the following assumptions on $b$:
\begin{description}
\item[H] (Lipschitz drift) There is $K$ such that $\forall x,y \in {\mathbb{R}^d}$
\begin{equation*}
|b(x)-b(y)|\leq K|x-y|.
\end{equation*}
\item[AD] (Dissipativity)  We assume that there exists  constants $R_1, R_2 \in \bR $  with $R_2>0$ such that for all $x \in \bR^d $
\begin{equation} \label{eq:dissiphyp}
\langle b(x), x \rangle \leq   R_1 - R_2 \|x \|^2 .
\end{equation}
\end{description}
}

\begin{remark}
The previous assumptions include the fundamental case 
\[
b(x)  =Ax+B(x)
\]
where $A$ is a matrix such that there exists $\nu>0$ with the property that
\begin{equation}
\left\langle Ax,x\right\rangle \leq-\nu\left\Vert x\right\Vert ^{2}%
\label{assumption on A}%
\end{equation}
for all $x\in\mathbb{R}^{d}$, and $B:\mathbb{R}^{d}\rightarrow\mathbb{R}^{d}$
is Lipschitz continuous and fulfills%
\begin{equation}
\langle B(x) , x \rangle =0\label{assumption on B}%
\end{equation}
or more generally
\begin{equation}
\left\langle B\left(  x\right)  ,x\right\rangle \leq C_{1}+C_{2}\left\Vert
x\right\Vert ^{2}\qquad\text{for all }x\in\mathbb{R}^{d}%
\label{assumption on C}%
\end{equation}
for some constants $C_{1}\in\mathbb{R}$ and $C_{2}<\nu$. Indeed assumption
\textbf{H} is obviously satisfied and assumption \textbf{AD} holds because%
\[
\langle Ax+B(x)  ,x \rangle \leq C_{1}-\left(
\nu-C_{2}\right)  \left\Vert x\right\Vert ^{2}.
\]
Condition \eqref{assumption on A}  is motivated for instance by the finite
dimensional discretization of Partial Differential Equations (PDE henceforth)
of parabolic type, where $A$ is the discretization of the Laplacian or, more
generally, of the second order strongly elliptic part. Condition
\eqref{assumption on B} is motivated by the discretization of nonlinear
operators like the inertial (convective) operator of the Navier-Stokes
equations;  we impose the Lipschitz continuity on $B$ for simplicity.\footnote{This is not satisfied by quadratic operators, therefore an additional Lipschitz
cut-off is necessary to get a finite dimensional operator as above.} The
generalization of Condition \eqref{assumption on C} may help to accommodate
linear first order operators in the discretization of a PDE.

\end{remark}

Let $x_{0}\in \bR^d$ be a chosen point of the phase space. Let  $g_{x_0}: \bR^d \setminus \{x_0 \} \to \bR $ be
\begin{equation} \label{eq:distance1}
g_{x_0} (x) :=-\log d( x,x_{0})
\end{equation}
where $ d$ is the euclidean distance. 
The observable $g_{x_0}$ hence measures how far we are from our chosen $x_0$ on a logarithmic scale. 
Let $h > 0$ and  let  the sequence of  discrete times in which we sample the process be denoted as
\begin{equation}\label{TS}
t_{k}:=kh.
\end{equation}

Given the assumptions \textbf{H} and \textbf{AD}, the system \eqref{eq:system1} has a unique invariant measure $\mu$ (see Section \ref{sec:Rdperturbation}  for more details).

We now consider the problem \eqref{problem}, and  for the reader convenience we state below the first main result of our work.

 \begin{theorem} \label{thm:gen2}
 Let  $h,\tau>0$ and let $X_t$ be the solution of \eqref{eq:system1} at time $t$.
 Let $u_n$ be a real sequence such that 
 \begin{equation}\label{5t}
 n\, \mu (B_n) \to \tau
 \end{equation}
 where $B_{n}$ is the ball $B\left(  x_{0},e^{ -u_{n}}  \right)$. Let $ t_{k}:=kh$. Then
\begin{equation}\label{th2}
\lim_{n\rightarrow\infty}\mathbb{P} \otimes \mu \left( \left\{ (\omega  , x) \ : \  \max_{k=0,...,n-1}g_{x_0} \left(  X_{t_k}^x  
\right)  \leq u_{n}  \right \} \right) = e^{-\tau}.
\end{equation}
 \end{theorem}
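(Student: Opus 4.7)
The plan is to reduce the event to a spectral quantity for a punctured transfer operator and then apply the Keller-Liverani perturbation machinery behind Theorem~\ref{MPZ}. First I would rewrite
\begin{equation*}
\bP \otimes \mu\Bigl( \max_{k=0,\ldots,n-1} g_{x_0}(X_{t_k}^x) \le u_n \Bigr) \;=\; \int \widetilde{\cl}_n^{\,n-1}\bigl( \mathbf{1}_{B_n^c}\, \rho_\mu \bigr)\, dx,
\end{equation*}
where $\rho_\mu$ is the density of $\mu$ and $\widetilde{\cl}_n f := \mathbf{1}_{B_n^c}\, \cl_h f$ is the ``open-system'' transfer operator obtained by puncturing $\cl_h$ at the hole $B_n$. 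The factor $\mathbf{1}_{B_n^c}$ at each step encodes the condition $X_{t_k}^x \notin B_n$, and the outer integration against $\rho_\mu$ comes from weighting the initial condition by $\mu$; the identity itself is just the Markov property combined with the duality between the Koopman and transfer operators.

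Next I would view $\widetilde{\cl}_n$ as a small perturbation of $\cl_h$, the perturbation size being controlled by $\mu(B_n) \to 0$. Provided $\cl_h$ is quasi-compact on the relevant Banach space of densities, with $1$ as a simple leading eigenvalue with eigenfunction $\rho_\mu$ and the rest of the spectrum strictly inside the unit disk, the abstract result of Theorem~\ref{MPZ} yields, for all sufficiently large $n$, a simple dominant eigenvalue $\lambda_n$ of $\widetilde{\cl}_n$ together with an eigendensity $\rho_n$, a uniform spectral gap, and the convergences $\lambda_n \to 1$, $\rho_n \to \rho_\mu$. The perturbative expansion then supplies
\begin{equation*}
1 - \lambda_n \;=\; \mu(B_n)\bigl(1 + o(1)\bigr),
\end{equation*}
where the leading coefficient is $1$ (i.e.\ the extremal index equals one) because the smoothness and positivity of the Brownian transition density $p_h(x,y)$ spread the mass at each step of size $h$, preventing short-time returns to the shrinking ball $B_n$.

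Combining the two, and decomposing $\mathbf{1}_{B_n^c}\rho_\mu$ along the eigendirection $\rho_n$ plus a remainder in the spectral complement, only the leading term survives in the limit, so that
\begin{equation*}
\bP \otimes \mu\Bigl( \max_{k=0,\ldots,n-1} g_{x_0}(X_{t_k}^x) \le u_n \Bigr) \;=\; \lambda_n^{\,n-1}\bigl(1 + o(1)\bigr) \;\longrightarrow\; e^{-\tau},
\end{equation*}
since $n(1-\lambda_n) \to \tau$ by hypothesis~\eqref{5t}. The main obstacle lies in establishing the quasi-compactness of $\cl_h$, together with the uniform Lasota-Yorke inequalities and the perturbative norm bounds required by Theorem~\ref{MPZ}, on an appropriate Banach space of densities on the non-compact domain $\bR^d$: this is precisely where the dissipativity assumption \textbf{AD} enters, providing the Lyapunov control that confines the mass near the origin and allows the standard spectral machinery (normally deployed on compact phase spaces) to be adapted to the $\bR^d$ setting.
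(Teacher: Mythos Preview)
Your proposal is correct and follows essentially the same route as the paper's proof in Section~\ref{app:repfo2}: rewrite the probability as $\int \cl_{h,n}^{\,n} f_0\,dx$ via duality and the Markov property, invoke the Keller--Liverani perturbation formula to get $\lambda_n = 1 - \mu(B_n) + o(\mu(B_n))$ (with extremal index $\theta=1$, as verified in Proposition~\ref{prop:R7} from the smoothing of the transition kernel), and conclude using $n\mu(B_n)\to\tau$. One correction: the abstract perturbation result you are invoking is Proposition~\ref{thm:repfo} (Keller--Liverani), not Theorem~\ref{MPZ}; the latter is the Aronson-type density estimate of Menozzi--Pesce--Zhang, which enters only indirectly, to supply the regularization (Lemma~\ref{lem:Rdregularization}) needed for the Lasota--Yorke inequality on the weighted space $BV_2$.
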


 \begin{remark}
Analogous results holds if the system \eqref{eq:system1} is defined on $\bT^d$. As many of the constructions presented here are either not necessary or can be simplified in such case, the explicit discussion of this case will appear somewhere else. 
\end{remark}
\begin{remark}
We remark that the asymptotic law found in  \eqref{th2} does not depend on $h$. 
This is in our opinion due to the presence of a uniform noise, analogously to similar results for dynamical systems perturbed by annealed noise (see \cite{AFV15}).
In the Appendix \ref{A4} we show an example where the lack of sufficient noise gives a different asymptotic law.
\end{remark}

The second main result we prove is a refinement of the first one and is   about the distribution of multiple occurrences of the extreme events.
Let $B_n$ be again a ball centered where needed and of radius $e^{-u_n}.$ We are now interested in studying the distribution of the number of visits to the set $B_n$ in a prescribed time interval. We follow the strategy recently used in \cite{AFGV0000} for this kind of results.   
We argue  that the exponential law given by the extreme value distribution
describes the time between successive events in a Poisson process.  We begin by introducing the following random variable
$S_{n}:=\sum_{i=0}^{n-1}1_{B_n}(X^x_{ih}),$ 
which counts the number of visits of the time sampled solution to the ball $B_n$  among the first $n$ iterations of the process. In order to get a limiting distribution when $n\rightarrow \infty$ we have to rescale time as we did in (\ref{5t}). Accordingly
\begin{definition}\label{8t}
Let us take $\tau>0$ and $n\ge1.$ We  define the sequence of discrete random variables
\begin{equation}\label{6t}
S_{n,\tau}:=\sum_{i=0}^{\lfloor\frac{\tau}{\mu(B_n)}\rfloor}1_{B_n}(X^x_{ih}).
\end{equation}
We say that $S_{n,\tau}$ converge in distribution to the discrete random variable $W,$ possibly  defined on a different probability space and with distribution $\nu_W,$ if we have for any $k\in \mathbb{N}:$
\begin{equation}\label{7t}
\lim_{n\rightarrow \infty}\mathbb{P}\otimes\mu\left(\sum_{i=0}^{\lfloor\frac{\tau}{\mu(B_n)}\rfloor}1_{B_n}(X^x_{ih})=k\right)=\nu_W(\{k\}).
\end{equation}
\end{definition}
The second main result of this work is the following theorem, establishing the convergence toward a standard Poisson distribution.
\begin{theorem}\label{thm:Poisson}
 Let  $\tau>0$, $X_t$ be the solution of \eqref{eq:system1} on $\bR^d $,  let $B_{n}$ the ball $B\left(  x_{0},\exp\left(  -u_{n}\right)  \right)$ and let $u_n$ such that $ n \, \mu (B_n) \to \tau $ . Then
 $$
\lim_{n\rightarrow \infty}\mathbb{P}\otimes\mu\left(\sum_{i=0}^{\lfloor\frac{\tau}{\mu(B_n)}\rfloor}1_{B_n}(X^x_{ih})=k\right)=\frac{e^{-\tau}\tau^k}{k!}.
 $$
    \end{theorem}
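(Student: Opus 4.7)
My plan is to establish the Poisson law through a probability--generating--function argument based on a twisted transfer operator, following the scheme of \cite{AFGV0000} already exploited in the proof of Theorem~\ref{thm:gen2}. For $z\in[0,1]$, set $f_{z,n}:=\1_{B_n^c}+z\,\1_{B_n}$, so that $z^{\1_{B_n}(x)}=f_{z,n}(x)$ and hence $z^{S_{n,\tau}}=\prod_{i=0}^{N}f_{z,n}(X^{x}_{ih})$ with $N:=\lfloor\tau/\mu(B_n)\rfloor$. Writing $\rho_0$ for the density of $\mu$ and $\cl_h$ for the time-$h$ transfer operator, the Markov property gives
\[
\bE_{\bP\otimes\mu}\!\left[z^{S_{n,\tau}}\right]\;=\;\int f_{z,n}\,\widetilde{\cl}_{z,n}^{\,N}(f_{z,n}\rho_0)\,dx,
\qquad
\widetilde{\cl}_{z,n}\phi:=\cl_h(f_{z,n}\phi).
\]
At $z=0$ the twisted operator $\widetilde{\cl}_{z,n}$ coincides with the ``open'' transfer operator whose leading eigenvalue already drives the EVL in Theorem~\ref{thm:gen2}.

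The next step is to invoke the Keller--Liverani perturbative framework already set up in the paper. Since $\widetilde{\cl}_{z,n}-\cl_h=-(1-z)\,\cl_h(\1_{B_n}\,\cdot)$ has weak norm bounded by $(1-z)\mu(B_n)\to 0$, quasi-compactness and the spectral gap of $\cl_h$ transfer to $\widetilde{\cl}_{z,n}$ for $n$ large, uniformly in $z\in[0,1]$. Each $\widetilde{\cl}_{z,n}$ then possesses a simple dominant eigenvalue $\lambda_{z,n}$ with rank-one spectral projector $\Pi_{z,n}$, and a complementary part of spectral radius at most some $\sigma<1$. The first-order expansion reads
\[
\lambda_{z,n}=1-(1-z)\mu(B_n)+o(\mu(B_n)),\qquad n\to\infty,
\]
with no ``extremal-index'' correction, in accordance with the pure $e^{-\tau}$ law of Theorem~\ref{thm:gen2} (recovered at $z=0$) and with the fact that the diffusive noise prevents clustering of short returns.

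Decomposing $\widetilde{\cl}_{z,n}^{\,N}=\lambda_{z,n}^{\,N}\,\Pi_{z,n}+R_{z,n}^{\,N}$ with $\|R_{z,n}^{\,N}\|\le C\sigma^{N}$ makes the complementary part negligible as $N\to\infty$. The continuity of $\Pi_{z,n}$ under perturbation, together with $f_{z,n}\rho_0\to\rho_0$ and $\int f_{z,n}(\cdot)\,dx\to\int(\cdot)\,dx$ in the duality used, yields $\int f_{z,n}\,\Pi_{z,n}(f_{z,n}\rho_0)\,dx\to\int\rho_0\,dx=1$. Combining with
\[
\lambda_{z,n}^{\,N}=\bigl(1-(1-z)\mu(B_n)+o(\mu(B_n))\bigr)^{\lfloor\tau/\mu(B_n)\rfloor}\longrightarrow e^{-\tau(1-z)},
\]
we obtain $\bE[z^{S_{n,\tau}}]\to e^{\tau(z-1)}$, the probability generating function of a Poisson random variable of parameter $\tau$. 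The continuity theorem for generating functions of $\bN$-valued random variables then delivers the pointwise convergence $\bP\otimes\mu(S_{n,\tau}=k)\to e^{-\tau}\tau^{k}/k!$ for every $k\in\bN$.

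The main technical obstacle, as in Theorem~\ref{thm:gen2}, is to make all the perturbative estimates---the weak-norm bound on $\widetilde{\cl}_{z,n}-\cl_h$, the first-order eigenvalue expansion, and the uniform spectral gap---truly uniform in $z\in[0,1]$ on the unbounded state space $\bR^d$. This should rest on the Lyapunov function furnished by assumption \textbf{AD}, which confines mass to large compacts and underpins the spectral picture used for Theorem~\ref{thm:gen2}; one must check that the ``hole'' $B_n$ is controlled against the full-space weighted norm rather than only against a compact reference region.
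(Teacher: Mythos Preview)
Your proposal is correct and follows essentially the same strategy as the paper: a twisted transfer operator, the Keller--Liverani perturbation machinery to extract the leading eigenvalue asymptotics $1-(1-z)\mu(B_n)+o(\mu(B_n))$ with trivial extremal index, and a continuity theorem to conclude Poisson convergence. The only differences are cosmetic: the paper works with the characteristic function and the twist $\mathcal{L}_{t,n}f=e^{is\mathbf{1}_{B_n}}\mathcal{L}_t f$ (multiplier \emph{after} $\mathcal{L}_t$, forcing complex-valued $BV_2$), whereas you use the probability generating function with real $z\in[0,1]$ and place the multiplier \emph{before} $\mathcal{L}_h$; your choice has the mild advantage that the discontinuous factor $f_{z,n}$ is immediately smoothed by $\mathcal{L}_h$, so the Lasota--Yorke inequality follows directly from Lemma~\ref{lem:Rdregularization} without the oscillation-on-annulus computation the paper carries out in item (R1$'$) of Section~\ref{sec:Poisson}. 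One small imprecision: your weak-norm bound on $\widetilde{\mathcal{L}}_{z,n}-\mathcal{L}_h$ is of order $\mathrm{Leb}(B_n)$ rather than $\mu(B_n)$ (cf.\ Proposition~\ref{prop:R5}), but since $f_0$ is locally bounded away from zero (Lemma~\ref{lem:Lemma3_posdensity}) the two are comparable and nothing changes.
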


As mentioned before, the proofs of Theorem \ref{thm:gen2} and Theorem  \ref{thm:Poisson} rely on the spectral perturbation  results of  \cite{KL09, Keller12}.
We begin by defining the transfer operator $\cl_h$ associated to the evolution of the system for a fixed time $h>0$.
We find Banach spaces where $\cl_h$ satisfies a Lasota-Yorke type inequality which in turns, if the spaces also embed compactly, implies a spectral gap. A key inequality will be obtained by exploiting the regularizing effect of the noise in the stochastic differential equation and the presence of the dissipative assumption (AD). 
Since the ambient space is $\mathbb{R}^d$ and therefore non compact  and we use singular perturbations generating discontinuous densities,  we introduce $BV_{\alpha}$ as the spaces of bounded variation densities decaying at infinity with a certain power law of exponent $\alpha$.
We remark that the use of bounded variation spaces, rather then the more regular Sobolev Space, has already been very fruitfull in the study of SDE (see \cite{fuku2000, AMMP2010} to begin with). 

Once collected all the necessary estimates, the proofs of Theorem \ref{thm:gen2} and Theorem \ref{thm:Poisson} are spelled out respectively in Section
\ref{app:repfo2} and Section \ref{sec:Poisson}.

\subsection{Organization of the paper} 
The plan of the paper is as follows:
in Section \ref{S3} we set up the functional analytic framework necessary to study our problems and we perform the regularization estimates.

 Section \ref{app:repfo} recalls the main abstract tool we use, namely the asymptotic behavior of the largest perturbed eigenvalue stated in  Proposition \ref{thm:repfo}.
 Section \ref{app:repfo2} shows how from Proposition \ref{thm:repfo} we can recover the extreme event laws  stated in Theorem \ref{thm:gen2}.

Section \ref{sec:Poisson} shows how from Proposition \ref{thm:repfo} we can recover the Poisson statistics  stated in Theorem \ref{thm:Poisson}.

Appendix \ref{A3} contains remarks on the choice of the time discretization step $h$ and how, sending $ h \to 0$, does not give further information on the EVL. 
 Appendix \ref{A4} contains an example of a different system and a different asymptotic limit (with respect to our main theorem) which shows some heuristic about the effect of local fluctuations on the limit law. 
 
\section{Transfer operators, Banach spaces and regularization lemmas.} \label{S3}
We first define and study the basic properties of the transfer operators associated to our system in sections \ref{sec:operators} and \ref{sec:PO}. 

In section \ref{sec:spacesRD}, we introduce a weighted $L^1$ space and bounded variation spaces which we will use to carry out the necessary estimates.

In sections \ref{sec:spacerdtotd} and \ref{rpo} we study the  regularization properties of the transfer operators when applied to these suitable functional spaces.

 Recall the SDE  \eqref{eq:system1} on  the euclidean space $\mathbb{R}^d$, assuming the regularity assumption $(H)$ and the dissipative assumption $(AD)$. 
For this kind of stochastic differential equations it is known (see
\cite{Friedman64}, \cite{SV}) that the equation has the properties
of strong existence of the solutions and pathwise uniqueness.

Moreover,  Menozzi, Pesce and Zhang \cite{MPZ} prove bounds on the transition probabilities for these systems (so called Aronson type estimate) in this setup. Such estimates imply that the transfer operator associated to the system  has a regular kernel, and hence regularizing properties.
Let $\theta _{t}$, for $t\geq 0$ be the flow solving 
\begin{equation*}
\left\{ 
\begin{array}{l}
\dot{\theta}_{t}(x)=b(\theta_{t}(x)) \\ 
\theta _{0}(x)=x.%
\end{array}%
\right. 
\end{equation*}
for  a function $b$ of the system considered.  Let $\lambda \in (0,1], t>0$ and $g_{\lambda}$ be the Gaussian
distribution 
\begin{equation*}
g_{\lambda}(t,x):=t^{-\frac{d}{2}}e^{\frac{-\lambda |x|^{2}}{t}}.
\end{equation*}

\begin{theorem}[\protect\cite{MPZ}, Theorem 1.2 and Remark 1.3.]\label{MPZ}
Let us fix  $T>0$. For each $0<t\leq T$ and  $x\in \mathbb{R}^{d}$\ let  $X_{t}(x)$ be the unique  solution of \eqref{eq:system1} starting from $x$ at time $t $. \ Then   $X_{t}(x) $ has a density  for each $y\in \mathbb{R}^{d}$ that can be expressed as a function $
S_t(x, y)$  which is continuous in both variables. Moreover $S_t$ satisfies the following:

\begin{description}
\item[1] (Two sided density bounds) There exist $\lambda _{0}\in
(0,1],C_{0}\geq 1$ depending on $T,k,d$ such that for any $x,y\in \mathbb{R}%
^{d},~t<T$%
\begin{equation*}
C_{0}^{-1}g_{\lambda _{0}^{-1}}(t,\theta _{t}(x)-y)\leq S_t(x,y)\leq
C_{0}g_{\lambda _{0}}(t,\theta _{t}(x)-y).
\end{equation*}

\item[2] (Gradient estimates) There exist $\lambda _{1}\in (0,1], C_{1}\geq 1$ depending on $T,k,d$ such that for any $x,y\in \mathbb{R}^{d},~t<T$%
\begin{equation*}
|\nabla _{x}S_t(x,y)|\leq C_{1}t^{-\frac{1}{2}}g_{\lambda _{1}}(t,\theta
_{t}(x)-y),
\end{equation*}
\begin{equation*}
|\nabla _{y}S_t(x,y)|\leq C_{1}t^{-\frac{1}{2}}g_{\lambda _{1}}(t,\theta
_{t}(x)-y).
\end{equation*}
\end{description}
\end{theorem}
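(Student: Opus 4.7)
The plan is to prove this Aronson-type theorem in three stages — existence of a continuous density, two-sided Gaussian bounds, and gradient estimates — exploiting the fact that the diffusion coefficient is constant equal to the identity, so the problem essentially reduces to controlling how the Lipschitz drift perturbs the Brownian heat kernel.

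First I would establish existence and joint continuity of $S_t(x,y)$ via Girsanov's theorem. Under \textbf{H} (with \textbf{AD} providing global well-posedness and moment bounds), Novikov's condition holds, so one can change measure to make $X_t^x - x$ a Brownian motion under the new law. The density of $X_t^x$ then has the representation $g(t,y-x)\cdot \mathbb{E}^*[Z_t\mid X_t^x=y]$, where $Z_t$ is the Girsanov exponential and $g(t,\cdot)$ is the standard Gaussian density. Joint continuity in $(x,y)$ follows from the strong Feller property of the semigroup together with convolution smoothing in $y$ by the Brownian heat kernel.

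Next I would prove the two-sided bounds by the parametrix method. Freeze the drift along the deterministic flow $\theta_\cdot(x)$ to obtain a Gaussian comparison kernel $\tilde S_t$ centered at $\theta_t(x)$ with covariance $tI$. The Duhamel identity
\[
S_t(x,y) = \tilde S_t(x,y) + \int_0^t\!\!\int_{\mathbb{R}^d} S_{t-s}(x,z)\bigl(b(z)-b(\theta_s(x))\bigr)\cdot \nabla_z \tilde S_s(z,y)\, dz\, ds
\]
is iterated to produce a parametrix series. Since $|b(z)-b(\theta_s(x))|\leq K|z-\theta_s(x)|$ by \textbf{H}, this factor is absorbed against the Gaussian gradient $\nabla_z \tilde S_s$ at the cost of an integrable time singularity, and the standard convolution-of-Gaussians bookkeeping delivers the upper bound with a Gaussian exponent $g_{\lambda_0}$. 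For the lower bound I would use an Aronson-style chaining argument: split $[0,t]$ into many short subintervals and apply on each a local Gaussian lower bound — immediate for short times from the Girsanov representation — along a chain of intermediate points joining $\theta_t(x)$ to $y$.

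Finally, for the gradient estimates I would invoke the Bismut–Elworthy–Li formula: for a test function $\varphi$,
\[
\nabla_x \mathbb{E}[\varphi(X_t^x)] \;=\; \frac{1}{t}\, \mathbb{E}\!\left[\varphi(X_t^x)\int_0^t (D_x X_s^x)^{\top}\, dW_s\right].
\]
Under \textbf{H} the tangent flow $D_x X_s^x$ satisfies a linear ODE with coefficient bounded by $K$, so $\|D_x X_s^x\|\leq e^{Ks}$. Taking $\varphi$ to approximate $\delta_y$ and combining Cauchy–Schwarz with the already proved upper bound yields $|\nabla_x S_t(x,y)|\leq C_1 t^{-1/2} g_{\lambda_1}(t,\theta_t(x)-y)$; the $y$-gradient bound follows either by integration by parts in $y$ or by differentiating the parametrix series termwise. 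The main obstacle is the lower bound in step two: the upper half of the parametrix argument is nearly routine once the Gaussian comparison kernel is set up, but the matching lower bound requires careful control of the chaining path and of how the short-time estimates aggregate. A clean alternative worth keeping in reserve is Zvonkin's drift-removal transformation, which converts the SDE to pure Brownian motion by a change of variable and lets the two-sided bounds be read off directly from the Gaussian heat kernel.
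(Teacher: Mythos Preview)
The paper does not prove this theorem at all: it is quoted verbatim from Menozzi--Pesce--Zhang \cite{MPZ} (Theorem 1.2 and Remark 1.3) and used as a black box throughout. There is therefore no ``paper's own proof'' to compare your proposal against; the authors simply invoke the result to control the kernel $S_t$ in later sections.

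Your sketch is a reasonable outline of how such Aronson-type estimates are typically obtained in the constant-diffusion, Lipschitz-drift regime, and indeed the parametrix expansion around the frozen flow $\theta_t(x)$ is the method used in \cite{MPZ} (in far greater generality). That said, two cautions if you intend this as a standalone proof. First, the Girsanov route in your first paragraph does not by itself give joint continuity of $S_t(x,y)$ in both variables without additional work on the conditional expectation $\mathbb{E}^*[Z_t\mid X_t^x=y]$; this is usually extracted from the parametrix series rather than from Girsanov. Second, the Bismut--Elworthy--Li argument you sketch yields a gradient bound on $\nabla_x P_t\varphi$ for test functions, but passing from that to a pointwise bound on $|\nabla_x S_t(x,y)|$ of the form $C_1 t^{-1/2} g_{\lambda_1}(t,\theta_t(x)-y)$ centered at the flow requires either differentiating the parametrix series directly (which is what \cite{MPZ} do) or a more delicate localization than ``taking $\varphi$ to approximate $\delta_y$''. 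The Zvonkin alternative you mention is attractive in dimension one but is considerably more involved in $\mathbb{R}^d$ and would not obviously produce bounds centered at $\theta_t(x)$.
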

\subsection{The Kolmogorov operator and the  transfer  operator}
\label{sec:operators}
In this section we define the transfer operators associated to the evolution of a SDE  and show some of the basic properties of these operators. The properties of the transition probabilities $S_t$ inherited by \cite{MPZ} 
will be used  to define a Kolmogorov  operator (composition operator) associated to our system.
\begin{definition} \label{def:koldef}
The Kolmogorov operator $P_{t}:L^\infty(\mathbb{R}^d)\to C^0(\mathbb{R}^d)$ associated to the system \eqref{eq:system1} is defined  as follows. Let $\phi \in L^{\infty}(\mathbb{R}^d)$ then $\forall x \in \mathbb{R}^d $
\begin{equation*}
(P_{t}\phi )(x):=\mathbb{E}[\phi (X_{t}(x))].
\end{equation*}
\end{definition}
In the literature this is also known as  stochastic Koopman operator. Given the transition probabilities $S_t$ it holds 
\begin{equation*}
(P_{t}\phi )(x)=\int_{\bR^d} \phi (y)S_t(x,y)dy.
\end{equation*}
By this we see that 
\begin{equation}\label{inft}
\|P_{t}\phi\|_\infty\leq \|\phi \|_\infty.
\end{equation}

Now we define the transfer operator $\mathcal{L}_t:L^1(\mathbb{R}^d)\to L^1(\mathbb{R}^d)$ by duality. 
If $\nu $ is a Borel signed measure on $\mathbb{R}^{d}$
\begin{equation*}
\int_{\bR^d} (P_{t}\phi )(x)d\nu (x)=\int_{\bR^d} \int_{\bR^d} \phi (y)S_t(x,y)dyd\nu (x)
\end{equation*}%
supposing that  $\nu $ has a density with respect to the Lebesgue
measure 
$f\in L^1(\mathbb{R}^d)$ i.e $ d\nu = f(x) dx $
we can thus write 
\begin{equation}\label{duality}
\int_{\bR^d} (P_{t}\phi )(x)d\nu (x) = \int_{\bR^d} \phi (y) \left( \int_{\bR^d} S_t(x,y)f(x)dx \right) dy.
\end{equation}
We can then define the transfer operator associated to the system and to the
evolution time $t$ as
\begin{definition}[transfer operator] Given $f\in L^1(\mathbb{R}^d)$ we define the measurable function $\cl_{t}f:\mathbb{R}^d\to \mathbb{R}^d$ as follows. For almost each $y\in \mathbb{R}^d$ let 
\begin{equation}\label{eq:stoctransfer}
(\cl_{t}f)(y):=\int S_t(x,y)f(x)dx .
\end{equation}

\end{definition}
By \eqref{duality}  we recover the duality relation between the Kolmogorov and the transfer operator whenever $d\nu = f(x) dx $ i.e.
\begin{equation}\label{eq:stocduality}
\int (P_{t}\phi )(x)d\nu (x)=\int \phi (y)(\cl_{t}f)(y)dy.
\end{equation}

\begin{lemma}The operator $\mathcal{L}_t$ preserves the integral and is a weak contraction with respect to the $L^1$ norm.\label{7}
\end{lemma}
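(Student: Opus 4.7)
The plan is to establish both claims directly from the defining formula \eqref{eq:stoctransfer} together with the fact that $S_{t}(x,\cdot)$ is, for each fixed $x$, a probability density on $\mathbb{R}^{d}$ (equivalently $P_{t}\mathbf{1}=\mathbf{1}$, which is immediate from Definition~\ref{def:koldef} since $\mathbb{E}[1]=1$). Both properties are classical facts about Markov transfer operators; the only thing to check is that no subtlety in the $\mathbb{R}^{d}$ setting disturbs the standard argument.

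For integral preservation, I would apply Fubini to
\[
\int_{\mathbb{R}^{d}}(\mathcal{L}_{t}f)(y)\,dy=\int_{\mathbb{R}^{d}}\int_{\mathbb{R}^{d}}S_{t}(x,y)f(x)\,dx\,dy,
\]
justified because $S_{t}\geq 0$ by the lower bound in Theorem~\ref{MPZ}(1), and $f\in L^{1}$. Swapping the order and using $\int_{\mathbb{R}^{d}}S_{t}(x,y)\,dy=1$ (i.e.\ $P_{t}\mathbf{1}=\mathbf{1}$) gives $\int\mathcal{L}_{t}f\,dy=\int f\,dx$. Alternatively this is just the duality \eqref{eq:stocduality} applied to $\phi\equiv 1$.

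For the $L^{1}$ contraction, I would decompose $f=f^{+}-f^{-}$ and use positivity of the kernel: since $S_{t}\geq 0$, both $\mathcal{L}_{t}f^{+}$ and $\mathcal{L}_{t}f^{-}$ are nonnegative a.e., so
\[
|\mathcal{L}_{t}f|\leq \mathcal{L}_{t}f^{+}+\mathcal{L}_{t}f^{-}=\mathcal{L}_{t}|f|\quad\text{a.e.}
\]
Integrating and using the integral preservation just proved, $\|\mathcal{L}_{t}f\|_{1}\leq \int\mathcal{L}_{t}|f|\,dy=\||f|\|_{1}=\|f\|_{1}$. A cleaner alternative is to dualize: for any $\phi\in L^{\infty}$ with $\|\phi\|_{\infty}\leq 1$, \eqref{eq:stocduality} together with \eqref{inft} gives
\[
\Bigl|\int \phi\,(\mathcal{L}_{t}f)\,dy\Bigr|=\Bigl|\int (P_{t}\phi)\,f\,dx\Bigr|\leq \|P_{t}\phi\|_{\infty}\|f\|_{1}\leq \|f\|_{1},
\]
and taking the supremum over such $\phi$ yields $\|\mathcal{L}_{t}f\|_{1}\leq\|f\|_{1}$.

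There is no real obstacle here; the only mild point is making sure $\mathcal{L}_{t}f$ is well-defined in $L^{1}$ in the first place, which follows from the same Fubini computation applied to $|f|$, showing that the integral defining $(\mathcal{L}_{t}f)(y)$ is absolutely convergent for a.e.\ $y$. After that both statements fall out in one line each.
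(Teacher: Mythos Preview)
Your proposal is correct and matches the paper's approach: the paper's one-line proof simply invokes the duality \eqref{eq:stocduality} together with \eqref{inft} (with $\phi=1$ for integral preservation, and the general bound for the contraction), which is exactly your ``cleaner alternative'' argument. Your direct Fubini/positive--negative decomposition is a valid and equivalent route, just slightly more explicit than what the paper writes.
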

\begin{proof}
The statements follow directly from \eqref{inft} and 
\eqref{eq:stocduality}  by setting $\phi=1$.
\end{proof}
Since clearly $\mathcal{L}_t$ is a positive operator, we also get that $\mathcal{L}_t$ is a Markov operator having kernel $S_t$.
\begin{remark}(\text{On the stationary measure}).

In the following we will define suitable spaces where the operators $\mathcal{L}_t$ have nice spectral properties, allowing the study of their leading eigenvalues. As a byproduct of this analysis we will give a proof of the existence and uniqueness of the stationary measure $\mu.$ We remind that the stationary measure satisfies
$$
\int \phi(x) d\mu(x)=\int (P_{t}\phi )(x)d\mu (x).
$$
Whenever it is absolutely continuous, its density $f_0$ is the fixed point of the transfer operator: $\mathcal{L}_tf_0=f_0.$ The existence and uniqueness of such a fixed point will  follow directly from the proof of {\bf Assumption R1} in section \ref{sec:estimatesper}.
\end{remark}
\subsection{Perturbed operators}
\label{sec:PO}
To prove the extreme event laws shown in Theorem \ref{thm:gen2}  
we will use the construction outlined in \cite{KL09, Keller12} 
(see Section \ref{app:repfo}) adapted to our case.

This is based on the idea of considering the target set $B_n$ as a hole in the phase space and consider it  as an open system.
Then we will study it by means of the related transfer operators and their properties.

\begin{definition}
Let $ t >0$, $ x\in \mathbb{R}^d $ 
and  $u_n \to 0$ be a real sequence.  We denote by $B_{n}$ the ball $B\left(  x,\exp\left(  -u_{n}\right)  \right)$.
We define the ``perturbed"  versions of the Kolmogorov and transfer operators by setting
\begin{equation} \label{def:twistedoperator}
\begin{split} (P_{t,n} \phi)(x):=\mathbb{E}[1_{B^c_n}(X^x_t)\phi(X^x_t)], \ \phi\in L^\infty(\bR^d)  \\
(\cl_{t,n}f)(x):=1_{B^c_n}(x)(\mathcal{L}_t f)(x), \ f\in L^1(\bR^d).
\end{split}
\end{equation}
\end{definition}

Analogously to what is done in Section \ref{sec:operators} one can prove that the perturbed operators  enjoy the following  duality relation:  
\begin{equation} \label{mod} 
\begin{split}
\int (P_{t,n}\phi)(x) f(x) dx \hspace{-3pt}  & =
\iint \phi(y) 1_{B_n^c}(y)S_t(x,y) f(x) dydx \\
& =\int \phi(y) 1_{B_n^c}(y) \int S_t(x,y) f(x) dx dy \\
&= \int \phi(y)\left[1_{B_n^c}(y)(\mathcal{L}_tf)(y)\right]dy=\int \phi(y)(\cl_{t,n} f)(y)dy.
\end{split}
\end{equation}

 The iterates of the perturbed operator inherit their properties from the following lemma.  
\begin{lemma}\label{ror}
For every $t,s\geq0$,  $\phi \in L^\infty\left( \bR^d \right)$   
\[
P_{t,n}\left(  P_{s,n}\left(
\phi\right)  \right)  \left(  x\right)  =\mathbb{E}\left[  1_{B_{n}^{c}%
}\left(  X_{t}^{x}\right)  1_{B_{n}^{c}}\left(  X_{t+s}^{x}\right)
\phi\left(  X_{t+s}^{x}\right)  \right].
\]
\end{lemma}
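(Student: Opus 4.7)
The plan is to unpack the two definitions and then invoke the Markov (semigroup) property of the SDE~\eqref{eq:system1}. First I would compute the inner function: by Definition \ref{def:twistedoperator}, for any $y\in\bR^d$,
\[
(P_{s,n}\phi)(y)=\mathbb{E}[1_{B_n^c}(X_s^y)\phi(X_s^y)].
\]
Since the right-hand side is a bounded measurable function of $y$ (boundedness follows since $\phi\in L^\infty$ and $1_{B_n^c}\le 1$; measurability follows from the kernel representation coming from Theorem \ref{MPZ}), I can legally feed $P_{s,n}\phi$ into $P_{t,n}$. Applying Definition \ref{def:twistedoperator} once more,
\[
P_{t,n}(P_{s,n}\phi)(x)=\mathbb{E}\!\left[1_{B_n^c}(X_t^x)\,(P_{s,n}\phi)(X_t^x)\right]
=\mathbb{E}\!\left[1_{B_n^c}(X_t^x)\,\mathbb{E}[1_{B_n^c}(X_s^y)\phi(X_s^y)]\big|_{y=X_t^x}\right].
\]

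The key step is to convert the inner expectation (evaluated at the random point $y=X_t^x$) into a conditional expectation along the original trajectory. Strong well-posedness of \eqref{eq:system1}, which holds under assumption \textbf{H} (see \cite{SV}), gives a time-homogeneous Markov family $\{X_t^x\}$. Consequently, if $\mathcal{F}_t$ denotes the natural filtration of the Brownian motion $W$, the Markov property yields
\[
\mathbb{E}[F(X_s^y)]\big|_{y=X_t^x}=\mathbb{E}[F(X_{t+s}^x)\mid \mathcal{F}_t]
\]
for any bounded measurable $F$, applied here to $F(z)=1_{B_n^c}(z)\phi(z)$. Substituting and using that $1_{B_n^c}(X_t^x)$ is $\mathcal{F}_t$-measurable, the tower property gives
\[
\mathbb{E}\!\left[1_{B_n^c}(X_t^x)\,\mathbb{E}[1_{B_n^c}(X_{t+s}^x)\phi(X_{t+s}^x)\mid\mathcal{F}_t]\right]
=\mathbb{E}\!\left[1_{B_n^c}(X_t^x)\,1_{B_n^c}(X_{t+s}^x)\,\phi(X_{t+s}^x)\right],
\]
which is the claimed identity.

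The only non-routine point, and thus the main thing to justify carefully, is the Markov/time-homogeneity step: one needs the regular conditional law of $X_{t+s}^x$ given $\mathcal{F}_t$ to coincide with the law of $X_s^y$ at $y=X_t^x$. This is standard for the strongly well-posed SDE under hypothesis \textbf{H}, since pathwise uniqueness transfers to uniqueness in law and to the flow property $X_{t+s}^x=X_s^{X_t^x}$ (using time-homogeneity and the shifted Brownian motion $\widetilde W_u:=W_{t+u}-W_t$, independent of $\mathcal{F}_t$). Once this is in place, the rest of the argument is a one-line application of Fubini/tower, and the lemma follows. An immediate iteration gives, for any $n\ge1$ and times $0\le t_0<t_1<\dots<t_n$, the analogous formula
\[
P_{t_1-t_0,n}\circ\cdots\circ P_{t_n-t_{n-1},n}(\phi)(x)=\mathbb{E}\!\left[\Big(\prod_{k=1}^{n-1}1_{B_n^c}(X_{t_k}^x)\Big)\phi(X_{t_n}^x)\right],
\]
which will be the form used later to extract the EVL and Poisson statistics.
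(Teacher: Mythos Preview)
Your proof is correct and follows essentially the same route as the paper: unfold the two applications of $P_{\cdot,n}$, invoke the Markov property to replace $\mathbb{E}[F(X_s^y)]|_{y=X_t^x}$ by $\mathbb{E}[F(X_{t+s}^x)\mid\mathcal{F}_t]$, then use $\mathcal{F}_t$-measurability of $1_{B_n^c}(X_t^x)$ and the tower property. One small slip in your closing iterated formula (which is not part of the lemma itself): the product of indicators should run up to $k=n$, not $k=n-1$, since each application of $P_{\cdot,n}$ contributes its own indicator, including the last one at time $t_n$.
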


\begin{proof}%
\begin{align*}
P_{t,n}\left(  P_{s,n}\left(
\phi\right)  \right)  \left(  x\right)   &  =\mathbb{E}\left[  1_{B_{n}^{c}%
}\left(  X_{t}^{x}\right)  P_{s,n}  \left(  \phi\right)
\left(  X_{t}^{x}\right)  \right] \\
&  =\mathbb{E}\left[  1_{B_{n}^{c}}\left(  X_{t}^{x}\right)  \mathbb{E}\left[
1_{B_{n}^{c}}\left(  X_{s}^{y}\right)  \phi\left(  X_{s}^{y}\right)  \right]
_{y=X_{t}^{x}}\right] \\
&  =\mathbb{E}\left[  1_{B_{n}^{c}}\left(  X_{t}^{x}\right)  \mathbb{E}\left[
1_{B_{n}^{c}}\left(  X_{t+s}^{x}\right)  \phi\left(  X_{t+s}^{x}\right)
|\mathcal{F}_{t}\right]  \right]
\end{align*}
(by Markov property and where $\mathcal{F}_{t}$ is the filtration adapted to $W_t$)%
\begin{align*}
&  =\mathbb{E}\left[  \mathbb{E}\left[  1_{B_{n}^{c}}\left(  X_{t}^{x}\right)
1_{B_{n}^{c}}\left(  X_{t+s}^{x}\right)  \phi\left(  X_{t+s}^{x}\right)
|\mathcal{F}_{t}\right]  \right] \\
&  =\mathbb{E}\left[  1_{B_{n}^{c}}\left(  X_{t}^{x}\right)  1_{B_{n}^{c}%
}\left(  X_{t+s}^{x}\right)  \phi\left(  X_{t+s}^{x}\right)  \right]
\end{align*}
by the basic properties of the conditional expectation (see \cite[Section 9.7]{Williams}).
\end{proof}

\begin{remark}
In particular, given generic $s,t \in \mathbb{R}^+$
\[
P_{t+s,n}\left(  \phi\right)  \left(  x\right)  \neq
P_{t,n}\left(  P_{s,n}\left(
\phi\right)  \right)  \left(  x\right)
\]
namely $P_{t,n}$ is not a semigroup. However notice that if $t_{1}<t_{2}$, then
(we take $t=t_{1}$ and $t+s=t_{2}$ above)%
\[
P_{t_{1},n}\left(  P_{t_{2}-t_{1},n}
\left(  \phi\right)  \right)  \left(  x\right)  =\mathbb{E}\left[
1_{B_{n}^{c}}\left(  X_{t_{1}}^{x}\right)  1_{B_{n}^{c}}\left(  X_{t_{2}}%
^{x}\right)  \phi\left(  X_{t_{2}}^{x}\right)  \right]  .
\]
\end{remark}

Thus, for  $P_t^{(n)}$ we have the following
\begin{corollary} \label{cor:EtoTwistedP}
For every $x\in \bR^d$, $0=t_{0}<t_{1}<...<t_{n}$, $\phi\in
L^\infty \left( \bR^d \right)  $, we have
\[
\left(  P_{t_{0},n}\circ P_{t_{1}-t_{0},n}\circ\cdot\cdot\cdot\circ P_{t_{n}-t_{n-1},n}\right)
\left(  \phi\right)  \left(  x\right)  =\mathbb{E}\left[  1_{B_{n}^{c}}\left(
X_{t_{0}}^{x}\right)  \cdot\cdot\cdot1_{B_{n}^{c}}\left(  X_{t_{n}}%
^{x}\right)  \phi\left(  X_{t_{n}}^{x}\right)  \right].
\]
\end{corollary}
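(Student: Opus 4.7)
The plan is to prove this by induction on $n$, with Lemma \ref{ror} providing both the template and the essential Markov-property argument that gets reused at the induction step.

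For the base case $n=1$, since $t_0=0$ we have $X_{t_0}^x=x$, so $P_{0,n}(\psi)(x)=\mathbb{E}[1_{B_n^c}(x)\psi(x)]=1_{B_n^c}(x)\psi(x)$, and multiplying by the expression given by the definition of $P_{t_1,n}$ immediately yields the claim. Alternatively one can simply invoke Lemma \ref{ror} with $t=0$ and $s=t_1$.

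For the induction step, assume the corollary for $n-1$. Set $\psi:=P_{t_n-t_{n-1},n}(\phi)$ so that the composition of the $n+1$ operators equals $(P_{t_0,n}\circ P_{t_1-t_0,n}\circ\cdots\circ P_{t_{n-1}-t_{n-2},n})(\psi)(x)$. Applying the inductive hypothesis with $\psi$ in place of $\phi$ gives
\[
\mathbb{E}\left[1_{B_n^c}(X_{t_0}^x)\cdots 1_{B_n^c}(X_{t_{n-1}}^x)\,\psi(X_{t_{n-1}}^x)\right].
\]
Now, by definition, $\psi(y)=\mathbb{E}[1_{B_n^c}(X_{t_n-t_{n-1}}^y)\phi(X_{t_n-t_{n-1}}^y)]$, and the Markov property (exactly as used in the proof of Lemma \ref{ror}) yields
\[
\psi(X_{t_{n-1}}^x)=\mathbb{E}\left[1_{B_n^c}(X_{t_n}^x)\phi(X_{t_n}^x)\,\big|\,\mathcal{F}_{t_{n-1}}\right].
\]
Since each factor $1_{B_n^c}(X_{t_k}^x)$ for $k\le n-1$ is $\mathcal{F}_{t_{n-1}}$-measurable, pulling them inside the conditional expectation and invoking the tower property collapses the double expectation to the desired
\[
\mathbb{E}\left[1_{B_n^c}(X_{t_0}^x)\cdots 1_{B_n^c}(X_{t_n}^x)\,\phi(X_{t_n}^x)\right].
\]

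There is no real obstacle here: the content is essentially an iteration of Lemma \ref{ror}. The only point that needs care is the bookkeeping of the time shifts so that the Markov-property step (where one conditions on $\mathcal{F}_{t_{n-1}}$ and identifies $X_{t_n-t_{n-1}}^{y}\big|_{y=X_{t_{n-1}}^x}$ with $X_{t_n}^x$) is applied to the correct pair of consecutive times. Everything else is formal composition and measurability.
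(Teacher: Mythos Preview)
Your proof is correct and is exactly the argument the paper has in mind: the corollary is stated without proof as an immediate consequence of Lemma \ref{ror}, and your induction is the natural way to spell out that iteration. The only content is the repeated use of the Markov property and the tower property, which you handle correctly.
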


{\bf Notations} For general properties of the unperturbed and perturbed transfer operators, we will use respectively  the notations $\cl_t$ and $\cl_{t,n}$ and we will switch from $t$ to $h$ when needed. We will denote indifferently with $L^p$ of $L^p(\mathbb{R}^n), p\ge 1$ the space of complex-valued measurable  functions which are $p$-summable with respect to the Lebesgue measure on $\mathbb{R}^n.$
 
\subsection{Functional Spaces: quasi-H\"older space  on $\bR^d$ } 

\label{sec:spacesRD}

We now define suitable functional spaces on which the transfer operators introduced in the previous sections have a regularizing behavior and nice spectral properties.
We construct  spaces which are suitable for our non-compact environment, imposing a controlled behavior far away from the origin  by using  weight functions growing at infinity.
These spaces will be denoted as  $BV_{\alpha}$ and $ L^1_{\alpha} $.
Let $ \alpha > 0 $ and define the weight function
\begin{equation}\label{WF}
\rho_{\alpha} \left(  \left\vert x\right\vert \right)  =\left(  1+\left\vert
x\right\vert ^{2}\right)  ^{\alpha/2}.
\end{equation}
Let $L_{\alpha}^{1}\left(  \mathbb{R}^{d}\right)  $ be the 
space of Lebesgue measurable $f:\mathbb{R}^{d}\rightarrow\mathbb{R}$ 
such that
\[
\left\Vert f\right\Vert _{L_{\alpha}^{1}\left(  \mathbb{R}^{d}\right)  }%
:=\int_{\mathbb{R}^{d}}\rho_{\alpha} \left(  \left\vert x\right\vert \right)  \left\vert
f\left(  x\right)  \right\vert dx<\infty.
\]
Note that,  $ L^1_{\alpha} \subset L^{1} $ and if $ f \in L^1_{\alpha} $ then $ \| f \|_{L^1} \leq \| f \|_{L^1_{\alpha}} $. Moreover for $\alpha= 0$, $ L^1_0 = L^1$.

We now adapt the Bounded Variation spaces used in \cite{saussol00}, see also \cite{KGE}, to the setup at hand.\footnote{These kind of spaces have also been used already in the context of extreme value theory (see \cite{FGGV18, CFVY19}).} For a Borel subset $S\subseteq \mathbb{R}^{d}$ let us define 
\begin{equation*}
osc(f,S)=
 \operatorname*{ess\,sup}_{x\in S}f
-\operatorname*{ess\,inf}_{x\in S}f.
\end{equation*}
We now define the seminorm:\footnote{In the definition of \cite{saussol00} and \cite{KGE} we have that  $ \epsilon < \epsilon_0$,  for  a suitable $\epsilon_0>0.$ However in our case is not restrictive to choose  $\epsilon_0=1$. By inspecting carefully the proof of Lemma \ref{imm} and Proposition \ref{prop:verR3_Rd} we see that our choice of $\epsilon_0 = 1 $ in the definition of oscillation does not impose any limitation. 
By Lemma \ref{lem:Rdregularization} we see that, beside the discontinuities created by our $1_{B_n}$, the  oscillation of $\mathcal{L}_t f $ can be easily controlled, as $\mathcal{L}_t f $ is regularized to $C^1$. This setup is different from those originally in \cite{saussol00}, where  $\epsilon_0$ was tied to the size of the partitions and the expansions of the considered maps, and had to be chosen accordingly. 
}
\begin{equation*}
\|f\|_{osc(\bR^d)}=\sup_{\epsilon \leq 1 }\epsilon ^{-1}\int_{\bR^d} osc(f,B_{\epsilon }(x)) d\psi(x),
\end{equation*} 

Here the measure $\psi$ is a Radon probability measure on $\mathbb{R}^d$ and we require that:
\begin{itemize}
    \item [($A_{\psi}1$)] $\psi$ is absolutely continuous with respect to the Lebesgue measure, having a continuous bounded density $\psi'$ such that $\psi'>0$  everywhere. 
    \end{itemize}

We can define a $\| \cdot \|_{BV_{\alpha}}$ norm by setting
\begin{equation} \label{def:BValpha}
\left\Vert f\right\Vert _{BV_{\alpha}}:=\left\Vert f\right\Vert _{L_{\alpha
}^{1}\left(  \mathbb{R}^{d}\right)  }+\sup_{\epsilon\in\left(  0,1\right]
}\epsilon^{-1}\int_{\mathbb{R}^{d}}osc\left(  f,B_{\epsilon}\left(  x\right)
\right)  d\psi(x).
\end{equation}
It is not difficult to show that $ \| \cdot \|_{BV_{\alpha}}$ indeed defines a norm and the set of $L^1_{\alpha}$ functions for which this norm is finite is a Banach space which we denote by  $BV_{\alpha}\left(  \mathbb{R}^{d}\right) $.\footnote{ The proof can be obtained by adapting Propositions B.4 and B.5 in B. Saussol PhD thesis  (\cite{STh}). First of all one notices that 
 $L^1_{\alpha}(\mathbb{R}^{d})$ is complete. Then, if  $f_n$ is a Cauchy sequence in  $BV_{\alpha},$ it is also Cauchy in  $L^1_{\alpha}(\mathbb{R}^{d})$ and therefore in $L^1(\psi),$ since $\psi'\in L^{\infty}(\mathbb{R}^{d}).$ Then one finally applies Propositions B.4 and B.5 in Saussol's thesis which explicitly uses $L^1(\psi)$ for the oscillatory part.}

We prove that our weighted bounded variation space is compactly immersed in $L^1.$
\begin{theorem} \label{thm:embeddingRD}
$BV_{\alpha}\left(  \mathbb{R}^{d}\right)  \hookrightarrow L^{1}\left(  \mathbb{R}%
^{d}\right)$ is a compact embedding. 
\end{theorem}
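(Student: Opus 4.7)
My plan is to prove compactness by splitting the argument into tightness at infinity, local compactness on bounded balls, and a diagonal extraction. Let $(f_n)_{n\in\mathbb{N}}$ be a sequence bounded in $BV_\alpha(\mathbb{R}^d)$ with $\|f_n\|_{BV_\alpha} \le M$. Tightness at infinity is immediate from the weighted $L^1$ part of the norm: since $\rho_\alpha(|x|) \ge \rho_\alpha(R)$ whenever $|x|\ge R$,
$$
\int_{|x|>R}|f_n(x)|\,dx \;\le\; \rho_\alpha(R)^{-1}\|f_n\|_{L^1_\alpha} \;\le\; \rho_\alpha(R)^{-1}M,
$$
which tends to $0$ uniformly in $n$ as $R\to\infty$.

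For local compactness, I would fix $R>0$ and show that the restrictions $f_n|_{B_R(0)}$ are uniformly bounded in the classical Saussol-type BV space on $B_R(0)$. The catch is that the oscillation part of $\|\cdot\|_{BV_\alpha}$ is measured against $\psi$ rather than Lebesgue measure, but assumption $(A_\psi 1)$ guarantees that $\psi'$ is continuous and strictly positive, so there exists $c_R>0$ with $\psi' \ge c_R$ on $\overline{B_R(0)}$. Hence for every $\epsilon \in (0,1]$,
$$
c_R\,\epsilon^{-1}\int_{B_R(0)}osc(f_n,B_\epsilon(x))\,dx \;\le\; \epsilon^{-1}\int_{\mathbb{R}^d}osc(f_n,B_\epsilon(x))\,d\psi(x)\;\le\; M.
$$
Together with the trivial bound $\|f_n\|_{L^1(B_R(0))}\le M$, this places the restrictions in a bounded set of the standard Saussol BV space on $B_R(0)$, where oscillation is taken against Lebesgue measure. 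The classical compactness result for this space (Proposition 1.10 of \cite{saussol00}, see also Proposition B.5 of \cite{STh}) then yields a subsequence converging in $L^1(B_R(0))$.

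The final step is a standard diagonal extraction over the radii $R_k=k$, $k\in\mathbb{N}$, producing one subsequence $(f_{n_j})$ and a limit $g\in L^1_{\mathrm{loc}}(\mathbb{R}^d)$ with $f_{n_j}\to g$ in $L^1(B_k)$ for every $k$. Fatou's lemma gives $\|g\|_{L^1_\alpha}\le M$, so the tightness estimate applies to $g$ as well. Given $\delta>0$, I would pick $R$ with $2M\rho_\alpha(R)^{-1}<\delta/2$ and then $j$ large enough that $\|f_{n_j}-g\|_{L^1(B_R)}<\delta/2$; splitting the integral over $B_R$ and its complement then gives $\|f_{n_j}-g\|_{L^1(\mathbb{R}^d)}<\delta$. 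The only delicate point in this approach is the mismatch between the oscillation reference measure $\psi$ and the Lebesgue measure underlying Saussol's compactness theorem, and that mismatch is resolved on each bounded ball by the uniform positivity of the continuous density $\psi'$ provided by $(A_\psi 1)$.
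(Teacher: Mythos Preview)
Your proof is correct and follows essentially the same route as the paper's: uniform tail control from the $L^1_\alpha$ weight, local compactness on balls $B_R(0)$ via the standard Saussol BV compactness, and a diagonal extraction. The paper's version is slightly terser at the local step, simply asserting that on compact domains the $BV_\alpha$ and usual BV topologies coincide, whereas you spell out explicitly how assumption $(A_\psi 1)$ converts the $\psi$-weighted oscillation integral into a Lebesgue-weighted one via the lower bound $\psi' \ge c_R$ on $\overline{B_R(0)}$; this is exactly the content behind the paper's assertion.
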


\begin{proof}
We prove that given a sequence $g_{n}\in L^{1}$ such that $%
\|g_{n}\|_{BV_{\alpha }}\leq M$ for some $M$ there is a subsequence $%
g_{n_{k}}$ and $g\in L^{1}$ such that $\|g_{n_{k}}-g\|_{L^{1}}\rightarrow 0$.

Let us consider a sequence $B_{m}=B_m(0)$ of balls centered in the origin with radius $m$, eventually covering $\mathbb{R}^{d}$. Let us fix $m$. By the
fact that on a compact domain the usual $BV$ topology is equivalent to $BV_{\alpha}$ and the space $BV(B_{m})$  has a compact immersion
in $L^{1}(B_{m})$ there is a subsequence $g_{n_{m,k}}$ and a function $
f_{m}:B_{m}\rightarrow \mathbb{R}$ such that $g_{n_{m,k}}$ restricted to $B_{m}$ converges to $f_{m}$ \ in the $L^{1}$ topology.

Let us define the extension $\overline{f_{m}}$ of $f_{m}$ to $\mathbb{R}^{d}$ by 
\begin{equation*}
\overline{f_{m}}(x)=\left\{ 
\begin{array}{c}
f_{m}(x)~if~x\in B_{m} \\ 
0~if~x\notin B_{m}.%
\end{array}%
\right. 
\end{equation*}
Since $\|g_{n_{m,k}}\|_{L^{1}}\leq M$ \ we also have $\|\overline{f_{m}}(x)\|_{L^{1}}\leq M$. 
Once found $g_{n_{m,k}}$ and $f_m$, we then consider $B_{m+1}$ and from the sequence $g_{n_{m,k}}$ let us draw a subsequence $g_{n_{m+1,k}}$ converging on $B_{m+1}$ to some $f_{m+1}$. Being a subsequence of the previously extracted sequence, $g_{n_{m+1,k}}$ will converge to $f_m$ on $B_m$ and then $f_m=f_{m+1}$ on $B_m$.
We can then continue inductively and define for each $m\geq 0$ a subsequence $g_{n_{m,k}}$ and a function $f_m$ with $g_{n_{m,k}}\to f_m$ on $B_m$. Furthermore we will also have an extension $ \overline{f_{m}}  $ on $\mathbb{R}^{d}$ for each $m\geq0$.
Thus the sequence $m\to \overline{f_{m}}$ \
converges pointwisely to some function $f:\mathbb{R}^{d}\rightarrow \mathbb{R%
}.$ The sequence $\left\vert \overline{f_{m}}(x)\right\vert $ \
 is an increasing sequence and then by the monotone convergence theorem 
$f\in L^{1}$ and $||f||_{L^{1}}\leq M$.

Now for each $m$ \ consider $k_{m}$ such that 
\begin{equation*}
\int_{B_{m}}|g_{n_{m,k_{m}}}-f_{m}|dx\leq \frac{1}{m}.
\end{equation*}

Since $\|g_{n_{m,k_{m}}}\|_{ L_{\alpha }^{1}}$ is uniformly bounded we have that there is some $M_2>0$  independent of $m$ 
 such that $\int_{\mathbb{R}%
^{d} \setminus B_{m}}|g_{n_{m,k_{m}}}|dm\leq \frac{M_2}{\rho_{\alpha}(m)}.$
We have that 
\begin{equation*}
||g_{n_{m,k_{m}}}-\overline{f_{m}}||_{L^{1}}\leq \frac{1}{m}+\frac{M_2}{\rho_{\alpha}(m)}
\end{equation*}%
and thus $g_{n_{m,k_{m}}}\rightarrow f$ in the $L^{1}$
topology.
\end{proof}

Let us suppose now  that $f\in BV_{\alpha}$ and let $\mathcal{K}$ be a compact set in $\mathbb{R}^d.$ We will need later on an  estimate of the $L^{\infty}(\psi)$ norm of $f$ on $\mathcal{K}$, designated as $||f||_{L^{\infty}(\psi, \mathcal{K})}.$ 
\begin{proposition}\label{imm} 
For any compact set $\mathcal{K}\subset \mathbb{R}^d$ we have
 
\begin{equation}\label{in}
\|f\|_{L^{\infty}(\psi, \mathcal{K})}\le \frac{\max\left(||\psi'||_{L^{\infty}(\mathbb{R}^d)},1\right)}{d_{\mathcal{K}}}||f||_{BV_{\alpha}},
\end{equation}
where again $\psi'$ denotes the density of $\psi$ with respect to the Lebesgue measure and  $d_{\mathcal{K}}=\text{essinf}_{x\in \mathcal{K}}\psi(B_1(x))$. (We remark that by $A_{\psi}1$ we have $d_{\mathcal{K}} >0$.)
\end{proposition}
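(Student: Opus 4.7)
The plan is to bound the pointwise essential value of $f$ at a typical $y\in\mathcal K$ by averaging a pointwise oscillation estimate against $\psi$ over the unit ball $B_1(y)$. This turns the local quantity $|f(y)|$ into two integrals that are both directly controlled by the two pieces of $\|f\|_{BV_\alpha}$.

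First I would exploit the defining property of $osc$: if $S\subset\mathbb R^d$ is Borel, then for (Lebesgue-)a.e.\ pair $x,y\in S$ both values $f(x)$, $f(y)$ lie between $\operatorname{ess\,inf}_S f$ and $\operatorname{ess\,sup}_S f$, so
\[
|f(y)|\leq |f(x)|+osc(f,S).
\]
I apply this with $S=B_1(x)$ (note $y\in B_1(x)$ iff $x\in B_1(y)$): for a.e.\ $y\in\mathcal K$ and a.e.\ $x\in B_1(y)$,
\[
|f(y)|\leq |f(x)|+osc\bigl(f,B_1(x)\bigr).
\]

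Next I would integrate this inequality in the variable $x$ over $B_1(y)$ against $\psi$. The left side becomes $|f(y)|\,\psi(B_1(y))$. For the first integral on the right I use that $\psi$ has bounded density $\psi'$, so
\[
\int_{B_1(y)}|f(x)|\,d\psi(x)\leq \|\psi'\|_{L^\infty(\mathbb R^d)}\,\|f\|_{L^1}\leq \|\psi'\|_{L^\infty(\mathbb R^d)}\,\|f\|_{L^1_\alpha}.
\]
For the second integral I drop the restriction to $B_1(y)$ and pick out the value $\epsilon=1$ of the oscillation seminorm (this is exactly why the supremum in \eqref{def:BValpha} runs over $(0,1]$):
\[
\int_{B_1(y)} osc\bigl(f,B_1(x)\bigr)\,d\psi(x)\leq \int_{\mathbb R^d} osc\bigl(f,B_1(x)\bigr)\,d\psi(x)\leq \|f\|_{BV_\alpha}.
\]
Combining these two bounds gives $|f(y)|\,\psi(B_1(y))\leq \max(\|\psi'\|_\infty,1)\,\|f\|_{BV_\alpha}$.

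Finally I divide by $\psi(B_1(y))$ and use the definition of $d_{\mathcal K}$: for $\psi$-a.e.\ $y\in\mathcal K$ one has $\psi(B_1(y))\geq d_{\mathcal K}>0$ (positivity of $d_\mathcal K$ comes from assumption $(A_\psi 1)$, as already flagged in the statement). Taking the essential supremum over $y\in\mathcal K$ with respect to $\psi$ yields \eqref{in}. I do not anticipate a real obstacle: the only point that requires a little care is the first step, namely justifying the pointwise inequality from the essential sup/inf definition of $osc$, which is standard but must be stated with the correct a.e.\ qualifiers so that the subsequent integration in $x$ is legitimate.
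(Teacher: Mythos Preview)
Your argument is correct and follows essentially the same approach as the paper: the pointwise inequality $|f(y)|\le|f(x)|+osc(f,B_1(x))$ is integrated in $x$ over $B_1(y)$ against $\psi$, and the two resulting terms are bounded by $\|\psi'\|_\infty\|f\|_{L^1_\alpha}$ and $\|f\|_{osc}$ respectively. The only cosmetic difference is that the paper centers the oscillation ball at the fixed point in $\mathcal K$ rather than at the integration variable; your choice makes the passage to $\int_{\mathbb R^d}osc(f,B_1(x))\,d\psi(x)\le\|f\|_{osc}$ completely transparent.
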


   Following  Proposition B.6 in \cite{STh} we can write for any $x\in \mathcal{K}$ and $y\in B_1(x):$
   $|f(x)|\le |f(y)|+\text{osc}(f, B_1(x)).$ 
By integrating in $y$ over $B_1(x)$ we get 
\begin{equation}\label{qui}
  \| |f(\cdot)|\psi(B_1(\cdot)) \|_{L^{\infty}(\psi, \mathcal{K})}\le ||f||_{L^1(\psi)}+\epsilon||f||_{osc(\mathbb{R}^d)}\le ||f||_{L^1(\psi)}+||f||_{osc(\mathbb{R}^d)}.
\end{equation}

By using  the fact that $||f||_{L^1(\psi)}\le ||\psi'||_{L^{\infty}(\mathbb{R}^d)} ||f||_{L^1_2(\mathbb{R}^d)},$ we finally get the statement.

\begin{remark}
  From now on we will denote with   $||f||_{L^{\infty}( B)}$ the $L^{\infty}$ norm with respect to the Lebesgue measure restricted to the compact set $B.$ By the assumption ($A_{\psi}1$), $||f||_{L^{\infty}( B)}$ differs from $L^{\infty}(\psi, B)$ by a multiplicative constant depending only upon $B$. 
\end{remark}

\subsection{Regularization properties for the transfer operator.}
\label{sec:spacerdtotd}

In this section we see how the properties of the SDE  \eqref{eq:system1} we consider, such those derived from the Brownian motion, have a regularizing effect at the level of the associated transfer operators.

In the following lemma the notation $L_{2}^{1}$ stands for the space $L_{\alpha}^{1}$ when $\alpha=2$.

\begin{lemma}
\label{lem:pbound} Given $h>0$, there exist constants $A,B>0$ and $%
\lambda\in\left( 0,1\right) $ (also depending on $h$) such that 
\begin{equation}
\left\Vert\cl_{h}^{n}f\right\Vert _{L_{2}^{1}}\leq A\lambda^{n}\left\Vert
f\right\Vert _{L_{2}^{1}}+B\left\Vert f\right\Vert _{L^{1}} 
\label{ineq1 lemma 18}
\end{equation}
for every $f\in L_{2}^{1}$ and every $n\in\mathbb{N}$.
\end{lemma}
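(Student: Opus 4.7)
The plan is to reduce the estimate to a Foster--Lyapunov drift condition for the weight $V(x) = 1 + |x|^2 = \rho_2(|x|)^2$, obtained via It\^o's formula and the dissipativity assumption \textbf{AD}, and then iterate.

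First, I would reduce to nonnegative densities. Since $\mathcal{L}_h$ is a positive operator (its kernel $S_h$ is nonnegative by Theorem \ref{MPZ}), one has $|\mathcal{L}_h f| \leq \mathcal{L}_h |f|$ pointwise, so it suffices to prove the bound for $f \geq 0$. Then by the duality \eqref{eq:stocduality}, monotone convergence (since $V$ is not bounded), and the Kolmogorov identity $(P_h V)(x) = \mathbb{E}[V(X_h^x)]$, we get
\[
\|\mathcal{L}_h f\|_{L^1_2} = \int V(y)\,\mathcal{L}_h f(y)\, dy = \int \mathbb{E}[V(X_h^x)]\, f(x)\, dx.
\]
Thus the whole problem becomes that of controlling $\mathbb{E}[V(X_h^x)]$ by $V(x)$.

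Next, I would apply It\^o's formula to $V(X_t^x) = 1 + |X_t^x|^2$. The martingale term vanishes in expectation, and one obtains
\[
\frac{d}{dt}\mathbb{E}[V(X_t^x)] = 2\,\mathbb{E}[\langle b(X_t^x), X_t^x\rangle] + d.
\]
By the dissipativity bound \eqref{eq:dissiphyp}, $\langle b(x), x\rangle \leq R_1 - R_2 |x|^2 = R_1 + R_2 - R_2 V(x)$, which gives the linear differential inequality
\[
\frac{d}{dt}\mathbb{E}[V(X_t^x)] \leq (2R_1 + 2R_2 + d) - 2R_2\,\mathbb{E}[V(X_t^x)].
\]
Gronwall's lemma then yields, with $\lambda := e^{-2R_2 h} \in (0,1)$ and a constant $C_h > 0$,
\[
\mathbb{E}[V(X_h^x)] \leq \lambda\, V(x) + C_h.
\]

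Plugging this back into the duality expression gives $\|\mathcal{L}_h f\|_{L^1_2} \leq \lambda \|f\|_{L^1_2} + C_h \|f\|_{L^1}$ for $f \geq 0$, hence (by the splitting argument) for all $f \in L^1_2$. Iterating $n$ times and using that $\mathcal{L}_h$ is a weak $L^1$-contraction by Lemma \ref{7} (so $\|\mathcal{L}_h^k f\|_{L^1} \leq \|f\|_{L^1}$) yields
\[
\|\mathcal{L}_h^n f\|_{L^1_2} \leq \lambda^n \|f\|_{L^1_2} + \frac{C_h}{1-\lambda}\|f\|_{L^1},
\]
giving the claim with $A = 1$ and $B = C_h/(1-\lambda)$. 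The main obstacle, and the only substantive step, is the It\^o/Gronwall computation that converts the dissipativity hypothesis into a geometric Foster--Lyapunov inequality for $V$; once this drift condition is in hand, everything else is a bookkeeping iteration. A mild technical caveat is that $V$ is unbounded, so the use of \eqref{eq:stocduality} needs a monotone-approximation by truncations $V_M := V \wedge M$, which is routine since both sides are monotone in the test function.
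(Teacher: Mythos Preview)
Your argument is correct and follows essentially the same route as the paper: reduce to nonnegative densities, use duality to turn the $L^1_2$ norm into $\mathbb{E}[1+|X_t|^2]$, and then apply It\^o's formula together with \textbf{AD} and Gronwall to get geometric decay. The only organisational difference is that the paper works with a random initial condition $X_0$ of density $p$ and obtains the $n$-step bound $\mathbb{E}[1+|X_{nh}|^2]\leq C\lambda^n\mathbb{E}[1+|X_0|^2]+D$ in one shot, whereas you prove the one-step Foster--Lyapunov inequality $\mathbb{E}[V(X_h^x)]\leq\lambda V(x)+C_h$ and then iterate using the $L^1$-contraction; these are equivalent and your framing is arguably more transparent. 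One point the paper treats more carefully than you do is the justification that the It\^o martingale term has zero expectation: the paper devotes a separate step to a localisation argument with stopping times $\tau_R$ to first establish $\sup_{t\in[0,T]}\mathbb{E}|X_t|^2<\infty$, while you simply assert that ``the martingale term vanishes in expectation''---for a Lipschitz drift this is indeed a standard moment estimate, but it is the same technicality as the truncation caveat you flag for the duality step.
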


\begin{proof}
\textbf{Step 1}. Call $L_{\text{dens}}^{1}$ the set of all probability
density functions, namely the elements $p\in L^{1}$ such that $p\geq0$ a.s.
and $\left\Vert p\right\Vert _{L^{1}}=1$.

The statement of the lemma is equivalent to prove there exist constants $%
C,D>0$ and $\lambda\in\left( 0,1\right) $ such that inequality%
\begin{equation}
\left\Vert \cl_{h}^{n}p\right\Vert _{L_{2}^{1}}\leq C\lambda^{n}\left\Vert
p\right\Vert _{L_{2}^{1}}+D   \label{ineq2 lemma 18}
\end{equation}
holds true for every $p\in L_{\text{dens}}^{1}\cap L_{2}^{1}$. That the
statement of the lemma implies this new one is obvious (with the same
constants). Let us prove the converse. Take $f\in L_{2}^{1}$ and call $%
f^{+}=f\vee0$, $f^{-}=\left( -f\right) \vee0$ (thus $f=f^{+}-f^{-}$). Since $%
\left\vert f^{\pm}\left( x\right) \right\vert \leq\left\vert f\left(
x\right) \right\vert $, we have $f^{\pm}\in L_{2}^{1}\subset L^{1}$. Call $%
Z_{\pm}=\left\Vert f^{\pm}\right\Vert _{L^{1}}$ and assume $Z_{\pm}>0$ (the
case when one or both are zero is easier). Call $p^{\pm}=Z_{\pm}^{-1}f^{\pm}$%
, elements of $L_{\text{dens}}^{1}\cap L_{2}^{1}$. By assumption, 
\begin{equation*}
\left\Vert \cl_{h}^{n}p^{\pm}\right\Vert _{L_{2}^{1}}\leq
C\lambda^{n}\left\Vert p^{\pm}\right\Vert _{L_{2}^{1}}+D 
\end{equation*}
hence, by linearity of $L_{h}^{n}$ and homogeneity of the norms, 
\begin{equation*}
\left\Vert \cl_{h}^{n}f^{\pm}\right\Vert _{L_{2}^{1}}\leq
C\lambda^{n}\left\Vert f^{\pm}\right\Vert _{L_{2}^{1}}+Z_{\pm}D. 
\end{equation*}
Again by linearity of $\cl_{h}^{n}$, 
\begin{align*}
\left\Vert \cl_{h}^{n}f\right\Vert _{L_{2}^{1}} & =\left\Vert \cl_{h}^{n}\left(
f^{+}-f^{-}\right) \right\Vert _{L_{2}^{1}}\leq\left\Vert
\cl_{h}^{n}f^{+}\right\Vert _{L_{2}^{1}}+\left\Vert \cl_{h}^{n}f^{-}\right\Vert
_{L_{2}^{1}} \\
& \leq C\lambda^{n}\left( \left\Vert f^{+}\right\Vert
_{L_{2}^{1}}+\left\Vert f^{-}\right\Vert _{L_{2}^{1}}\right) +\left(
\left\Vert f^{+}\right\Vert _{L^{1}}+\left\Vert f^{-}\right\Vert
_{L^{1}}\right) D \\
& \leq2C\lambda^{n}\left\Vert f\right\Vert _{L_{2}^{1}}+2D\left\Vert
f\right\Vert _{L^{1}}
\end{align*}
where in the last step we have used the fact that $\left\vert f^{\pm}\left(
x\right) \right\vert \leq\left\vert f\left( x\right) \right\vert $, and the
definition of the norms in $L_{2}^{1}$ and $L^{1}$. Therefore (\ref{ineq1
lemma 18}) holds with $A=2C$ and $B=2D$.

\textbf{Step 2}. Let us prove (\ref{ineq2 lemma 18}). Given $p\in L_{\text{%
dens}}^{1}\cap L_{2}^{1}$, on a suitable probability space choose a random
initial condition $X_{0}$ with density $p$, independent of the Brownian
motion. Call $X_{t}$ the solution of the Cauchy problem with initial
condition $X_{0}$. Recall that the Kolmogorov operator $P_{t}$ is defined by
means of the solutions $X_{t}\left( x\right) $ of the same Cauchy problem
but with deterministic initial condition $x$, $\left( P_{t}\phi \right)
\left( x\right) =\mathbb{E}\left[ \phi \left( X_{t}\left( x\right) \right) %
\right] $. A simple disintegration argument proves that 
\begin{equation}
\int_{\mathbb{R}^{d}}\left( P_{t}\phi \right) \left( x\right) p\left(
x\right) dx=\mathbb{E}\left[ \phi \left( X_{t}\right) \right] 
\label{Kolmogorov and random ic}
\end{equation}%
where $X_{t}$, as defined above, is the solution with initial condition $%
X_{0}$. Since in our case \ $X_{t}^{x}$ \ has density \ $S_t(x,y)$ absolutely continuous w.r.t to Lebesgue, we have 
\[
\mathbb{E}[\phi(X_t)]  =  \mathbb{E} \left[ \mathbb{E}[\phi(X_t)|X_0]   \right],
\]
since it holds 
\[
 \mathbb{E}[\phi(X_t)|X_0] = P_t  \phi  ( X_0 ).
\]
Since the law of $X_0$ is $p(x)dx$,  we rewrite the conditional expectation by the law
\[
   E[    (P_t  \phi)  ( X_0 )   ]  =  \int  (P_t  \phi)  ( x )  p(x)dx.
\]

We shall use now this relation together with the duality relation
between the Kolmogorov operator and the transfer operator. We have (recall
that, by Lemma \ref{ror}, $\cl_{h}^{n}=\cl_{hn}$) 
\begin{eqnarray*}
\left\Vert \cl_{h}^{n}p\right\Vert _{L_{2}^{1}} &=&\int_{\mathbb{R}^{d}}\left(
1+\left\vert x\right\vert ^{2}\right) \left\vert \left( \cl_{hn}p\right)
\left( x\right) \right\vert dx \\
&=&\int_{\mathbb{R}^{d}}\left( 1+\left\vert x\right\vert ^{2}\right) \left(
\cl_{hn}p\right) \left( x\right) dx
\end{eqnarray*}%
because $p$ is a probability density and $\cl_{hn}p$ is thus non-negative, 
\begin{equation*}
=\int_{\mathbb{R}^{d}}\left( P_{hn}\phi \right) \left( x\right) p\left(
x\right) dx
\end{equation*}%
by the duality relation mentioned above, where $\phi \left( x\right)
=1+\left\vert x\right\vert ^{2}$, 
\begin{equation*}
=\mathbb{E}\left[ \phi \left( X_{hn}\right) \right] 
\end{equation*}%
by (\ref{Kolmogorov and random ic}). We have thus proved that {\ } 
\begin{equation*}
\left\Vert \cl_{h}^{n}p\right\Vert _{L_{2}^{1}}=\mathbb{E}\left[ 1+\left\vert
X_{nh}\right\vert ^{2}\right] .
\end{equation*}%
Hence inequality (\ref{ineq2 lemma 18}) reduces to prove%
\begin{equation}
\mathbb{E}\left[ 1+\left\vert X_{nh}\right\vert ^{2}\right] \leq C\lambda
^{n}\mathbb{E}\left[ 1+\left\vert X_{0}\right\vert ^{2}\right] +D.
\label{ineq3 lemma 18}
\end{equation}

\textbf{Step 3}. In this step we prove the inequality
\begin{equation}
\mathbb{E}\left[ \left\vert X_{t}\right\vert ^{2}\right] \leq e^{-2tR_{2}}%
\mathbb{E}\left[ \left\vert X_{0}\right\vert ^{2}\right] +\frac{2R_{1}+d}{%
2R_{2}}  \label{ineq4 lemma 18}
\end{equation}%
where $R_{1},R_{2}$ are the constants in the assumption on $b$ 
and $d$ is the space dimension. It is straightforward to see that (\ref%
{ineq4 lemma 18}) implies (\ref{ineq3 lemma 18}), completing the proof of
the lemma.

It is well known that, when $\mathbb{E}\left[ \left\vert X_{0}\right\vert
^{2}\right] <\infty$, we have 
\begin{equation*}
\sup_{t\in\left[ 0,T\right] }\mathbb{E}\left[ \left\vert X_{t}\right\vert
^{2}\right] <\infty 
\end{equation*}
for every $T>0$ (also with the supremum inside the expectation). For completeness we give the proof of this statement in Step 4 below. Here we shall use this fact. By It\^{o} formula,%
\begin{equation*}
\left\vert X_{t}\right\vert ^{2}=\left\vert X_{0}\right\vert
^{2}+\int_{0}^{t}2\left\langle X_{s},b\left( X_{s}\right) \right\rangle
ds+\int_{0}^{t}2\left\langle X_{s},dW_{s}\right\rangle +\text{Tr}\left( I\right) t.
\end{equation*}%
Assume $\mathbb{E}\left[ \left\vert X_{0}\right\vert ^{2}\right] <\infty $.
Then $\mathbb{E}\int_{0}^{T}\left\vert X_{t}\right\vert ^{2}dt<\infty $ for
every $T>0$ and thus, by the properties of It\^{o} integrals, $\mathbb{E}%
\int_{0}^{t}2\left\langle X_{s},dW_{s}\right\rangle =0$. Then%
\begin{equation*}
\mathbb{E}\left[ \left\vert X_{t}\right\vert ^{2}\right] =\mathbb{E}\left[
\left\vert X_{0}\right\vert ^{2}\right] +\int_{0}^{t}2\mathbb{E}\left\langle
X_{s},b\left( X_{s}\right) \right\rangle ds+\text{Tr}\left( I\right) t.
\end{equation*}%
This identity and the fact that the function $s\mapsto \mathbb{E}%
\left\langle X_{s},b\left( X_{s}\right) \right\rangle $ is continuous, imply
that the function $t\mapsto \mathbb{E}\left[ \left\vert X_{t}\right\vert ^{2}%
\right] $ is of class $C^{1}$ and%
\begin{equation*}
\frac{d}{dt}\mathbb{E}\left[ \left\vert X_{t}\right\vert ^{2}\right] =2%
\mathbb{E}\left\langle X_{t},b\left( X_{t}\right) \right\rangle +d.
\end{equation*}%
From the assumptions on $b$, 
\begin{equation*}
\frac{d}{dt}\mathbb{E}\left[ \left\vert X_{t}\right\vert ^{2}\right] \leq
-2R_{2}\mathbb{E}\left[ \left\vert X_{t}\right\vert ^{2}\right] +2R_{1}+d.
\end{equation*}%
This implies%
\begin{align*}
\mathbb{E}\left[ \left\vert X_{t}\right\vert ^{2}\right] & \leq e^{-2R_{2}t}%
\mathbb{E}\left[ \left\vert X_{0}\right\vert ^{2}\right] +%
\int_{0}^{t}e^{-2R_{2}\left( t-s\right) }\left( 2R_{1}+d\right) ds \\
& \leq e^{-2R_{2}t}\mathbb{E}\left[ \left\vert X_{0}\right\vert ^{2}\right] +%
\frac{2R_{1}+d}{2R_{2}}.
\end{align*}

\textbf{Step 4}. Given $R>0$, let $\tau_{R}$ be the first time $\left\vert
X_{t}\right\vert $ exceeds $R$, infinity if this never happens. We have%
\begin{align*}
\left\vert X_{t\wedge\tau_{R}}\right\vert ^{2} & =\left\vert
X_{0}\right\vert ^{2}+\int_{0}^{t\wedge\tau_{R}}2\left\langle X_{s},b\left(
X_{s}\right) \right\rangle ds+\int_{0}^{t\wedge\tau_{R}}2\left\langle
X_{s},dW_{s}\right\rangle +\text{Tr}\left( I\right) t\wedge\tau_{R} \\
& =\left\vert X_{0}\right\vert ^{2}+\int_{0}^{t}1_{\left\{ s\leq\tau
_{R}\right\} }2\left\langle X_{s},b\left( X_{s}\right) \right\rangle
ds+\int_{0}^{t}1_{\left\{ s\leq\tau_{R}\right\} }2\left\langle
X_{s},dW_{s}\right\rangle +\text{Tr}\left( I\right) t\wedge\tau_{R}.
\end{align*}
Since $\mathbb{E}\int_{0}^{t}1_{\left\{ s\leq\tau_{R}\right\} }2\left\langle
X_{s},dW_{s}\right\rangle =0$ we get 
\begin{align*}
\mathbb{E}\left[ \left\vert X_{t\wedge\tau_{R}}\right\vert ^{2}\right] & \leq%
\mathbb{E}\left[ \left\vert X_{0}\right\vert ^{2}\right] +\mathbb{E}%
\int_{0}^{t}1_{\left\{ s\leq\tau_{R}\right\} }2\left\langle X_{s},b\left(
X_{s}\right) \right\rangle ds+\text{Tr}\left( I\right) t \\
& \leq\mathbb{E}\left[ \left\vert X_{0}\right\vert ^{2}\right] +\mathbb{E}%
\int_{0}^{t}1_{\left\{ s\leq\tau_{R}\right\} }\left( 2L_{2}\left\vert
X_{s}\right\vert ^{2}+2C_{1}\right) ds+\text{Tr}\left( I\right) t \\
& \leq\mathbb{E}\left[ \left\vert X_{0}\right\vert ^{2}\right] +\mathbb{E}%
\int_{0}^{t}\left( 2R_{2}\left\vert X_{s\wedge\tau_{R}}\right\vert
^{2}+2C_{1}\right) ds+\text{Tr}\left( I\right) t.
\end{align*}
By Gronwall lemma this implies, given any $T>0$,%
\begin{equation*}
\mathbb{E}\left[ \left\vert X_{t\wedge\tau_{R}}\right\vert ^{2}\right]
\leq\left( \mathbb{E}\left[ \left\vert X_{0}\right\vert ^{2}\right]
+2TC_{1}+Td\right) e^{2R_{2}T}=:C 
\end{equation*}
for every $t\in\left[ 0,T\right] $. By Fatou lemma, 
\begin{equation*}
\mathbb{E}\left[ \lim_{R\rightarrow\infty}\left\vert
X_{t\wedge\tau_{R}}\right\vert ^{2}\right] \leq C. 
\end{equation*}
Now, a.s., $\lim_{R\rightarrow\infty}\tau_{R}=+\infty$, because the solution 
$X_{t}$ exists globally. Hence $\lim_{R\rightarrow\infty}\left\vert
X_{t\wedge\tau_{R}}\right\vert ^{2}=\left\vert X_{t}\right\vert ^{2}$ and
the proof is complete.
\end{proof}

In the following Lemma we see how the presence of the noise, and then the possibility to see the transfer operator as a kernel operator, also provides a form of regularization.

\begin{lemma} \label{lem:Rdregularization}
For every $t>0$, $\cl_{t}$ is bounded linear from $L^{1}\left(
\bR^{d}\right)  $ to $C^{1}\left(  \bR^{d}\right)  $; in
particular there exists $C_{t}>0$ such that
\begin{equation} \label{C1}
\left\Vert \cl_{t}f\right\Vert _{C^{1}\left(  \bR^{d}\right)
}\leq C_{t}\left\Vert f\right\Vert _{L^{1}\left(  \bR^{d}\right)  }.
\end{equation}
Moreover if $f \in L^1_{2}$ then
\begin{equation} \label{eq:RdstrongLY}
\left\Vert \cl_{t}f\right\Vert _{BV_{2} }\leq  C_{t,2} \left\Vert f\right\Vert _{L^{1}_{2}\left(  \bR^{d}\right)  }.
\end{equation}
\end{lemma}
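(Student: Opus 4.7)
The plan is to exploit the kernel representation \eqref{eq:stoctransfer} together with the Gaussian two-sided density bound and the gradient estimate from Theorem \ref{MPZ}. The key observation is that $\|g_{\lambda}(t,\cdot)\|_\infty = t^{-d/2}$ for every $\lambda>0$, so that convolution-type estimates against the $L^1$ norm of $f$ give uniform pointwise control on $\mathcal{L}_t f$ and its gradient.

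For the $C^1$ bound \eqref{C1}, I would first use the upper bound $S_t(x,y) \leq C_0\, g_{\lambda_0}(t,\theta_t(x)-y)$ to get
\[
|(\mathcal{L}_t f)(y)| \leq C_0 \int g_{\lambda_0}(t,\theta_t(x)-y)|f(x)|\,dx \leq C_0\, t^{-d/2}\|f\|_{L^1}.
\]
Differentiation under the integral sign is legitimate by dominated convergence using the kernel gradient estimate in part \textbf{2} of Theorem \ref{MPZ}, which also yields
\[
|\nabla_y (\mathcal{L}_t f)(y)| \leq C_1 t^{-1/2} \int g_{\lambda_1}(t,\theta_t(x)-y)|f(x)|\,dx \leq C_1\, t^{-(d+1)/2}\|f\|_{L^1}.
\]
Continuity of $\mathcal{L}_t f$ and $\nabla_y \mathcal{L}_t f$ follows once more from dominated convergence, since both kernels are continuous in $y$ and dominated uniformly on compact sets by an integrable multiple of $|f|$.

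For the weighted bound \eqref{eq:RdstrongLY}, I would split the $BV_2$ norm into its $L^1_2$ component and its oscillation component. The $L^1_2$ component is immediate from Lemma \ref{lem:pbound} applied with $n=1$: there exist $A,B>0$ and $\lambda\in(0,1)$ (depending on $t$) such that
\[
\|\mathcal{L}_t f\|_{L^1_2} \leq A\lambda\,\|f\|_{L^1_2} + B\,\|f\|_{L^1} \leq (A\lambda + B)\,\|f\|_{L^1_2},
\]
using $\|f\|_{L^1}\leq \|f\|_{L^1_2}$. For the oscillation part, the $C^1$ bound just established together with the mean value inequality gives, for every $z\in\mathbb{R}^d$ and every $\epsilon\in(0,1]$,
\[
\operatorname{osc}(\mathcal{L}_t f, B_\epsilon(z)) \leq 2\epsilon\,\|\nabla \mathcal{L}_t f\|_\infty \leq 2\epsilon\, C_t\,\|f\|_{L^1}.
\]
Since $\psi$ is a probability measure, integrating against $\psi$ and dividing by $\epsilon$ yields
\[
\sup_{\epsilon\in(0,1]} \epsilon^{-1}\int \operatorname{osc}(\mathcal{L}_t f, B_\epsilon(z))\,d\psi(z) \leq 2C_t\,\|f\|_{L^1} \leq 2C_t\,\|f\|_{L^1_2}.
\]
Adding the two contributions produces \eqref{eq:RdstrongLY}.

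No serious obstacle is expected here: the statement is essentially a direct consequence of the Gaussian kernel bounds of Theorem \ref{MPZ} combined with Lemma \ref{lem:pbound}. The only technicality worth double-checking is the legitimacy of differentiation under the integral sign, but the explicit Gaussian gradient bound provides the required locally integrable majorant uniformly in $y$ on compact sets, so the argument is routine.
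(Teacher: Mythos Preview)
Your proposal is correct and follows essentially the same route as the paper: the $C^1$ bound comes from the Gaussian kernel and gradient estimates of Theorem~\ref{MPZ}, while the $BV_2$ bound is obtained by combining Lemma~\ref{lem:pbound} for the $L^1_2$ part with the mean-value estimate $\operatorname{osc}(\mathcal{L}_t f,B_\epsilon)\le 2\epsilon\|\nabla \mathcal{L}_t f\|_\infty$ integrated against the probability measure $\psi$. Your write-up is in fact more explicit than the paper's, which only sketches these steps.
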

\begin{proof} The estimate
\eqref{C1} follows by the definition of the transfer operator \eqref{eq:stoctransfer}
and the estimate on its derivatives by the results of Menotti-Pesce-Zhang i.e. item 2 of Theorem \ref{MPZ}.
The estimate 
\eqref{eq:RdstrongLY}
follows from
\eqref{def:BValpha} considering that $\|\mathcal{L}_t(f)\|_{L^1_2}$ is estimated by Lemma \ref{lem:pbound} 

{  and for $f \in C^1(\mathbb{R}^d)$ it holds 
\begin{equation} \label{eq:osctoC1}
\begin{split}
\|f\|_{osc(\mathbb{R}^d)} & = \sup_{\epsilon\le 1} \epsilon^{-1} 
\int_{\bR^d} osc(f, B_{\epsilon}(x))d\psi(x)   \\
& \leq  2 \| f \|_{C^1} \int_{\bR^d} d\psi(x)  \leq 2 \|f\|_{C^1}
\end{split}
\end{equation}
since $\psi $ is a probability measure. 
}
Finally by \eqref{C1} we can bound $\|f\|_{osc(\mathbb{R}^d)}$ by the $L^1$ norm and then by the $L^1_2$ norm of $f$.
\end{proof} 
By Lemma  \ref{lem:pbound} 
 and Lemma \ref{lem:Rdregularization} we get

\begin{lemma}\label{200} Given $h>0$, there exist constants $A,B>0$ and $\lambda\in\left(  0,1\right)
$ (also depending on $h$) such that
\begin{equation}
\left\Vert \cl_{h}^{n}f\right\Vert _{BV_{2}}\leq A\lambda^{n}\left\Vert
f\right\Vert _{BV_{2}}+B\left\Vert f\right\Vert _{L^{1}}
\end{equation}
for every $f\in BV_{2}$ and every $n\in\mathbb{N}$.
\end{lemma}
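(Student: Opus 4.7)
The plan is to deduce the $BV_2$ Lasota--Yorke inequality by composing the two preceding ingredients: Lemma~\ref{lem:pbound}, which is a Lasota--Yorke inequality for the \emph{weighted $L^1$} norm $L^1_2$, and the regularization estimate \eqref{eq:RdstrongLY} from Lemma~\ref{lem:Rdregularization}, which upgrades $L^1_2$ regularity to $BV_2$ regularity after one application of $\cl_h$. The key observation is that the bound \eqref{eq:RdstrongLY} tells us
\[
\|\cl_h g\|_{BV_2}\le C_{h,2}\,\|g\|_{L^1_2}\qquad\text{for every }g\in L^1_2,
\]
so one application of $\cl_h$ already controls the full $BV_2$ norm in terms of the weaker $L^1_2$ norm; the weighted $L^1_2$ Lasota--Yorke then provides the exponential contraction.

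For $n\ge 1$ the plan is to write $\cl_h^{n}f=\cl_h(\cl_h^{n-1}f)$, apply \eqref{eq:RdstrongLY} to the outer $\cl_h$, and then use Lemma~\ref{lem:pbound} on the inner $\cl_h^{n-1}f$. Concretely,
\begin{align*}
\|\cl_h^{n}f\|_{BV_2}
&\le C_{h,2}\,\|\cl_h^{n-1}f\|_{L^1_2}\\
&\le C_{h,2}\bigl(A'\lambda^{n-1}\|f\|_{L^1_2}+B'\|f\|_{L^1}\bigr),
\end{align*}
where $A',B',\lambda$ are the constants furnished by Lemma~\ref{lem:pbound}. Since $\|f\|_{L^1_2}\le\|f\|_{BV_2}$ by the very definition \eqref{def:BValpha}, this reads
\[
\|\cl_h^{n}f\|_{BV_2}\le \bigl(C_{h,2}A'/\lambda\bigr)\lambda^{n}\|f\|_{BV_2}+C_{h,2}B'\|f\|_{L^1},
\]
and setting $A:=C_{h,2}A'/\lambda$ and $B:=C_{h,2}B'$ gives the desired bound for $n\ge 1$. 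The case $n=0$ is trivial: the inequality reduces to $\|f\|_{BV_2}\le A\|f\|_{BV_2}+B\|f\|_{L^1}$, which holds provided $A\ge 1$; if the constant produced above is smaller than $1$, we simply enlarge it (the inequality for $n\ge 1$ is preserved by increasing $A$).

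There is essentially no obstacle here beyond bookkeeping: the whole content of the statement has already been packaged into Lemma~\ref{lem:pbound} (the dissipative estimate on $\mathbb{E}[|X_{nh}|^2]$) and Lemma~\ref{lem:Rdregularization} (the Aronson-type kernel bounds of Theorem~\ref{MPZ} used to get $\cl_h:L^1\to C^1$, hence the bounded oscillatory seminorm by \eqref{eq:osctoC1}). The only subtle point worth flagging is the compatibility of the constants: since \eqref{eq:RdstrongLY} is only applied \emph{once}, the $h$-dependent constant $C_{h,2}$ enters only as a prefactor and does not interfere with the exponential decay rate $\lambda^n$ inherited from Lemma~\ref{lem:pbound}.
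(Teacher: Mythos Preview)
Your argument is correct and follows exactly the paper's own proof: apply the regularization bound \eqref{eq:RdstrongLY} to the outermost $\cl_h$ to pass from $BV_2$ to $L^1_2$, then invoke the weighted $L^1$ Lasota--Yorke of Lemma~\ref{lem:pbound} on $\cl_h^{n-1}f$ and absorb constants. Your treatment is in fact slightly more careful than the paper's (explicit handling of $n=0$ and of the factor $1/\lambda$), but the strategy is identical.
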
 
\begin{proof}
By Lemma \ref{lem:Rdregularization}
\begin{equation}
\left\Vert \cl L_{h}^{n}f\right\Vert _{BV_{2}}\leq C_{h,2} ||\cl_{h}^{n-1}f||_{L_2^1}
\end{equation}
and by Lemma \ref{lem:pbound} 
\begin{equation}
  ||\cl_{h}^{n-1}f||_{L_2^1}\leq
   A\lambda^{n-1}\left\Vert
f\right\Vert _{L_{2}^1}+B\left\Vert f\right\Vert _{L^{1}}. 
\end{equation}

Putting together these two inequalities we get the statement\footnote{To simplify notations we keep the same $A$ and $B$ as in Lemma  \ref{lem:pbound}, although they differ by a constant.}.
\end{proof}

\begin{remark}
As it is well known (see e.g \cite{HH})  the Lasota-Yorke-Doeblin-Fortet  inequality proved in Lemma \ref{200} together with the compact embedding proved in Theorem \ref{thm:embeddingRD} implies that the operator $\cl_t$ acting on $BV_2$ is quasicompact (see Section \ref{app:repfo} for more details).  \
The use of \eqref{eq:RdstrongLY} togheter with Lemma \ref{lem:pbound}, and Theorem \ref{thm:embeddingRD} also implies that $\mathcal{L}_t$ is quasicompact when acting on $L^1_2$.

It is worth to remark that since the phase space is $\mathbb{R}^d$ (and thus not bounded)   $\mathcal{L}_t$ is not in general a compact operator when acting  on $L^1_2$ and hence we cannot rely on a simple spectral perturbation theory for such operators.

For simplicity let us illustrate this in the particular case where  $b(x)=-x$ and $h$ small enough.
 We will find a bounded sequence $f_n\in L^1_2$  such that $\mathcal{L}_t f_n$ has no converging subsequences.
If $h$ is small enough for each $x_0\in \mathbb{R}^d$ by Theorem \ref{MPZ} one can find  a radius $r_0<1$ and a point $y_0$   such that if a  density $f_0$ with $||f_0||_{BV_2}=1$ is supported on $B(x_0,r_0)$ then $\cl_h(f_0) $
is such that $\int_{B(y_0,1)} \cl_h(f_0) dx\geq 0.9 \int_{\mathbb{R}^d} f_0 dx$
(think about $r_0$ being very small and $f_0 $ concentrated around a certain point $x_0$, then $\cl_h(f_0)$ will be concentrated in a certain neighborhood of $y_0=\theta_h(x_0)$ and the distribution will be dominated by gaussians depending on $h$).
Still by Theorem \ref{MPZ} and the fact that $b(x)=-x$ we can get that if $h$ is small enough we can choose $y_0$ such that $|y_0|\geq 0.7 |x_0|$.
Thus, if $|x_0|\geq 10$

\begin{equation}\int_{B(y_0,1)} \cl_h(f_0)[ |x|^2+1] dx\geq \frac 12 ||f_0||_{L_2^1}=\frac 12.
\end{equation}
Now we can find $f_1$ with support far enough from the support of $f_0$ such that: $||f_1||_{L^1_2}=1$, the support of $f_1$ is near to a point $x_1$ with $|x_1|\geq 10$ as before, there is a point $y_1$, such that
$\int_{B(y_1,1)} \cl_h(f_1) dx\geq 0.9 \int_{\mathbb{R}^d} f_1 dx$ and $B(y_1,1)\cap B(y_0,1)=\emptyset$. 
This mean that $||\cl_h(f_1)-\cl_h(f_0)||_{L^2_1}\geq \frac 12.$ One can then find similarly $f_2$
with support far enough from $f_0,f_1$ such that  $||\cl_h(f_j)-\cl_h(f_i)||_{L^2_1}\geq \frac 12 $  for $i\neq j\in \{0,1,2\}$ and so on, finding a sequence $f_i$ on the unit ball such that $\mathcal{L}_tf_i$ has no converging subsequences.
\end{remark}

\subsection{Regularization for the perturbed operators}
\label{rpo}
In this section we prove a uniform Lasota Yorke inequality for the perturbed operators with strong and weak spaces $BV_2, \ L^1_2$.

\begin{proposition} \label{prop:verR3_Rd}
 
There is $0<\lambda<1$  and two positive constants $A', B'$ such that for any  $f\in BV_{2}(\bR^d),$ $t>0,$ $n\ge 1$ and  $m\ge 1$, we have 

$$||\cl_{t,n}^m f||_{BV_2}\le A' \lambda^{m}||f||_{BV_2}+B'||f||_{L^1({\mathbb{R}^d}).}$$
\end{proposition}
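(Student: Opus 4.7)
The plan is to decouple the two ingredients of the $BV_{2}$ norm. For the weighted $L^{1}_{2}$ part, I exploit a pointwise domination of the perturbed iterates by the unperturbed ones; for the oscillation seminorm, I exploit the $C^{1}$ smoothing provided by Lemma~\ref{lem:Rdregularization}, combined with an elementary estimate on the $\psi$-mass of an $\epsilon$-tubular neighborhood of $\partial B_{n}$.

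For the $L^{1}_{2}$ part, first observe that when $f\ge 0$, positivity of $\cl_{t}$ and the bound $1_{B_{n}^{c}}\le 1$ give $\cl_{t,n}f\le \cl_{t}f$ pointwise, and a straightforward induction yields $\cl_{t,n}^{m}f\le \cl_{t}^{m}f$. Writing $f=f^{+}-f^{-}$ one obtains
\[
|\cl_{t,n}^{m}f|\le \cl_{t,n}^{m}f^{+}+\cl_{t,n}^{m}f^{-}\le \cl_{t}^{m}|f|\quad\text{pointwise,}
\]
so Lemma~\ref{lem:pbound} applied to $|f|$ gives
\[
\|\cl_{t,n}^{m}f\|_{L^{1}_{2}}\le \|\cl_{t}^{m}|f|\|_{L^{1}_{2}}\le A\lambda^{m}\|f\|_{L^{1}_{2}}+B\|f\|_{L^{1}}.
\]

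For the oscillation seminorm, write $\cl_{t,n}^{m}f=1_{B_{n}^{c}}(\cl_{t}g)$ with $g:=\cl_{t,n}^{m-1}f$. Lemma~\ref{lem:Rdregularization} gives $\cl_{t}g\in C^{1}$ with
\[
\max\bigl(\|\cl_{t}g\|_{\infty},\|\nabla\cl_{t}g\|_{\infty}\bigr)\le C_{t}\|g\|_{L^{1}}\le C_{t}\|f\|_{L^{1}},
\]
the last inequality using that both $\cl_{t}$ and multiplication by $1_{B_{n}^{c}}$ are $L^{1}$ contractions. For fixed $\epsilon\in(0,1]$ and $x\in\bR^{d}$ there are three cases: if $B_{\epsilon}(x)\subset B_{n}^{c}$ then $osc(\cl_{t,n}^{m}f,B_{\epsilon}(x))=osc(\cl_{t}g,B_{\epsilon}(x))\le 2\epsilon\|\nabla\cl_{t}g\|_{\infty}$; if $B_{\epsilon}(x)\subset B_{n}$ the oscillation vanishes; otherwise $x$ lies in the tube $T_{\epsilon}:=\{y:B_{\epsilon}(y)\cap\partial B_{n}\ne\emptyset\}$ and the oscillation is at most $2\|\cl_{t}g\|_{\infty}$. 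The key uniform geometric input is
\[
\psi(T_{\epsilon})\le \|\psi'\|_{\infty}\,\mathrm{Leb}\bigl(B(x_{0},r_{n}+\epsilon)\setminus B(x_{0},\max(r_{n}-\epsilon,0))\bigr)\le C_{d}\,\epsilon
\]
valid for all $\epsilon\in(0,1]$ and $r_{n}:=e^{-u_{n}}\in(0,1]$, with $C_{d}$ independent of $n$: split into $\epsilon\le r_{n}$ (annular volume $\le c_{d}\cdot 2d\,\epsilon(r_{n}+\epsilon)^{d-1}\lesssim\epsilon$ by the mean value theorem) and $\epsilon>r_{n}$ (volume $\le c_{d}(2\epsilon)^{d}\le c_{d}2^{d}\epsilon$ since $\epsilon\le 1$). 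Integrating the three cases and dividing by $\epsilon$ yields
\[
\sup_{\epsilon\le 1}\epsilon^{-1}\int_{\bR^{d}}osc(\cl_{t,n}^{m}f,B_{\epsilon}(x))\,d\psi(x)\le C_{1}\|f\|_{L^{1}}
\]
for some $C_{1}$ depending only on $t$, $d$ and $\|\psi'\|_{\infty}$.

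Combining the two bounds, and using $\|f\|_{L^{1}_{2}}\le\|f\|_{BV_{2}}$, one obtains
\[
\|\cl_{t,n}^{m}f\|_{BV_{2}}\le A\lambda^{m}\|f\|_{BV_{2}}+(B+C_{1})\|f\|_{L^{1}},
\]
which is the desired Lasota--Yorke inequality with $A':=A$, $B':=B+C_{1}$, and $\lambda$ inherited from Lemma~\ref{lem:pbound}. The delicate point is precisely the uniformity in $n$ of the boundary contribution: since the jump of $1_{B_{n}^{c}}$ lives on the shrinking sphere $\partial B_{n}$, one has to check that its $\epsilon$-thickening carries only $O(\epsilon)$ $\psi$-mass with an implicit constant independent of the radius $r_{n}$, which the two-regime annulus computation above provides.
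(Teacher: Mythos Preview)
Your proof is correct and follows essentially the same route as the paper: the $L^{1}_{2}$ part is handled by the pointwise domination $\cl_{t,n}^{m}|f|\le \cl_{t}^{m}|f|$ together with Lemma~\ref{lem:pbound} (this is exactly the content of the paper's Proposition~\ref{correz1}), and the oscillation part is controlled by writing $\cl_{t,n}^{m}f=1_{B_{n}^{c}}\cl_{t}(\cl_{t,n}^{m-1}f)$, invoking the $C^{1}$ smoothing of Lemma~\ref{lem:Rdregularization}, and bounding the boundary contribution via the two-regime annulus volume estimate. The only cosmetic difference is that the paper routes the $L^{\infty}$ control of $\cl_{t}g$ through Proposition~\ref{imm} and the $L^{1}_{2}$ norm (hence picking up an extra $\lambda^{m}$ term in the oscillation bound), whereas you bound $\|\cl_{t}g\|_{\infty}\le C_{t}\|g\|_{L^{1}}\le C_{t}\|f\|_{L^{1}}$ directly; your version is slightly cleaner and yields the same inequality.
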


The proof of the Proposition is based on the following two preliminary results.
\begin{proposition}
\label{correz1}There are $\lambda <1,$ $A,B\geq 0$\footnote{Again we keep the same $A$ and $B$ as in Lemma  \ref{lem:pbound}, although they differ by a constant.} such that for each $f\in
L_{2 }^{1}$, $m,n\geq 1$%
\begin{equation}
||\cl_{t,n}^{m}f||_{L_{2}^{1}}\leq A\lambda
^{m}||f||_{L_{2}^{1}}+B||f||_{L^{1}}.
\end{equation}
\end{proposition}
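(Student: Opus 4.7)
The plan is to reduce the estimate for the perturbed operator $\cl_{t,n}$ to the unperturbed estimate of Lemma \ref{lem:pbound} by a pointwise domination argument. The key observation is that the perturbation amounts to multiplication by $1_{B_n^c}$, a function bounded by $1$, while $\mathcal{L}_t$ is a positive integral operator. Combining these two facts should yield, for every $f\in L^1_2$, the pointwise bound
\[
\bigl|\cl_{t,n}^{m}f(x)\bigr|\le \bigl(\mathcal{L}_t^{m}|f|\bigr)(x)\qquad\text{a.e. }x\in\mathbb{R}^d,
\]
from which the weighted $L^1$ estimate will follow immediately.

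To establish the pointwise bound, I would argue by induction on $m$. For $m=1$, writing $f=f^+-f^-$ and using positivity of $\mathcal{L}_t$ gives $|\mathcal{L}_tf|\le \mathcal{L}_t f^{+}+\mathcal{L}_t f^{-}=\mathcal{L}_t|f|$, and multiplying by $1_{B_n^c}\in[0,1]$ yields $|\cl_{t,n}f|\le \mathcal{L}_t|f|$. For the inductive step, assuming $|\cl_{t,n}^{m-1}f|\le \mathcal{L}_t^{m-1}|f|$, one applies the same reasoning to the function $g:=\cl_{t,n}^{m-1}f$:
\[
|\cl_{t,n}^{m}f|=1_{B_n^c}|\mathcal{L}_t g|\le \mathcal{L}_t|g|\le \mathcal{L}_t\bigl(\mathcal{L}_t^{m-1}|f|\bigr)=\mathcal{L}_t^{m}|f|,
\]
where the monotonicity $|g|\le \mathcal{L}_t^{m-1}|f|$ combined with positivity of $\mathcal{L}_t$ gives $\mathcal{L}_t|g|\le \mathcal{L}_t^{m}|f|$.

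With the domination in hand, integrating against the weight $\rho_2(|x|)=1+|x|^2$ gives
\[
\|\cl_{t,n}^{m}f\|_{L^1_2}\le \bigl\|\mathcal{L}_t^{m}|f|\bigr\|_{L^1_2},
\]
and since $|f|\in L^1_2$ with $\||f|\|_{L^1_2}=\|f\|_{L^1_2}$ and $\||f|\|_{L^1}=\|f\|_{L^1}$, Lemma \ref{lem:pbound} applied to $|f|$ (with $t=h$ identified with the fixed $t$ here) produces the desired inequality with the same constants $A$, $B$, and $\lambda$ as in that lemma, uniformly in $n$.

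There is really no substantive obstacle: all the work has already been done in Lemma \ref{lem:pbound}, and the novelty here is only that the estimate survives the introduction of the indicator factor $1_{B_n^c}$. The only care needed is to avoid splitting $f=f^{+}-f^{-}$ and iterating on each piece separately (which would produce an incorrect bound because $\cl_{t,n}^{m}$ does not preserve positivity in a way that separates $f^{+}$ and $f^{-}$ after the first step); the right route is the pointwise domination above, which is linear in $f$ under absolute values and therefore iterates cleanly.
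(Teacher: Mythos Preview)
Your proof is correct and follows essentially the same idea as the paper: both establish a pointwise domination by the unperturbed iterates and then invoke Lemma~\ref{lem:pbound}. The paper first proves the case $f\ge 0$ (via the observation that $g_m:=\mathcal{L}_t^m f-\cl_{t,n}^m f\ge 0$, hence $\cl_{t,n}^m f\le \mathcal{L}_t^m f$) and then extends to general $f$ by the split $f=f^+-f^-$; you instead prove $|\cl_{t,n}^m f|\le \mathcal{L}_t^m|f|$ directly for all $f$ by induction. These are minor reorganizations of the same argument.

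One correction: your closing remark that the $f^+/f^-$ split ``would produce an incorrect bound'' is unfounded. Since $\cl_{t,n}$ is linear and positivity-preserving, $\cl_{t,n}^m f=\cl_{t,n}^m f^+-\cl_{t,n}^m f^-$ with both terms nonnegative, so $\|\cl_{t,n}^m f\|_{L^1_2}\le \|\cl_{t,n}^m f^+\|_{L^1_2}+\|\cl_{t,n}^m f^-\|_{L^1_2}$, and the nonnegative case applies to each piece. This is exactly the paper's route and it works; there is no need to avoid it.
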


We first consider the simplified case in which $f\geq 0.$

\begin{lemma}
\label{correz}There are $\lambda <1,$ and $A$ and $B$ positive constants, such that for each  $f\in
L_{2 }^{1}$ with $f\geq 0$, $m,n\geq 1$%
\begin{equation}
||\cl_{t,n }^{m}f||_{L_{2}^{1}}\leq A\lambda
^{m}||f||_{L_{2}^{1}}+B||f||_{L^{1}}.  \label{2332}
\end{equation}
\end{lemma}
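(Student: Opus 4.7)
\medskip

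\noindent\textbf{Proof proposal.} The plan is to reduce the estimate for the perturbed operator $\mathcal{L}_{t,n}$ to the Lasota--Yorke inequality already proved for the unperturbed $\mathcal{L}_t$ in Lemma \ref{lem:pbound}, by exploiting positivity and the fact that the hole can only \emph{decrease} the density pointwise. Recall that the perturbation acts by multiplication: $\mathcal{L}_{t,n} g = 1_{B_n^c}\,\mathcal{L}_t g$ for every $g \in L^1$. Since $0 \leq 1_{B_n^c} \leq 1$ and $\mathcal{L}_t$ preserves positivity (being a Markov operator with non-negative kernel $S_t$), we have, for any non-negative $g$,
\[
0 \;\leq\; \mathcal{L}_{t,n} g \;=\; 1_{B_n^c}\,\mathcal{L}_t g \;\leq\; \mathcal{L}_t g \quad \text{a.e.}
\]

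The first step is to upgrade this to iterates by induction on $m$. Assuming $0 \leq \mathcal{L}_{t,n}^{m-1} f \leq \mathcal{L}_t^{m-1} f$ pointwise, which holds for $m=1$ by the above, we apply $\mathcal{L}_t$ to both sides of the inequality; since $\mathcal{L}_t$ is a positive linear operator it preserves the pointwise order, so
\[
\mathcal{L}_t\bigl(1_{B_n^c}\,\mathcal{L}_{t,n}^{m-1} f\bigr) \;\leq\; \mathcal{L}_t\bigl(\mathcal{L}_{t,n}^{m-1} f\bigr) \;\leq\; \mathcal{L}_t^{m} f.
\]
Multiplying the leftmost member by the indicator $1_{B_n^c}$ (which lies between $0$ and $1$) can only decrease the value, yielding $0 \leq \mathcal{L}_{t,n}^m f \leq \mathcal{L}_t^m f$ a.e.

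For the second step, integrate against the strictly positive weight $\rho_2(|x|) = 1 + |x|^2$. From the pointwise domination and non-negativity we obtain
\[
\|\mathcal{L}_{t,n}^m f\|_{L^1_2} \;=\; \int_{\mathbb{R}^d} \rho_2(|x|)\, \mathcal{L}_{t,n}^m f(x)\, dx \;\leq\; \int_{\mathbb{R}^d} \rho_2(|x|)\, \mathcal{L}_t^m f(x)\, dx \;=\; \|\mathcal{L}_t^m f\|_{L^1_2}.
\]
Now Lemma \ref{lem:pbound} (applied to $\mathcal{L}_t^m$) gives constants $A, B > 0$ and $\lambda \in (0,1)$, independent of $n$ and $m$, such that $\|\mathcal{L}_t^m f\|_{L^1_2} \leq A\lambda^m \|f\|_{L^1_2} + B\|f\|_{L^1}$, which is exactly \eqref{2332}.

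There is no real obstacle in this step: the key point is simply that the perturbation is of the multiplicative, norm-contracting type $g \mapsto 1_{B_n^c} g$, so positivity transfers the weak-norm Lasota--Yorke bound from $\mathcal{L}_t$ to $\mathcal{L}_{t,n}$ uniformly in $n$. The sign assumption $f \geq 0$ is used only to make the pointwise comparison meaningful; the extension to signed $f$ (Proposition \ref{correz1}) will then follow, as in Step~1 of Lemma \ref{lem:pbound}, by splitting $f = f^+ - f^-$ and applying the lemma separately, at the cost of doubling the constants.
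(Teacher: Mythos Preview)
Your argument is correct and follows essentially the same route as the paper: both establish the pointwise domination $0\le \mathcal L_{t,n}^{m}f\le \mathcal L_t^{m}f$ for $f\ge 0$ (the paper phrases this as ``clearly $g_m:=\mathcal L_t^{m}f-\mathcal L_{t,n}^{m}f\ge 0$'' without writing out the induction you supply), then integrate against $\rho_2$ and invoke Lemma~\ref{lem:pbound}. Your inductive step is slightly over-elaborate (the inner factor $1_{B_n^c}$ is redundant since $\mathcal L_{t,n}^{m-1}f$ is already supported on $B_n^c$ for $m\ge 2$), but the reasoning is sound.
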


\begin{proof} 
Consider $f\geq 0$.  We have that $\cl_{t,n}f=1_{B^c_n}\mathcal{L}_t(f)$ and 
$\mathcal{L}_t f = \cl_{t,n}f+1_{B_n} \mathcal{L}_t(f)$. 
We define the sequence $g_n$  
  in the following way: $g_{m}=\mathcal{L}_t^{m}f- \cl_{t,n}^{m}f$.

Since $f\geq 0$ and clearly also $g_n\geq 0$ we have that 
\begin{equation*}
\|\cl_{t,n }^{m}f\|_{L_{2}^{1}}=\int |\cl_{t,n}^{m}f|\rho_2 dx\leq
\int (\cl_{t,n
}^{m}f+g_{m})\rho_2 dx=
||\mathcal{L}_t^{m}f||_{L_{2}^{1}},
\end{equation*}
by Lemma \ref{lem:pbound} we have then $(\ref{2332})$.
\end{proof}

\begin{proof}[Proof of Proposition \protect\ref{correz1}.]

Let $f=f^{+}-f^{-}$ be decomposed into its positive and negative part (where $f^{+},f^{-}\geq 0$). It holds%
\begin{equation*}
||f||_{L_{2}^{1}}=\int |(f^{+}-f^{-})|\rho_2
dx=||f^{+}||_{L_{2}^{1}}+||f^{-}||_{L_{2}^{1}}
\end{equation*}%
and by Lemma \ref{correz} we get%
\begin{eqnarray*}
||\cl_{t,n}^{m}f||_{L_{2}^{1}} &\leq &2A\lambda
^{n}(||f^{+}||_{L_{2}^{1}}+||f^{-}||_{L_{2}^{1}})+2B(||f^{+}||_{L^{1}}+||f^{-}||_{L^{1}})
\\
&\leq &2A\lambda ^{n}||f||_{L_{2}^{1}}+2B||f||_{L^{1}}.
\end{eqnarray*}
\end{proof}

\begin{proof}[ Proof of Proposition \ref{prop:verR3_Rd}]

The $BV_2$ norm is the sum of the oscillation part and of the $L^1_2$ part. For the latter we use Lemma \ref{correz}. For the oscillation we  integrate  \cite[Proposition 3.2 (ii)]{saussol00} with respect to our norm. Namely 

\begin{equation}\label{28}
\|1_{B^c_{n}}\mathcal{L}_tf\|_{osc}\le \sup_{0<\eta\le 1}\frac{1}{\eta}\int Osc(\mathcal{L}_tf, B_n^c\cap B_{\eta}(x)) {\bf 1}_{B_n^c}(x)d\psi(x)+
\end{equation}
\begin{equation}\label{ooji}
\sup_{0<\eta\le 1}\frac{1}{\eta}\int 2\left[  {\sup}_{B_{\eta}(x)\cap B^c_n} |\mathcal{L}_t f|\right]{\bf 1}_{B_{\eta}(B_n)\cap B_{\eta}(B_n^c)}(x) d\psi(x),
\end{equation} where $B_{\eta}(x)$ is a ball centered at $x$ and with radius $\eta$ and given a set $S,$ $B_{\eta}(S)=\{x; \text{dist}(x, S)\le \eta\}.$ There are now two cases. We suppose first than $\eta<e^{-u_n};$ in this case 
 the only points $x$ contributing to the rightmost integral in the previous inequality, are those belonging to a $2\eta$-closed neighborhood of the boundary of the ball $B_{n};$  we call $S_n$ such an annulus. 
The Lebesgue  measure of $S_n$ will be bounded 
 by a constant $\tilde{C}$ (depending on $d$) times $\eta.$\footnote{The volume of $S_n$ is bounded by  the volume of the corresponding  annulus of size $2\eta$  around an hypersphere of radius $1.$ In this case  the lowest order of such a volume is   $\eta$ times a constant depending on $d.$} In the second case, $\eta\ge e^{-u_n}$ only the points  belonging to $B_{2\eta}(B_n)$ will contribute to the integral and the measure of these points  is the volume of the hyper sphere of radius $\eta$ which is $O(\eta^d).$ 
 The term $\sup_{B_{\eta}(x)\cap B^c_n}|\mathcal{L}_tf|$ will be bounded using Lemma \ref{imm} 
 and \ref{lem:Rdregularization}, therefore 
by calling $\mathcal{K}_n$ the closed ball of radius $2e^{-u_n}$ and by using Proposition \ref{imm}, 
we  continue to bound the quantity in (\ref{ooji}) as

\begin{equation}\label{ttt}
\sup_{0<\eta\le 1}\frac{\tilde{C}\eta}{\eta} ||\mathcal{L}_tf||_{L^{\infty} (\psi, S_n)}\psi(S_n)\le
2\hat{C}\tilde{C} ||f||_{L^1_2(\mathbb{R}^d)}||\psi'||_{L^{\infty}(\mathbb{R}^d)},
\end{equation}
where we used the bound of order $\eta$ given by the first case, which includes also  the second case of higher order $\eta^{d-1}$.

The constant $\hat{C}$ maximizes  the constant on the right hand side of (\ref{in}), depending on $\psi$
and on its strictly positive infimum over $S_n$, and the constant entering formula (\ref{eq:RdstrongLY}).

The right-hand side of (\ref{28}) is bounded by $\|\mathcal{L}_t f\|_{osc}$ and the latter is again bounded as in (\ref{eq:RdstrongLY}). Therefore we get:
$$
||1_{B^c_{n}}\mathcal{L}_t f||_{osc}\le  (\hat{C}+2\hat{C}\tilde{C}||\psi'||_{L^{\infty}(\mathbb{R}^d)})||f||_{L^1_2(\mathbb{R}^d)}.$$

If we now iterate this one we have, calling $c^{*}=\hat{C}+2\hat{C}\tilde{C}||\psi'||_{L^{\infty}(\mathbb{R}^d)},$ it holds 
$$
||\cl_{t,n}^mf||_{osc}\le ||\cl_{t,n}L_{t,n}^{m-1}f||_{osc}\le c^* ||\cl_{t,n}^{m-1}f||_{L^1_2({\mathbb{R}^d})}
$$
and using Lemma \ref{correz} we continue as
$$
||\cl_{t,n}^mf||_{osc}\le c^* A\lambda^{m-1}||f||_{L^1_2({\mathbb{R}^d})}+c^* B||f||_{L^1({\mathbb{R}^d})}
$$
If we now define $A'=\max(c^*/\lambda, A), B'=\max(c^*,B),$ we finally have
\begin{equation}\label{jh}
||\cl_{t,n}^m f||_{BV_2}\le A' \lambda^{m}||f||_{L^1_2(\mathbb{R}^d)}+B'||f||_{L^1({\mathbb{R}^d})},
\end{equation}
which implies the desired result.\\
\end{proof}

\section{Rare Events Via Transfer Operator} \label{app:repfo}

For completeness, we remind here the statement of a result due to  Keller and Liverani \cite{KL09} which is fundamental to our general construction.  
We consider a Banach spaces $(\cB, \|\cdot\|)$ and we  denote with $\cB^*$ its dual.
Then let $\cL_{\epsilon} : \cB \to \cB $ be a family of uniformly bounded   linear operators, where $\epsilon\in E,$ and $E$ is the interval $E=(0,\overline{\epsilon}]$ for some $\overline{\epsilon}>0.$\footnote{In general the set $E$ could  be taken as a closed set of parameters with $\epsilon=0$ as an accumulation point. Moreover, as Keller wrote in \cite{Keller12} "It
is assumed that $E$ is a closed subset of $\mathbb{R}$, but again the parameter $\epsilon$ enters the estimates only via
the derived quantities $\pi_{\epsilon}$ and $\Delta_{\epsilon}$  ({\em see our conditions R2 and R3 below}), so that also this result is valid for more
general sets of parameters."}

\subsubsection{Perturbative hypotheses} \label{sec:perturbativereminder}

\begin{itemize}
\item[R1] The operators $\cl_{\epsilon},$ $\epsilon \in E,$ must satisfy  the spectral decomposition  
\begin{equation}\label{specdec}
\lambda_{\epsilon}^{-1}\cL_{\epsilon}=\varphi _{\epsilon}\otimes \nu _{\epsilon}+Q_{\epsilon}
\end{equation}
where $\lambda_{\epsilon}\in \mathbb{C},$ $\varphi _{\epsilon}\in  \cB ,\nu_{\epsilon}\in \cB^{*},$  $Q_{\epsilon}: \cB \rightarrow \cB$ is a linear operator verifying 
\begin{equation}\label{qqcc}
    \sum_{n=0}^{\infty}\sup_{\epsilon\in E}\|Q_{\epsilon}^n\|<\infty.
\end{equation}

Moreover, $\cl_{\epsilon}\varphi_{\epsilon}=\lambda_{\epsilon} \varphi_{\epsilon},$ $\nu_{\epsilon}\cl_{\epsilon}=\lambda_{\epsilon}\nu_{\epsilon},$ $\nu_{\epsilon}(\varphi_{\epsilon})=1,$ $\nu_{\epsilon}Q_{\epsilon}=0,$ $Q_{\epsilon}(\varphi_{\epsilon})=0.$\label{eq:B5}

We also require that $\nu_{0}(\phi_{\epsilon})=1$ and
\begin{equation}\label{coon}
\sup_{\epsilon \in E}\|\phi_{\epsilon}\|<\infty.
\end{equation}
\end{itemize}
\begin{itemize}
\item[R2] When $\varepsilon $ is small, $\cL_{\varepsilon }$ is a small
perturbation of $\cL_{0}$, in the following sense: 

\begin{equation*}
\pi_{\epsilon}:=\sup_{f\in \mathcal{B}, \|f\|\le 1}|\nu_0((\cl_0-\cl_{\epsilon})(f))|\rightarrow 0, \ \epsilon\rightarrow 0.
\end{equation*}
\end{itemize}

\begin{itemize}
\item[R3]
We now set
$$
\Delta_{\epsilon}:=\nu_0((\cl_0-\cl_{\epsilon})(\phi_0)). 
$$ Then we require that 
\begin{equation*}
\pi_{\epsilon}\|(\cL_0-\cL_{\epsilon})\varphi_0\|\le \ \text{constant} \ |\Delta_{\epsilon}|.
\end{equation*}

where the constant is independent of $\epsilon.$
\end{itemize}

\begin{itemize}
    \item[R4] 
Let us consider the following quantities
\begin{equation*}
q_{k,\varepsilon }:=\frac{\nu _{0}((\cL_{0}-\cL_{\varepsilon })\cL_{\varepsilon
}^{k}(\cL_{0}-\cL_{\varepsilon })(\varphi_0))}{ \Delta_{\epsilon}}.
\end{equation*}
We will assume  that for each $k\geq 0$ the following limit exists
\begin{equation}\label{eee}
\lim_{\epsilon \rightarrow 0}q_{k,\varepsilon }=q_k,
\end{equation}
and we define the \emph{extremal index of the system} as \begin{equation}\label{extremal}    
\theta=1-\sum_{k=0}^{\infty}q_k.
\end{equation}
\end{itemize}

\begin{proposition}[Proposition\cite{KL09}] \label{thm:repfo}
If $R1-R4$ are satisfied then 
\begin{equation} \label{eq:repfoexp}
\lambda_{\epsilon} = 1 - \theta \Delta_{\epsilon} + o( \Delta_{\epsilon}). \end{equation}
\end{proposition}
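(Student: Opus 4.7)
The plan is to combine the eigenvalue equation with an iterated (telescoping) identity that exposes the quantities $q_{k,\epsilon}$ one by one, and then to extract the leading asymptotic using R1--R4. First, apply $\nu_0$ to both sides of the eigenvalue equation $\mathcal{L}_\epsilon \varphi_\epsilon = \lambda_\epsilon \varphi_\epsilon$. The spectral decomposition in R1 at $\epsilon = 0$ forces $\lambda_0 = 1$ and $\nu_0 \mathcal{L}_0 = \nu_0$, while R1 also supplies the normalization $\nu_0(\varphi_\epsilon) = 1$, so rewriting $\mathcal{L}_\epsilon = \mathcal{L}_0 - (\mathcal{L}_0 - \mathcal{L}_\epsilon)$ yields the key identity
\[
\lambda_\epsilon - 1 = -\nu_0\bigl((\mathcal{L}_0 - \mathcal{L}_\epsilon)\varphi_\epsilon\bigr).
\]

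Next I would iterate. The spectral decomposition gives $\mathcal{L}_\epsilon^n \varphi_0 = \lambda_\epsilon^n\bigl(\nu_\epsilon(\varphi_0)\varphi_\epsilon + Q_\epsilon^n \varphi_0\bigr)$, while the telescoping identity $\mathcal{L}_\epsilon^n - \mathcal{L}_0^n = \sum_{k=0}^{n-1} \mathcal{L}_\epsilon^k(\mathcal{L}_\epsilon - \mathcal{L}_0)\mathcal{L}_0^{n-1-k}$, combined with $\mathcal{L}_0 \varphi_0 = \varphi_0$, gives
\[
\mathcal{L}_\epsilon^n \varphi_0 = \varphi_0 - \sum_{k=0}^{n-1}\mathcal{L}_\epsilon^k(\mathcal{L}_0 - \mathcal{L}_\epsilon)\varphi_0.
\]
Equating the two expressions for $\mathcal{L}_\epsilon^n \varphi_0$, applying $\nu_0(\mathcal{L}_0 - \mathcal{L}_\epsilon)(\cdot)$, using the definitions of $\Delta_\epsilon$ and $q_{k,\epsilon}$, and substituting the identity from the first step produces the master formula
\[
\lambda_\epsilon^n\,\nu_\epsilon(\varphi_0)(1 - \lambda_\epsilon) = \Delta_\epsilon\!\left(1 - \sum_{k=0}^{n-1} q_{k,\epsilon}\right) - \lambda_\epsilon^n\,\nu_0\bigl((\mathcal{L}_0 - \mathcal{L}_\epsilon)Q_\epsilon^n\varphi_0\bigr).
\]

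Finally, divide by $\Delta_\epsilon$ and pass to the limit. For any fixed $n$, as $\epsilon \to 0$ one has $\lambda_\epsilon^n \to 1$ and $\nu_\epsilon(\varphi_0) \to \nu_0(\varphi_0) = 1$ by the spectral stability that follows from R1--R2 and \eqref{coon}, while R4 gives $q_{k,\epsilon} \to q_k$ termwise. The remainder is estimated through R2 by $|\nu_0((\mathcal{L}_0 - \mathcal{L}_\epsilon)Q_\epsilon^n \varphi_0)| \leq \pi_\epsilon\,\|Q_\epsilon^n\|\,\|\varphi_0\|$, and R3 together with the summability in \eqref{qqcc} makes this $o(\Delta_\epsilon)$ once $n$ is chosen sufficiently large. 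Letting $\epsilon \to 0$ for large fixed $n$ and then $n \to \infty$ gives
\[
\frac{1-\lambda_\epsilon}{\Delta_\epsilon} \longrightarrow 1 - \sum_{k=0}^{\infty} q_k = \theta,
\]
which rearranges into \eqref{eq:repfoexp}.

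The main obstacle is the joint control of the two limits in the last step. The partial sums $\sum_{k=0}^{n-1}q_{k,\epsilon}$ are not a priori uniformly convergent in $\epsilon$, and the remainder couples the three small quantities $\pi_\epsilon$, $\|Q_\epsilon^n\|$ and $\Delta_\epsilon$. The structural role of R3 is precisely to ensure that $\pi_\epsilon\|(\mathcal{L}_0 - \mathcal{L}_\epsilon)\varphi_0\|$ is comparable to $\Delta_\epsilon$, so that after division by $\Delta_\epsilon$ the error remains genuinely $o(1)$ rather than $O(1)$. Technically this is handled by allowing $n$ to grow with $\epsilon$ at an appropriate rate, and it is this balancing which binds the four hypotheses together.
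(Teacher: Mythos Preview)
The paper does not give its own proof of this proposition: it is quoted verbatim as a result of Keller and Liverani \cite{KL09}, so there is no in-paper argument to compare against. Your sketch is, in outline, the actual Keller--Liverani argument: the starting identity $1-\lambda_\epsilon=\nu_0((\mathcal{L}_0-\mathcal{L}_\epsilon)\varphi_\epsilon)$, the telescoped expansion of $\mathcal{L}_\epsilon^n\varphi_0$ exposing the $q_{k,\epsilon}$, and the use of R3 to tame the $Q_\epsilon^n$ remainder after division by $\Delta_\epsilon$ are precisely the bones of their proof. Your final paragraph also correctly pinpoints the one genuinely delicate point, namely that the double limit $\epsilon\to0$, $n\to\infty$ must be handled by letting $n=n(\epsilon)$ grow at a controlled rate, and that R3 is what makes the ratio $\pi_\epsilon\|Q_\epsilon^n\varphi_0\|/\Delta_\epsilon$ small; in \cite{KL09} this is done carefully and is where most of the work lies, so your sketch is honest in flagging it rather than glossing over it.
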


\subsubsection{Sufficient conditions to check assumptions R1 and R2}

\label{ssaa}
We now give sufficient conditions to check R1 and we will show how to rewrite R2 in a form adapted to our current setting. We begin to introduce a second Banach space $(\cB_w, \|\cdot\|_w)$ which we qualify as {\em weak} when compared with the {\em strong} Banach space $\cB.$ We will first suppose that
\begin{itemize}
\item[R0'] For all $ f \in \cB $ , $ \| f \|_w \leq \| f \|$ i.e. the weak norm is bounded by the strong norm and 
the unit ball of $\cB$ is compact in $\cB_w$. Moreover  $\exists G\geq 0$ s.t.
\begin{equation*}
\forall \epsilon \in E\ \forall f\in \cB,\ \forall n\in \mathbb{N}%
:\| \cL_{\epsilon }^{n}f\|_{w}\leqslant G\,\|f\|_{w}.  \label{eq:B1}
\end{equation*}
\end{itemize}
We then require that R1 be satisfied for $\epsilon=0.$ To achieve the same result for $\epsilon>0$ we need two ingredients which will allow us to apply another perturbative result by Keller and Liverani, \cite{KL2}.
The first ingredient asks that:
\begin{itemize}
\item[R1'] The operators $\cL_{\epsilon}$ satisfy a uniform, with respect to $\epsilon\in E$,  Lasota-Yorke (or Doeblin-Fortet) inequality:  there exists $\alpha \in (0,1)  ,D > 0 $ such that
\begin{equation} \label{eq:B2}
\forall \epsilon \in E\ \forall f\in \cB\ \forall n\in \mathbb{N}:\Vert
\cl_{\epsilon }^{n}f\Vert \leqslant D\alpha ^{n}\Vert f\Vert +D\|f\|_{w}
\end{equation}
\end{itemize}
Finally we ask the closeness of the operators in the so-called triple norm $|||\cdot|||$, namely there exists an upper semi-continuous function $u_{\epsilon}: [0, \infty)\rightarrow [0, \infty),$ $u_{\epsilon}>0,$ such that
\begin{itemize}
\item [R2']
\begin{equation}\label{vbn}
|||\cl_0-\cl_{\epsilon}|||:=\sup_{f\in \cB, \|f\|\le 1}\|(\cl_0-\cl_{\epsilon})(f)\|_w\le u_{\epsilon}\rightarrow 0, \ \epsilon\rightarrow 0.
\end{equation}
    \end{itemize}
  By assuming the previous conditions,  it now follows from \cite{KL2}, see also \cite{Keller12}, that the {\em quasi-compactness condition} (\ref{specdec}) holds for any $\epsilon \in E, \epsilon>0.$ Moreover there will be $0<\rho<1$ such that for any $\epsilon\in E,$ the spectral radius of $Q_{\epsilon}$ is bounded by $\rho,$ which implies (\ref{qqcc}).  Finally (\ref{coon}) follows immediately from (\ref{eq:B2}).\\

  Condition R2 could be easily worked out when the $\nu_0$ will be a measure because in this case we could write
  \begin{equation}\label{rtey}
  \pi_{\epsilon}\le \sup_{f\in \mathcal{B}, \|f\|\le 1}\int |(\cl-\cl_{\epsilon})(f)|d\nu_0.
  \end{equation}
If the weak norm is strong enough to bound this integral, condition R2' implies condition R2. More precisely, if we have that there is some $C$ such that 
$$\int |(\cl-\cl_{\epsilon})(f)|d\nu_0\leq C\|(\cl_0-\cl_{\epsilon})(f)\|_w$$ then R2' implies  R2.
We will see that in our case this holds.

\subsection{Verifying the perturbative assumptions in our case}
\label{sec:estimatesper}
\label{sec:Rdperturbation}

In this section we verify the perturbative assumptions needed to apply  Proposition \ref{thm:repfo}.
In order to do this, we will get information on the spectral picture of $\mathcal{L}_t$ when applied to $BV_2$ and
 show that the  perturbed operators $\cl_{t,n}$ 
(see \eqref{def:twistedoperator}) can be seen as a small perturbations in some sense of the operator $\mathcal{L}_t$ (see \eqref{eq:stoctransfer}).

We denote again by $B_n $ the ball $B(x_0, \exp(-u_n))$ and we remind that 
\begin{equation*}
(\cL_{t,n}f)(x)=  1_{B_{n }^{c}}(x) (\mathcal{L}_t f ) (x).
\end{equation*}
We now check that the perturbative assumptions listed in Sections \ref{sec:perturbativereminder} and \ref{ssaa} apply to these operators choosing $L^1$ as the weak norm and $ BV_{2}$ as the strong one.

\noindent {\bf Assumption R0'} comes directly from the definitions of the norms given in Section \ref{sec:spacesRD}, from the compact embedding result  proved at Theorem \ref{thm:embeddingRD} and finally from Lemma \ref{7}

\noindent {\bf Assumption R1 }. We first prove it for $\epsilon=0,$ then we extend it to $\epsilon \in E, \epsilon>0$ once R1' will be  proved, see below. Assumption R1 for $\epsilon=0$ is verified by classical results on the spectral picture of regularizing linear opertators.
The compact immersion proved at Theorem \ref{thm:embeddingRD} and the Lasota Yorke inequality proved in Lemma \ref{200} for $\cl_t,$ allows us to apply  the Ionescu-Tulcea-Marinescu theorem  (see for instance \cite{HH}), and therefore the  operator $\mathcal{L}_t$ has the following spectral decomposition $\mathcal{L}_t=\sum_{i}\upsilon_i\Pi_i+Q,$
where all $\upsilon_i$ are eigenvalues of $\cl_t$ of modulus $1$, $\Pi_i$ are finite-rank projectors onto the associated
eigenspaces, $Q$ is a bounded operator with a spectral radius strictly less than $1$. They satisfy
$\Pi_i \Pi_j = \delta_{\delta_{ij}}\Pi_i, Q\Pi_i = \Pi_iQ = 0$.

By this theorem we also get that   $1$ is an eigenvalue and  therefore the transfer operator will admit finitely many  absolutely continuous stationary measures. Furthermore, the peripheral spectrum is completely cyclic. Then we need to show that $1$ is a simple eigenvalue of
$\mathcal{L}_t$ and that there is no other peripheral eigenvalue. This is achieved by the strict positivity of the Markov kernel $S_t$ (see \eqref{eq:stoctransfer}) provided  by Theorem \ref{MPZ}. \footnote{Here is the argument. Consider first the operator
  $
  \cl_t f(y)=\int S_t(x,y)f(x)dx,
  $
  where $S_t(\cdot, \cdot)$ is the Markov kernel defined on $\mathbb{R}^2$ and is bounded from below on any compact set $K\in \mathbb{R}^2.$ We want to show that there exists only one fixed point for $\cl_t.$ Suppose there are two, say $h_1, h_2.$ Define
  $
  \hat{h}=\min(h_1, h_2).
  $
  Notice that $h_1-\hat{h}$ is nonnegative on a set $\Omega$ of positive Lebesgue measure; moreover from a classical trick, we have easily that
  $
  \cl_t(h_1-\hat{h})=h_1-\hat{h}.
  $
  Take now a sequence of monotonically increasing compact sets $K_n$ such that the first verifies $K_1\subset \Omega,$ $\text{Leb}(K_1\cap \Omega)>0$ and  $\bigcup_n K_n=\mathbb{R}^2$ (use the regularity of the Lebesgue measure).
  Then take $y\in K_n$ and we get
  $
  h_1(y)-\hat{h}(y)\ge \int_{x\in K_n\cap \Omega}S_t(x,y) (h_1(x)-\hat{h}(x))dx.
  $
  Notice that the right hand side is strictly positive. So we have that
  $
  h_1(y)>h_2(y), \ y\in K_n\ \Rightarrow  \int_{K_n}h_1 dx> \int_{K_n}h_2 dx.
    $
    By passing to the limit we have $ \int h_1 dx > \int h_2 dx,$
    which is impossible. Call  $h^*$ the unique fixed point of $\cl_t.$ Consider now the peripheral spectrum of $\cl_t$ which  consists of a finite union of finite cyclic groups. For each of those eigenvalues call $g$ one of the corresponding eigenvector. 
There exists $k \ge 1$ such that $1$ is the unique peripheral eigenvalue of $\cl_t^k:$ $\cl_t^kg=g$.   But $\cl_t^ kh^*=h^*,$ and by repeating the proof above we see that $g=h^*.$ 
}Notice that the projector  $\Pi_1$ will be the linear functional $\nu_0$ in the assumption R1.

 \noindent {\bf Assumption R1'} verified in Proposition \ref{prop:verR3_Rd}.

\noindent {\bf Assumption R2'}  is verified in the following proposition

\begin{proposition}\label{prop:R5}
The operator  $\cL_{t,n}$ is a small
perturbation of $\cL_{t}$, in the following sense: there is a monotone sequence $\pi_{n} \to 0$such
that 
\begin{equation*}||\cL_{t,n}-\cL_{t}||_{BV_{2}\to L^1}\leq \pi_n.
\end{equation*}

\end{proposition}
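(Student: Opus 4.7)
The plan is to observe that the difference $\mathcal{L}_t - \mathcal{L}_{t,n}$ is the multiplication by the indicator of $B_n$, and then to exploit the $C^1$-regularization proved in Lemma \ref{lem:Rdregularization} to bound this difference on a shrinking ball.

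More precisely, by the definition \eqref{def:twistedoperator} of the perturbed operator, for every $f\in BV_2$ we have
\begin{equation*}
(\mathcal{L}_t f - \mathcal{L}_{t,n} f)(x) = \mathbf{1}_{B_n}(x)\,(\mathcal{L}_t f)(x),
\end{equation*}
so that
\begin{equation*}
\|(\mathcal{L}_t - \mathcal{L}_{t,n}) f\|_{L^1(\mathbb{R}^d)} \leq \mathrm{Leb}(B_n)\,\|\mathcal{L}_t f\|_{L^\infty(B_n)}.
\end{equation*}
By Lemma \ref{lem:Rdregularization}, $\mathcal{L}_t f$ lies in $C^1(\mathbb{R}^d)$ with $\|\mathcal{L}_t f\|_{C^1} \leq C_t\,\|f\|_{L^1}$, and since $\|f\|_{L^1}\leq \|f\|_{L^1_2}\leq \|f\|_{BV_2}$ we obtain
\begin{equation*}
\|\mathcal{L}_t f\|_{L^\infty(B_n)}\leq C_t\,\|f\|_{BV_2}.
\end{equation*}

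The first key step is then to note that $B_n = B(x_0, e^{-u_n})$ has Lebesgue measure $\omega_d\,e^{-d u_n}$, where $\omega_d$ is the volume of the unit ball in $\mathbb{R}^d$. Since in our setting $n\,\mu(B_n)\to\tau$ forces $\mu(B_n)\to 0$ and hence $e^{-u_n}\to 0$, we get $\mathrm{Leb}(B_n)\to 0$. Combining the two bounds,
\begin{equation*}
\|(\mathcal{L}_t - \mathcal{L}_{t,n}) f\|_{L^1(\mathbb{R}^d)} \leq C_t\,\omega_d\, e^{-d u_n}\,\|f\|_{BV_2},
\end{equation*}
and taking the supremum over $f$ with $\|f\|_{BV_2}\leq 1$ yields the claim with $\pi_n := C_t\,\omega_d\,e^{-d u_n}$. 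If $u_n$ is not itself monotone, replace $\pi_n$ by its monotone decreasing envelope, which still tends to zero.

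There is no real obstacle here: the content is entirely in the regularization estimate \eqref{C1} of Lemma \ref{lem:Rdregularization}, which is what converts the merely $BV_2$ control of $f$ into an $L^\infty$ control of $\mathcal{L}_t f$, so that multiplication by $\mathbf{1}_{B_n}$ produces a small $L^1$ contribution proportional to $\mathrm{Leb}(B_n)$. The only thing to keep in mind is that the rate $\pi_n$ decays like $e^{-d u_n}$, which is much faster than the rate $\mu(B_n)$ with which the hole closes; this will be relevant later when verifying assumption R3 (the comparison of $\pi_\epsilon$ with $\Delta_\epsilon$), but it is not needed for the present proposition.
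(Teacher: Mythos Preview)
Your proof is correct and follows essentially the same approach as the paper: both compute $(\mathcal{L}_t-\mathcal{L}_{t,n})f = 1_{B_n}\mathcal{L}_t f$, bound its $L^1$ norm by $\mathrm{Leb}(B_n)\|\mathcal{L}_t f\|_\infty$, and invoke Lemma~\ref{lem:Rdregularization} to control $\|\mathcal{L}_t f\|_\infty$ by $C_t\|f\|_{L^1}$, arriving at $\pi_n = C_t\,\mathrm{Leb}(B_n)$. Your version is slightly more explicit (writing out the volume, handling monotonicity, noting the chain $\|f\|_{L^1}\le\|f\|_{BV_2}$), but the argument is identical in substance.
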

\begin{proof}
We have
$||\cL_{t,n}-\cL_{t}||_{L^1}=\|1_{B_{n}}(\mathcal{L}_tf)\|_{L^{1}} \leq \text{Leb}(B_n)||\mathcal{L}_t f||_\infty.$  Notice that by Lemma \ref{lem:Rdregularization} the $C^1(\mathbb{R})$ norm of $\mathcal{L}_tf,$ and therefore its $L^\infty$ norm is bounded by $C_t\|f\|_{L^1(\mathbb{R}^d)}$. We could finally take $\pi_n=C_t\text{Leb}(B_n).$\\

\end{proof}
Since in our case $\nu_0$ is nothing but the integral with respect to the Lebesgue measure, and the weak norm is the $L^1$ norm, R2' implies R2 as remarked at the end of Section \ref{ssaa}.

\noindent {\bf Assumption R3} in our setting  can be stated in the following way:

\begin{proposition}\label{prop:R6}
  Let $f_0$  denote the invariant density of the invariant measure $\mu;$
  then there exists a constant $C'$ such that
  $$
  \pi_n\|(\mathcal{L}_t-\cL_{t,n})f_0\|_{BV_2}\le C' \mu(B_n).
$$
\end{proposition}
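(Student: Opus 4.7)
The plan is to start by using the invariance of $f_0$: since $\mathcal{L}_t f_0 = f_0$, we get the clean identity
$$(\mathcal{L}_t - \cL_{t,n}) f_0 = f_0 - 1_{B_n^c} f_0 = 1_{B_n} f_0,$$
so the task reduces to showing $\pi_n \|1_{B_n} f_0\|_{BV_2} \le C' \mu(B_n)$. Recall that $\pi_n = C_t \operatorname{Leb}(B_n)$ by Proposition \ref{prop:R5}. Since $f_0$ is $C^1$ and strictly positive everywhere (continuity follows from Lemma \ref{lem:Rdregularization}, and strict positivity from the strict positivity of the kernel $S_t$ in Theorem \ref{MPZ}), for all $n$ large enough one has $\operatorname{Leb}(B_n) \le 2 \mu(B_n)/f_0(x_0)$. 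Thus the problem further reduces to proving a uniform bound $\|1_{B_n} f_0\|_{BV_2} \le C''$ independent of $n$.

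I would split the $BV_2$-norm into its two pieces. For the weighted $L^1_2$ part, since $B_n$ shrinks to $\{x_0\}$, the weight $\rho_2$ is uniformly bounded on $B_n$ by $\rho_2(|x_0|+1)$ for $n$ large, so
$$\|1_{B_n} f_0\|_{L^1_2} \le \rho_2(|x_0|+1)\, \mu(B_n),$$
which is even smaller than the bound we need.

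The bulk of the work is the oscillation seminorm $\|1_{B_n} f_0\|_{osc}$. Following the decomposition employed in the proof of Proposition \ref{prop:verR3_Rd}, for each $\epsilon \in (0,1]$ I would split the integral into an interior part, where $B_\epsilon(x) \subset B_n$ and the $C^1$-regularity of $f_0$ gives $\operatorname{osc}(f_0,B_\epsilon(x)) \le 2\epsilon\|f_0\|_{C^1}$; an exterior part, where the oscillation of $1_{B_n} f_0$ vanishes; and a boundary part, where $\operatorname{osc}(1_{B_n} f_0, B_\epsilon(x)) \le \|f_0\|_\infty$. For $\epsilon < e^{-u_n}$ the boundary region is an annulus of thickness $2\epsilon$ around $\partial B_n$ of Lebesgue measure at most $\tilde C \epsilon (e^{-u_n})^{d-1}$, contributing at most a constant multiple of $\|f_0\|_\infty \|\psi'\|_\infty (e^{-u_n})^{d-1}$; for $\epsilon \ge e^{-u_n}$ the set of $x$ with $B_\epsilon(x) \cap B_n \neq \emptyset$ lies in $B_{2\epsilon}(x_0)$, of Lebesgue measure $\le (2\epsilon)^d$, contributing $\le 2^d \|f_0\|_\infty \|\psi'\|_\infty \epsilon^{d-1}$. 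In both regimes the bound is uniform in $n$ and in $\epsilon \in (0,1]$ (since $d\ge 1$), so $\|1_{B_n} f_0\|_{osc} \le C''$ independently of $n$.

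The main obstacle is the boundary contribution in the oscillation estimate, since a priori this is controlled only by a constant rather than by $\mu(B_n)$; the key quantitative point is that after multiplying by $\pi_n \sim \operatorname{Leb}(B_n) \lesssim \mu(B_n)$ one gains exactly the factor needed to match the right-hand side. Putting everything together gives
$$\pi_n \|(\mathcal{L}_t-\cL_{t,n}) f_0\|_{BV_2} \le C_t \operatorname{Leb}(B_n)\cdot C'' \le \frac{2 C_t C''}{f_0(x_0)}\, \mu(B_n),$$
which is the desired conclusion.
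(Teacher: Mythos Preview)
Your proof is correct and follows essentially the same strategy as the paper: bound $\pi_n$ by $C_t\operatorname{Leb}(B_n)$, show the $BV_2$ norm of $(\mathcal{L}_t-\cL_{t,n})f_0=1_{B_n}\mathcal{L}_t f_0=1_{B_n}f_0$ is bounded uniformly in $n$, and finally convert $\operatorname{Leb}(B_n)$ to $\mu(B_n)$ via the strict positivity of $f_0$ near $x_0$ (the paper states this separately as Lemma~\ref{lem:Lemma3_posdensity}). The only cosmetic difference is that the paper obtains the uniform $BV_2$ bound by invoking the estimates in Proposition~\ref{prop:verR3_Rd}, whereas you redo the oscillation computation explicitly; the computations are the same in substance.
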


We saw above that $\pi_n$ could be taken as  $C_t\text{Leb}(B_n); $ 
moreover by Proposition \ref{prop:verR3_Rd} the quantity $\|(\mathcal{L}_t-\cL_{t,n})f_0\|_{BV_2}=\|1_{B_n}\mathcal{L}_t f_0\|_{BV_2}$ is bounded by a constant $\tilde{K}.$ Therefore $ \pi_n\|(\mathcal{L}_t-\cL_{t,n})f_0\|_{BV_2}\le C_t\tilde{K}\text{Leb}(B_n)\le C_t\tilde{K}\frac{\mu(B_n)}{\inf_{B_n}f_0},$ which gives the desired result since  the  density $f_0$ is strictly positive on any finite domain of $\mathbb{R}^d,$ as it is proved in the next lemma.

\begin{lemma} \label{lem:Lemma3_posdensity}
Let $f_{0}$ be the density of the unique invariant measure. Then%
\[
\inf_{x\in B}f_{0}\left(  x\right)  >0,
\]
where $B$ is any bounded subset of $\mathbb{R}^d.$\\

\end{lemma}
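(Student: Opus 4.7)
My plan is to combine invariance of $f_0$ with the lower Aronson-type bound on the kernel $S_t$ from Theorem \ref{MPZ} to get a uniform positive lower bound on any compact.

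First I would write down the fixed point identity: since $\mu$ is invariant and has density $f_0$, we have $\mathcal{L}_t f_0 = f_0$, i.e.
\[
f_0(y) = \int_{\mathbb{R}^d} S_t(x,y)\, f_0(x)\, dx \qquad \text{for a.e. } y \in \mathbb{R}^d.
\]
(In fact, by Lemma \ref{lem:Rdregularization}, the right-hand side belongs to $C^1$, so we may take this as an identity for every $y$.) Since $f_0$ is a probability density, $\int_{\mathbb{R}^d} f_0 = 1$, and by countable additivity there exists a closed ball $K \subset \mathbb{R}^d$ such that
\[
\int_K f_0(x)\, dx \geq \tfrac{1}{2}.
\]

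Next, fix a bounded set $B \subset \mathbb{R}^d$. The deterministic flow $\theta_t$ associated with the Lipschitz drift $b$ is continuous (indeed Lipschitz in $x$ by Gronwall), so $\theta_t(K)$ is compact. Consequently
\[
M := \sup\{\,|\theta_t(x) - y| : x \in K,\; y \in B\,\} < \infty.
\]
Applying the lower bound in item 1 of Theorem \ref{MPZ},
\[
S_t(x,y) \geq C_0^{-1}\, g_{\lambda_0^{-1}}(t, \theta_t(x) - y) \geq C_0^{-1}\, t^{-d/2}\, e^{-\lambda_0^{-1} M^2 / t} =: c_{B,K,t} > 0
\]
uniformly for $x \in K$ and $y \in B$.

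Combining the two estimates, for every $y \in B$,
\[
f_0(y) \;\geq\; \int_K S_t(x,y) f_0(x)\, dx \;\geq\; c_{B,K,t} \int_K f_0(x)\, dx \;\geq\; \tfrac{1}{2}\, c_{B,K,t} \;>\; 0,
\]
which yields $\inf_{x \in B} f_0(x) > 0$ as claimed.

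The only real subtlety is Step 2 (producing the compact $K$ where $f_0$ has non-trivial mass); but this is immediate from $\|f_0\|_{L^1} = 1$. The existence and uniqueness of $f_0$ itself, which is implicit in the statement, has already been established by the spectral argument used to verify \textbf{R1} in Section \ref{sec:estimatesper}, so no additional work is needed there.
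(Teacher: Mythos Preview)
Your proof is correct and follows essentially the same route as the paper: invariance of $f_0$ plus the lower Aronson bound from Theorem~\ref{MPZ}, restricted to a compact set carrying at least half the mass of $f_0$, to produce a uniform positive lower bound on $B$. You are slightly more explicit than the paper in justifying that $\theta_t(K)$ is compact (hence $M<\infty$) and that $f_0$ is continuous, but these only fill in details the paper leaves implicit.
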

\begin{proof}
For $t>0,$ part $1$ of Theorem \ref{MPZ} gives, for $(x,y)\in \mathbb{R}^d:$
$$
D(x,y):=C_{0}^{-1}g_{\lambda _{0}^{-1}}(t,\theta _{t}(x)-y)\leq S_t(x,y).
$$
The invariant measure $\mu = f_0 dx $ satisfies
$f_{0}\left(  x\right)  =\int_{\mathbb{R}^{d}}S_{t}\left(  x,y\right)
f_{0}\left(  y\right)  dy.$  Then take a compact $\hat{K}$ such that $\int_{\hat{K}}f_0dy>0.5,$ which is possible by the regularity of the Lebesgue measure.
Therefore, for $x\in B$
$$
f_{0}\left(  x\right)  \geq \min_{x\in B, y\in \hat{K}}D(x,y)\int_{\hat{K}}f_{0}(y)  dy>0.5 \min_{x\in B, y\in \hat{K}}D(x,y)
$$
and the minimum on $D$ is strictly positive by the smoothness of $g_{\lambda_0}^{-1}.$

\end{proof}
\noindent {\bf Assumption R4} needs some more work.  The next proposition will show that all the quantities $q_k$ defined in the Assumption R7 are equal to $0.$
In the present setting they are defined as the limit for $n\rightarrow \infty$ of the following quantities: 
$$
q_{k,n}=\frac{\int (\mathcal{L}_t-\cL_{t,n})\cL^k_{t,n}(\mathcal{L}_t-\cL_{t,n})(f_0)dm }{\mu(B_n)}
$$
where $m$ is the Lebesgue measure on $\mathbb{R}^d$.
We recall that in the previous definition $t$ is fixed by the initial choice of the time discretization  $t=h$, hence $q_{k,n}$ does not depend on $t$.
Before showing that all the $q_{k}$ are zero, we could easily prove that the extremal index $\theta$ is always  non-negative.
\begin{proposition}
 If the limits (\ref{eee}) exist, then 
 $$
 \sum_{k=0}^{\infty}q_k\le 1.
 $$
\end{proposition}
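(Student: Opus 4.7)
The key observation is that the numerator of $q_{k,n}$ admits a direct probabilistic reading: it is precisely the $\mathbb{P}\otimes\mu$-measure of the event ``starting from $x\in B_n$, the first discrete return to $B_n$ occurs exactly at time $(k+1)h$''. Since varying $k$ produces pairwise disjoint events contained in $\{x\in B_n\}$, the sum is bounded by $\mu(B_n)$, and normalization by $\mu(B_n)$ yields the desired bound.

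First I would use the invariance $\mathcal{L}_t f_0 = f_0$ to get
$(\mathcal{L}_t-\mathcal{L}_{t,n})(f_0) = 1_{B_n} f_0$, and note that the outer factor acts as $(\mathcal{L}_t-\mathcal{L}_{t,n})(g) = 1_{B_n}\mathcal{L}_t g$. Integrating against Lebesgue measure and iterating the duality relations \eqref{eq:stocduality} and \eqref{mod} (applying the duality $k+1$ times to move every copy of $\mathcal{L}_t$ or $\mathcal{L}_{t,n}$ to the Kolmogorov side), I would obtain
\begin{equation*}
\int (\mathcal{L}_t-\mathcal{L}_{t,n})\mathcal{L}_{t,n}^k(\mathcal{L}_t-\mathcal{L}_{t,n})(f_0)\,dm \;=\; \int_{B_n} f_0(x)\, P_{h,n}^{k}(P_h 1_{B_n})(x)\,dx.
\end{equation*}

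Next, Corollary \ref{cor:EtoTwistedP} (iterated $k$ times with constant step $h$) combined with the Markov property applied to the innermost factor $P_h 1_{B_n}$ identifies
\begin{equation*}
P_{h,n}^{k}(P_h 1_{B_n})(x) \;=\; \mathbb{E}\!\left[\prod_{j=1}^{k} 1_{B_n^c}(X_{jh}^{x})\cdot 1_{B_n}(X_{(k+1)h}^{x})\right],
\end{equation*}
so that
\begin{equation*}
q_{k,n} \;=\; \frac{1}{\mu(B_n)}\,\mathbb{P}\otimes\mu\Big(\big\{(x,\omega): x\in B_n,\; X_{jh}^{x}\notin B_n \text{ for } 1\le j\le k,\; X_{(k+1)h}^{x}\in B_n\big\}\Big).
\end{equation*}

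The events indexed by $k\ge 0$ are pairwise disjoint: the $k$-th event corresponds exactly to the first strictly positive discrete return time to $B_n$ equalling $(k+1)h$. Their union is contained in $\{x\in B_n\}\times\Omega$, hence for every finite $K$,
\begin{equation*}
\sum_{k=0}^{K} q_{k,n} \;\le\; \frac{\mu(B_n)}{\mu(B_n)} \;=\; 1.
\end{equation*}
Letting $n\to\infty$ at fixed $K$ (using the assumed existence of the limits $q_k$) gives $\sum_{k=0}^{K} q_k\le 1$, and then $K\to\infty$ gives $\sum_{k=0}^{\infty} q_k\le 1$. The only real work is the bookkeeping in the duality step that rewrites the mixed operator integral as a first-return probability; once that identity is in hand, the conclusion is immediate from the disjointness of first-return events.
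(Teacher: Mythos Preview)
Your proof is correct and follows essentially the same route as the paper: both use duality and the Markov property to identify the numerator of $q_{k,n}$ with the $\mathbb{P}\otimes\mu$-probability of the first discrete return to $B_n$ occurring at step $k+1$, then invoke the disjointness of these first-return events. Your presentation is slightly cleaner in two places: you work directly on the product space $\Omega\times\mathbb{R}^d$ (the paper instead fixes $\omega$, bounds the $\mu$-relative measure by $1$, then integrates over $\omega$), and you handle the passage to the limit explicitly via the finite-$K$ truncation, which the paper leaves implicit in the sentence ``it would be enough to prove that $\sum_{k=0}^{\infty}q_{k,n}\le 1$''.
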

\begin{proof}
Since the $q_{k,n}$ are non-negative it would be enough to prove that $\sum_{k=0}^{\infty}q_{k,n}\le 1$ almost surely.
We show how to compute explicitly $q_{2,n},$ the generalization to all $k>2$ being straightforward.
  We have
  $$
  q_{2,n}=\frac{\int (\mathcal{L}_t-\cL_{t,n}) \mathcal{L}^2_{t,n}(\mathcal{L}_t-\cL_{t,n})f_0(x)dx}{\mu(B_n)}.
  $$
  By using three times the duality between the operators $\mathcal{L}_t$ and $P_t$ and the fact that $f_0$ is the fixed point of $\mathcal{L}_t$ we have for the numerator :
  
$$
\int P_t[P_t[P_t(1_{B_n}) \cdot 1_{B_n^c}]\cdot 1_{B_n^c}](x)1_{B_n}(x)f_0(x) dx.
  $$

  At this point it easy to see that  Lemma 12 can be rewritten in terms of the operators $P_t$ and  it reads for any $\phi \in L^\infty\left( \bR^d \right)$:
 \[
P_{t}\left(  P_{s}\left(
\phi\right)  \right)  \left(  x\right)  =\mathbb{E}\left[  
\phi\left(  X_{t+s}^{x}\right)  \right].
\] 
 In the same way one generalizes Corollary \ref{cor:EtoTwistedP}; 
  therefore we have in our case, by repeatedly using the Markov property:
  \begin{eqnarray*}
 P_t[P_t[P_t(1_{B_n}) \cdot 1_{B_n^c}]\cdot 1_{B_n^c}](x)=\\
\mathbb{E}[1_{B_n^c}(X^x_t)\mathbb{E}[1_{B_n^c}(X^y_{t}))\mathbb{E}[ 1_{B_n}(X^z_{t})]_{z=X^y_{t}}]_{y=X^y_{t}}]=\\
\mathbb{E}[1_{B_n^c}(X^x_{t})1_{B_n^c}(X^x_{2t})1_{B_n}(X^x_{3t})]
\end{eqnarray*} 
 By inserting in the integral above  we get
 $$
q_{2,n}=\frac{\bigintsss \mathbb{E}[1_{B_n^c}(X^x_{t})1_{B_n^c}(X^x_{2t})1_{B_n}(X^x_{3t})] 1_{B_n}(x)d\mu(x)}{\mu(B_n)}.
 $$
 Since the last characteristic function $1_{B_n}$ in the integrand does not depend on the noise, we could rewrite $q_{2,n}$ as
$$
q_{2,n}=\mathbb{E}\left[\frac{\bigintsss 1_{B_n^c}(X^x_{t})1_{B_n^c}(X^x_{2t})1_{B_n}(X^x_{3t}) 1_{B_n}(x)d\mu(x)}{\mu(B_n)}\right].
$$
 For almost all $\omega\in \Omega$ the term in the square bracket gives the relative $\mu$-measure of the points which are in $B_n,$ stay twice outside it and come back for the first time at the third step. This generalizes to the other $q_{k,n}$ which will give the relative measure of the measurable  events $$
 E_{k,n}:=\{\text{points which start in}\ B_n, \ \text{stay outside} \ k \ \text{times and return at the}\  k+1 \ \text{time}\}.$$ 
 
 Of course all the events $E_{k,n}$ are mutually  disjoint and therefore 
 $$
 \sum_{k=0}^{\infty}q_{k,n}\le 1.
 $$
 \end{proof}

\begin{proposition}\label{prop:R7}

For each $k\geq 0$ we have
\begin{equation*}
\lim_{n \rightarrow \infty}q_{k,n }=0.
\end{equation*}    
\end{proposition}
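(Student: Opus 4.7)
The plan is to exploit the probabilistic expression for $q_{k,n}$ derived in the proof of the preceding proposition, and then control the return probability to the shrinking ball $B_n$ by the upper Aronson bound from Theorem \ref{MPZ}.

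First, I would argue exactly as in the computation of $q_{2,n}$ that for arbitrary $k\geq 0$
\[
q_{k,n}=\mathbb{E}\!\left[\frac{\int 1_{B_n}(x)\,1_{B_n^c}(X^x_{h})\cdots 1_{B_n^c}(X^x_{kh})\,1_{B_n}(X^x_{(k+1)h})\,d\mu(x)}{\mu(B_n)}\right],
\]
obtained by iterating the duality between $\mathcal{L}_h$ and $P_h$, expanding the operator $\mathcal{L}_{h,n}^{k}$ via the generalization of Corollary \ref{cor:EtoTwistedP}, and noting that $(\mathcal{L}_h - \mathcal{L}_{h,n})f = 1_{B_n}\mathcal{L}_h f$. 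This is the exact analogue of the $k=2$ case treated already.

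The key step is to drop all the $1_{B_n^c}$ factors (each is bounded by $1$) and apply Fubini to obtain the upper bound
\[
q_{k,n}\;\leq\;\frac{1}{\mu(B_n)}\int_{B_n}\mathbb{P}\bigl(X^x_{(k+1)h}\in B_n\bigr)\,f_0(x)\,dx.
\]
Now I invoke the two-sided density bound of Theorem \ref{MPZ}(1): the transition density satisfies
\[
S_{(k+1)h}(x,y)\leq C_0\,g_{\lambda_0}\bigl((k+1)h,\theta_{(k+1)h}(x)-y\bigr)\leq C_0\bigl((k+1)h\bigr)^{-d/2},
\]
since $g_\lambda(t,z)=t^{-d/2}e^{-\lambda|z|^2/t}\leq t^{-d/2}$. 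Integrating over $y\in B_n$ gives
\[
\mathbb{P}\bigl(X^x_{(k+1)h}\in B_n\bigr)=\int_{B_n}S_{(k+1)h}(x,y)\,dy\leq C_0\bigl((k+1)h\bigr)^{-d/2}\,\mathrm{Leb}(B_n).
\]
Substituting back, the integral over $B_n$ contributes a factor of $\mu(B_n)$ which cancels the denominator, leaving
\[
q_{k,n}\;\leq\;C_0\bigl((k+1)h\bigr)^{-d/2}\,\mathrm{Leb}(B_n).
\]

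Finally, since $\mathrm{Leb}(B_n)=c_d\,e^{-d u_n}\to 0$ as $n\to\infty$ (because $u_n\to\infty$, as forced by the scaling $n\,\mu(B_n)\to\tau$ together with the boundedness of $f_0$ on any compact set), the right-hand side vanishes and therefore $q_{k,n}\to 0$. I do not foresee a serious obstacle here: once the representation of $q_{k,n}$ as a return probability to $B_n$ is accepted from the previous proposition, everything is driven by the Gaussian upper bound, and the shrinking of the ball in Lebesgue measure does the rest. The extremal index is therefore $\theta=1-\sum_{k\geq 0}q_k=1$.
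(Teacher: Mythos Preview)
Your argument is correct and reaches the same conclusion $q_{k,n}=O(\mathrm{Leb}(B_n))$, but it proceeds along a different route than the paper. The paper never passes to the probabilistic representation: it writes $q_{k,n}=\int 1_{B_n}\mathcal{L}_t f_{k,n}\,dm$ with $f_{k,n}=\mu(B_n)^{-1}\mathcal{L}_{t,n}^{k}(\mathcal{L}_t-\mathcal{L}_{t,n})f_0$, applies the $L^1\to C^1$ regularization of Lemma~\ref{lem:Rdregularization} (at the single time step $h$) to bound $\|\mathcal{L}_t f_{k,n}\|_\infty\le C_t\|f_{k,n}\|_{L^1}$, and then controls $\|f_{k,n}\|_{L^1}$ via the same estimate as in Proposition~\ref{prop:R5} together with the local lower bound on $f_0$. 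Your approach instead unwinds the operator expression into the return-probability formula and applies the Aronson upper bound of Theorem~\ref{MPZ} directly at time $(k+1)h$. Both routes ultimately rest on Theorem~\ref{MPZ}; yours is slightly more direct (it skips the intermediate regularization lemma), while the paper's stays entirely within the operator framework already built and only ever invokes the one-step kernel. One small point worth making explicit in your version: the constants $C_0,\lambda_0$ in Theorem~\ref{MPZ} depend on the time horizon $T$, so your $C_0$ really depends on $k$ through $T=(k+1)h$; this is harmless since $k$ is fixed when $n\to\infty$, but it is worth flagging. Also, the step $\mathrm{Leb}(B_n)\to 0$ uses the \emph{lower} bound on $f_0$ near $x_0$ (Lemma~\ref{lem:Lemma3_posdensity}), not just boundedness.
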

\begin{proof}
Let us introduce the function $f_{k,n}:=\frac{\cL^k_{t,n}(\mathcal{L}_t-\cL_{t,n})(f_0)}{\mu(B_n)};$ we have
$$
\lim_{n\rightarrow \infty}q_{k,n } = \lim_{n
\rightarrow \infty}\int (\mathcal{L}_t-\cL_{t,n })f_{k,n } dm
= \lim_{n \rightarrow \infty}\int 1_{B_{n }}\mathcal{L}_t f_{k,n }dm\le \lim_{n \rightarrow \infty}m(B_n)\|\mathcal{L}_t f_{k,n}\|_{\infty}.
$$

As in the proof of Proposition \ref{prop:R5},  by Lemma \ref{lem:Rdregularization},  the $C^1(\mathbb{R}^d)$ norm of $\mathcal{L}_tf_{k,n},$ and therefore its infinity norm, is bounded by $C_t\|f_{k,n}\|_{L^1(\mathbb{R}^d)}$. On the other hand  $C_t\|f_{k,n}\|_{L^1(\mathbb{R}^d)}\le  \frac{  C_t \text{Leb}(B_n)}{\mu(B_n)}$ by proposition \ref{prop:R5}. 
The ratio $\frac{\text{Leb}(B_n)}{\mu(B_n)}$  is  bounded by a constant, as shown in the proof of Proposition \ref{prop:R6}.
Then we get $$\lim_{n\rightarrow \infty}q_{k,n }\leq \text{Leb}(B_n)\frac{  C_t \text{Leb}(B_n)}{\mu(B_n)}\to 0.$$

\end{proof}

\subsection{Proof of Theorem \ref{thm:gen2}}\label{Pgen2}\label{app:repfo2}
In this section we can collect all the previous estimates and finally prove the main result of the paper.

\begin{proof}[Proof of Theorem \ref{thm:gen2}]

We recall some notations. 
 We consider   a given point in the phase  space  $x_0$  and a  sequence $u_n$   going to $0;$ then we denote by $B_{n}$ the ball $B\left(  x_{0},\exp\left(  -u_{n}\right)  \right)$. To compact notation we will write $g$ for $g_{x_0}:=-\log(d(x,x_0))$. We will denote with $\mathcal{L}_t$ the unperturbed transfer operator at time $t$ (see \eqref{eq:stoctransfer}) and with $\cl_{t,n}$  the perturbed transfer operator (see \eqref{def:twistedoperator}).    Let us also denote as $h>0$  the time  discretization step introduced in (\ref{TS}).

 Now, we rewrite \eqref{problem}, using  the notation above and remembering that $t_k=kh,$ as
{ 
\begin{equation} \label{eq:fromPtoE} \begin{aligned}
 & \mathbb{P} \otimes \mu \left(  \left\| X_{t_{k}}^x- x_0 \right\|
\leq\exp\left(  -u_{n}\right)  \text{ for every }k=0, \ldots ,n-1  \right)  \\
&  =\mathbb{P}\otimes \mu \left(  X_{t_{k}}^x\in B_{n}^{c}\text{ for every }%
k=0,\ldots,n -1 \right)  \\
 &= \int_{\Omega \times \mathbb{R}^d}  
1_{B_{n}^{c}} \left(X_{t_{0}}^x (\omega)\right)   \cdots
  1_{B_{n}^{c}} \left(  X_{t_{n-1}}^x(\omega)\right) d\omega d\mu(x)   \\
& = \int_{\mathbb{R}^d} 
\bE\left[ 1_{B_{n}^{c}} \left(X_{t_{0}}^x \right)   \cdots
  1_{B_{n}^{c}} \left(  X_{t_{n-1}}^x \right) \right] d\mu(x)  .
\end{aligned}
\end{equation}
}
Thus we can reformulate (\ref{problem}) by identifying sequences $\{ u_n \}_{n \in \bN}$ such that 
\begin{equation}
\lim_{n\rightarrow\infty}\int_{\mathbb{R}^d} 
\bE\left[ 1_{B_{n}^{c}} \left(X_{t_{0}}^x \right)   \cdots
  1_{B_{n}^{c}} \left(  X_{t_{n-1}}^x \right) \right] d\mu(x)   
\in\left(  0,1\right)  .\label{problem 1}%
\end{equation}

By using  Corollary \ref{cor:EtoTwistedP} together with (\ref{mod}), after recalling that the invariant measure $\mu$ is absolutely continuous w.r.t to Lebesgue with   density $f_{0}$, we are finally able to write the distribution of the maxima in an operator-like way as
\begin{equation} \label{pe}
\begin{split}
& \mathbb{P}\otimes \mu \left( \left( \max_{k=0,...,n-1}g_{x_0} \left(  X_{t_k}^x \right)\right) \leq u_{n} \right) \\
& =\int_{D} \bE\left[ 1_{B_{n}^{c}} (X_{t_{0}}^x )   \cdots
  1_{B_{n}^{c}}   (X_{t_{n-1}}^x )1(X_{t_{n-1}}^x ) \right] f_0(x) dx \\
  & = \int \left(P_{t_{0},n}\circ P_{t_{1}-t_{0},n}\circ\cdot\cdot\cdot\circ P_{t_{n}-t_{n-1},n }\right)(1)(x)f_0(x)dx=
\int \cl_{h,n}^{n}f_0dx
\end{split}
\end{equation}  
where $\cl_{h,n}^{n}$ denotes the $n$-th power of the operator $\cl_{h,n}$. \\

Now we apply Proposition \ref{thm:repfo} to the transfer operator $\cL_h$ and to its perturbations $\cL_{h,n}$.
We consider as a strong space the space $\cB(\bR^d) = BV_{2}$ and $L^1(\bR^d)$ as a weak space.
The assumptions of section \ref{sec:perturbativereminder} are verified by $\cL_h$ and by  $\cL_{h,n}$ thanks to the sufficient conditions quoted in   section \ref{sec:Rdperturbation}; therefore
we have the following spectral decomposition for the perturbed operator $\cL_{h,n}:$

\begin{equation} \label{eq:decomposition}
 \lambda_{h,n}^{-1}  \cL_{h,n} = f_{h,n} \otimes \mu_{h,n} + Q_{h,n}
\end{equation}
where  $ f_{h,n}\in BV_2 ,  \  \mu_{h,n} \in  (BV_2 )'$ and $  Q_{h,n} : BV_2  \to BV_2$ is a bounded operator with spectral radius uniformly bounded in $n$ by some $\rho<1.$ Moreover $\cl_{h,n}f_{h,n}=\lambda_{h,n}f_{h,n}$ and $\mu_{h,n}\cl_{h,n}=\lambda_{h,n}\mu_{h,n}.$\\

By denoting with $\langle \mu_{h,n},g\rangle$ the action of the linear functional $\mu_{h,n}$ over $g\in BV_2,$ we have

\begin{equation}\label{eq:perturbation}
\cl_{h,n} g=\lambda_{h,n} f_{h,n} \langle \mu_{h,n},g\rangle + \lambda_{h,n} Q_{h,n} (g);
\end{equation}
moreover we use the normalization $\int f_{h,n} dx=1$ and $\langle \mu_{h,n},f_{h,n}\rangle=1.$

Thus it is sufficient to control $\int (\cl_{h,n}^{n} f_0)(x) dx $  by plugging \eqref{eq:perturbation} in it.
Since we have a direct sum decomposition of our operator, we can iterate and get
\begin{equation} \label{eq:naction}
\int (\cl_{h,n}^{n} f_0)(x) \ dx =\lambda^{n}_{h,n} \langle \mu_{h,n},f_0\rangle    + \lambda^{n}_{h,n}\int (Q^n_{h,n} f_0)(x)\ dx.
\end{equation}

Now,   $1$  is the largest unique eigenvalue of  the unperturbed operator $\cl_h$   and by proposition \ref{thm:repfo} and  proposition \ref{prop:R7} we have $\theta=1,$ therefore:
\begin{equation} \label{eq:leadingperturb}
\lambda_n=1- \mu(B_n)+o(\mu(B_n)).
\end{equation}

Then by substituting \eqref{eq:leadingperturb} in \eqref{eq:naction} we have
\begin{equation} \label{eq:perturbation3}
\begin{split}
\int (\cl_{h,n}^{n} f_0)(x) \ dx & =e^{n\log(1- \mu(B_n)+o(\mu(B_n))}\left[\langle \mu_{h,n},f_0\rangle + \int Q^n_{h,n}f_0\ dx \right]\\
& =e^{-n \mu_0(B_n)+no(\mu_0(B_n))}\left[\langle \mu_n,f_0\rangle  + \int Q^n_{h,n} f_0\ dx\right].
\end{split}
\end{equation}

Let us now recall that by \cite[Lemma 6.1]{KL09} we have  $ \langle \mu_n,f_0\rangle \rightarrow 1.$ 
Moreover by \eqref{qqcc} we get a uniform   exponential  convergence to zero of $$ \int Q^n_{h,n} f_0\ dx \leq \|Q^n_{h,n} f_0\|_{L^1}\le \|Q^n_{h,n} f_0\|_{BV_2} \le \text{Const} \ \rho^n.$$  
 Now, as assumed in the statement of Proposition \ref{thm:gen2}, we choose  the sequence $\{ u_n \}_{n \in \bN}$ and a $\tau \in \bR, \tau > 0$ such that  
\begin{equation} \label{eq:hyplimit}
n\, \mu(B_n)\rightarrow \tau.
\end{equation} 
  Thus
\begin{equation} \label{eq:stop}
\int \cl_{h,n}^n f_0 \ dx\rightarrow e^{-\tau}
\end{equation}
proving \eqref{th2}.\\

 \end{proof}

\section{Poisson statistics}\label{sec:Poisson}
\begin{proof}[Proos of Theorem \ref{thm:Poisson}]
For this proof, similarly to the proof of Theorem \ref{thm:gen2} we will apply proposition \ref{thm:repfo}.
We start by computing the characteristic function of the random variable $S_{n}=\sum_{i=0}^{n-1}1_{B_n}(X^x_{ih}):$
\begin{equation}\label{CCFF}
\Phi_n(s)=\int e^{isS_{n}}d\mathbb{P}d\mu=\int \mathbb{E}(e^{isS_{n}})f_0 dx,
\end{equation}
where $f_0$ is the density of $\mu.$ We then introduce the perturbed operators for $f\in BV_2(\mathbb{R}^d):$\footnote{We now require that our functions have valued in $\mathbb{C};$  in this case  the oscillation is defined as $\text{osc}(f, S)=\text{esssup}|f(x)-f(y)|, x,y\in S.$ With this definition the  function spaces and the norms considered extend straightforwardly to the complex case. We will keep denoting these spaces as $L^1_{\alpha}(\mathbb{R}^d)$ and $BV_{\alpha}(\mathbb{R}^d)$ also in the complex valued case.}

\begin{equation}\label{NPO}
\mathcal{L}_{t,n} f(x)=e^{is1_{B_n}(x)}\mathcal{L}_tf(x), 
\end{equation}
\begin{equation}\label{NPO2}
\mathcal{P}_{t,n}f(x)=\mathbb{E}(e^{is1_{B_n}(X^x_t)}f(X^x_t))
\end{equation}
By using  (\ref{eq:stoctransfer})          we get,   for $f\in BV_2(\mathbb{R}^d), g \in L^{\infty}:$
\begin{equation}\label{hhhh}
\int \mathbb{E}(e^{is1_{B_n}(X^x_t)}g(X^x_t))f(x)dx=\int \mathcal{P}_{t,n} g(x) f(x)dx=\int \mathcal{L}_{t,n} f(x)g(x) dx.
\end{equation}

We then observe that Lemma \ref{ror} holds for the new operator $\mathcal{P}_{t,n}$  just by replacing  the characteristic function $1_{B_n}$ with  $e^{is1_{B_n}}.$ 
\begin{lemma}
For every $t,s\geq0$,  $\phi \in L^\infty\left( \bR^d \right)$   
\[
\mathcal{P}_{t,n}\left(  \mathcal{P}_{s,n}\left(
\phi\right)  \right)  \left(  x\right)  =\mathbb{E}\left[  e^{is1_{B_n}\left(  X_{t}^{x}\right)}%
 e^{is1_{B_n}\left(  X_{t+s}^{x}\right)}
\phi\left(  X_{t+s}^{x}\right)  \right].
\]
\end{lemma}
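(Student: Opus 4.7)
The proof will follow exactly the template of Lemma \ref{ror}, the only change being that the indicator $1_{B_n^c}$ is replaced by the bounded measurable function $x\mapsto e^{is\,1_{B_n}(x)}$. Since the argument in Lemma \ref{ror} only used that the indicator was a bounded measurable (in particular $\mathcal{F}_t$-measurable when composed with $X_t^x$) function of the process, the same scheme applies verbatim here.

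First I would unwind the outer operator using Definition \eqref{NPO2}:
\[
\mathcal{P}_{t,n}\bigl(\mathcal{P}_{s,n}(\phi)\bigr)(x)=\mathbb{E}\!\left[e^{is\,1_{B_n}(X_t^x)}\,\mathcal{P}_{s,n}(\phi)(X_t^x)\right].
\]
Then I would evaluate $\mathcal{P}_{s,n}(\phi)$ at the (random) point $y=X_t^x$, again by its definition, so that the inner expectation runs over an independent copy of the SDE started from the deterministic point $y$:
\[
\mathcal{P}_{s,n}(\phi)(y)=\mathbb{E}\!\left[e^{is\,1_{B_n}(X_s^y)}\phi(X_s^y)\right].
\]

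Next, the decisive step is the Markov property: conditioning on $\mathcal{F}_t$ and using that the increment of the SDE after time $t$ depends on $\mathcal{F}_t$ only through $X_t^x$, one identifies
\[
\mathbb{E}\!\left[e^{is\,1_{B_n}(X_s^y)}\phi(X_s^y)\right]\bigg|_{y=X_t^x}
=\mathbb{E}\!\left[e^{is\,1_{B_n}(X_{t+s}^x)}\phi(X_{t+s}^x)\,\big|\,\mathcal{F}_t\right].
\]
Plugging this back in and pulling the $\mathcal{F}_t$-measurable factor $e^{is\,1_{B_n}(X_t^x)}$ inside the conditional expectation gives
\[
\mathcal{P}_{t,n}\bigl(\mathcal{P}_{s,n}(\phi)\bigr)(x)=\mathbb{E}\!\left[\mathbb{E}\!\left[e^{is\,1_{B_n}(X_t^x)}e^{is\,1_{B_n}(X_{t+s}^x)}\phi(X_{t+s}^x)\,\big|\,\mathcal{F}_t\right]\right],
\]
and the tower property yields the claimed identity.

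The only points where care is needed are: (i) measurability of $e^{is\,1_{B_n}}$, which is trivial since $1_{B_n}$ is Borel; (ii) integrability in the inner expectation, which holds because $\phi\in L^\infty$ and the exponential factors have modulus one, so all expectations are finite. No extension or new estimate beyond what was used for Lemma \ref{ror} is required; the proof is literally a transcription, and I would mark it as such rather than expanding the calculation in full.
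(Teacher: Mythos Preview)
Your proposal is correct and matches the paper's own approach: the paper states the lemma without proof, noting only that Lemma~\ref{ror} holds for $\mathcal{P}_{t,n}$ verbatim once $1_{B_n^c}$ is replaced by $e^{is\,1_{B_n}}$. Your write-up is precisely that transcription, with the measurability and integrability checks you mention being the only (trivial) points to verify.
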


Using this and the equalities (\ref{hhhh}), we have
\begin{equation}\label{fedeu}
\Phi_n(s)=\int e^{isS_{n}}d\mathbb{P}d\mu=\int \mathcal{L}_{t,n}^nf_0(x)dx.
    \end{equation}

We now apply Theorem \ref{thm:repfo} to the operators $\mathcal{L}_{t,n},$ using $BV_2$ and $L^1$ as a strong and weak space. In order to do this, we check the assumptions (R1) to (R4), in particular (R1'), (R2'), (R3), (R4).
We will omit some detail, as the proofs are similar to what is done in Section \ref{sec:Rdperturbation}.
\begin{itemize}
\item (R1')  Lasota-Yorke inequality. We follow Propositions  \ref{prop:verR3_Rd} and  Proposition \ref{correz1}.  Let first compute the norm $\|\mathcal{L}^m_{t,n}f\|_{L^1_2},\ m\ge 1.$
 We have
 $$
 \|\mathcal{L}^m_{t,n}f\|_{L^1_2}= \|\mathcal{L}_{t,n}\mathcal{L}^{m-1}_{t,n}f\|_{L^1_2}=\|\mathcal{L}_t\mathcal{L}^{m-1}_{t,n}f\|_{L^1_2}
 \le 
 $$
 $$
\|\mathcal{L}_t|\mathcal{L}^{m-1}_{t,n}f|\|_{L^1_2|}\le 
\|\mathcal{L}_t^2|\mathcal{L}^{m-2}_{t,n}f|\|_{L^1_2}\le \dots \le  \|\mathcal{L}_t^m |f|\|_{L^1_2}.
$$
We finally apply lemma \ref{lem:pbound} to the rightmost quantity  to get the equivalent of Proposition \ref{correz1}.
   We now need the equivalent of Proposition \ref{prop:verR3_Rd}, in particular we have to estimate the oscillation seminorm of $e^{is1_{B_n}}\mathcal{L}_tf.$
   Using the formula\footnote{ If $u,v\in BV_2$  and $B$ a measurable set, then  $\text{osc}(uv, B)\le \text{osc}(u,B)\text{esssup}_B v+\text{osc}(v,B)\text{essinf}_B|u|.$}, we get
   $$
   \|e^{is1_{B_n}}\mathcal{L}_tf\|_{\text{osc}(\mathbb{R}^d)}\le \sup_{0<\eta\le1}\frac{1}{\eta}\int\text{osc}(\mathcal{L}_tf, B_{\eta}(x)) d\psi+
   $$
   $$\sup_{0<\eta\le1}\frac{1}{\eta}\int \text{osc}(e^{is1_{B_n}}, B_{\eta}(x)) \sup|\cl_tf|d\psi:=(I)+(II)
   $$
   The first  piece (I) on the right hand side is $\|\mathcal{L}_tf\|_{\text{osc}(\mathbb{R}^d)}$ and is bounded as in (\ref{eq:RdstrongLY}). For the second one, we now distinguish 
   two cases. We suppose first than $\eta<e^{-u_n};$
   the oscillation in the integral will contribute only when the balls $B_{\eta}(x)$ will cross at the same time the ball $B_n$ and its complement. In this case the oscillation will be constant and equal to $|e^{is}-1|$ on the $2\eta$-neighborhood of the ball $B_n.$ If we call $S_n$ such a neighborhood as   in the proof of Proposition \ref{prop:verR3_Rd}, the Lebesgue  measure of $S_n$ will be bounded 
 by a constant $\tilde{C}$ (depending on $d$) times $\eta.$ In the second case, $\eta\ge e^{-u_n}$ only the points  belonging to $B_{2\eta}(B_n)$ will contribute to the integral and the measure of these points  is the volume of the hyper sphere of radius $\eta$ which is $O(\eta^d).$ Then we get as in (\ref{ttt}), where we also introduced the constant $\hat{C}:$
 $$
 (II)\le \sup_{0<\eta\le 1}\frac{\tilde{C}\eta}{\eta} ||\mathcal{L}_tf||_{L^{\infty} (\psi, S_n)}\psi(S_n)\le
2\hat{C}\tilde{C} ||f||_{L^1_2(\mathbb{R}^d)}||\psi'||_{L^{\infty}(\mathbb{R}^d)}
 $$
 Therefore we get
 $$
 \|e^{is1_{B_n}}\mathcal{L}_tf\|_{\text{osc}(\mathbb{R}^d)}\le C_{t,2}  \|\mathcal{L}_tf\|_{\text{osc}(\mathbb{R}^d)}+\overline{C}||f||_{L^1_2(\mathbb{R}^d)}\le[ C_{t,2} +\overline{C}]\left\Vert f\right\Vert _{L^{1}_{2}\left(  \bR^{d}\right)  },
 $$
 where $\overline{C}=2\hat{C}\tilde{C} ||\psi'||_{L^{\infty}(\mathbb{R}^d)}.$ We then put $C^{@}:= C_{t,2} +\overline{C}$ and  we continue as
 $$
 \|\mathcal{L}^m_{t,n}f\|_{\text{osc}(\mathbb{R}^d)}= \|\mathcal{L}_{t,n}\mathcal{L}^{m-1}_{t,n}f\|_{\text{osc}(\mathbb{R}^d)}=\|e^{is1_{B_n}}\mathcal{L}_t(\mathcal{L}^{m-1}_{t,n}f)\|_{\text{osc}(\mathbb{R}^d)}\le
 $$
 $$
C^{@}\|\mathcal{L}^m_{t,n}f\|_{L^{1}_{2}\left(  \bR^{d}\right)}\le C^{@} \|\mathcal{L}_t^{m-1} |f|\|_{L^1_2}.
 $$
 We therefore bound the right hand side of the previous inequality with lemma \ref{lem:pbound} and this will finally give us the Lasota-Yorke inequality, since $\|\mathcal{L}^m_{t,n}f\|_{L^{1}\left(  \bR^{d}\right)}\le \|f\|_{L^{1}\left(  \bR^{d}\right)}.$
 
 \item (R2') We have to bound the quantity $ ||\cL_{t,n}-\cL_{t}||_{BV_{2}\to L^1}\leq \pi_n,$ where the quantity $\pi_n$ was defined at item R2 in section \ref{sec:perturbativereminder}.
 
 We have for $f\in BV_2$ and using Lemma \ref{lem:Rdregularization}: 

$$||(\mathcal{L}_{t,n}-\cL_{t})(f)||_{L^1}=
\int_ {B_n}|e^{is}-1||\mathcal{L}_tf|dx\le 2\text{Leb}(B_n)\|\mathcal{L}_tf\|_{C^1(\mathbb{R}^d)}\le 
$$
$$
2\text{Leb}(B_n)C_t \|f\|_{L^1(\mathbb{R}^d)}\le 2\text{Leb}(B_n)C_t \|f\|_{BV_2},
$$
   with $\pi_n=2\text{Leb}(B_n)C_t.$
 \item (R3') The closeness of the two operators is also quantified by 
$$
\Delta_n=\int (\mathcal{L}_t-\mathcal{L}_{t,n})f_0dx=(1-e^{is})\mu(B_n).
$$ 
We have now to show that 
$$\pi_n\|(\mathcal{L}_{t,n}-\mathcal{L}_t)(f_0)\|_{BV_2}\le \text{constant} \ |\Delta_n|.
$$
Since the density $f_0$ is locally bounded away from zero, see Lemma \ref{lem:Lemma3_posdensity}, it will be enough to show that the quantity $\|(e^{is1_{B_n}}-1)\mathcal{L}_t(f_0)\|_{BV_2}$ is bounded by a constant independent of $n.$ This follows by exactly the same arguments which allowed us to bound the $BV_2$ norm of $e^{is1_{B_n}}\mathcal{L}_t$ in item (R1').
   \item (R4) The quantities $q_k$\footnote{We use the same symbol as for the $q_k$ introduced in item R4 in section \ref{sec:perturbativereminder}}
    associated to this perturbation will therefore have the form
\begin{equation}\label{newq}
    q_k=\lim_{n\rightarrow \infty}\frac{1}{\Delta_n}\int (\mathcal{L}_t-\mathcal{L}_{t,n})\mathcal{L}^k_n(\mathcal{L}_t-\mathcal{L}_{t,n})(f_0)dx,
\end{equation}
provided the limits exists.
We now show that all these quantities are zero, just by repeating the proof of Proposition \ref{prop:R7}. Let us consider the function $g_{k,n}:=\frac{\mathcal{L}^k_{t,n}(\mathcal{L}_t-\mathcal{L}_{t,n})(f_0)}{(1-e^{is})\mu(B_n)}.$ We have
$$
q_k=\lim_{n\rightarrow \infty}\int (\mathcal{L}_t-\mathcal{L}_{t,n})g_{k,n}dx=\lim_{n\rightarrow \infty}\int_{B_n}(1-e^{is})\mathcal{L}_tg_{k,n} dx\le 
$$
$$
\lim_{n\rightarrow \infty}|1-e^{is}|\text{Leb}(B_n)\|\mathcal{L}_tg_{k,n}\|_{\infty}.
$$
By using lemma \ref{lem:Rdregularization}, we have that   $\|\mathcal{L}_tg_{k,n}\|_{\infty}\le C_t \|g_{k,n}\|_{L^1(\mathbb{R}^d)},$ and it is immediate to check that $\|g_{k,n}\|_{L^1(\mathbb{R}^d)}\le 1.$
 In conclusion the limit defining $q_k$ will tend to $0$ when $n\rightarrow \infty.$\\
   
  \end{itemize}

As in the proof of the Gumbel's law,  we need that, given the  number $\tau,$ the measure of the set $B_n$ scales like $n\mu(B_n)\rightarrow \tau, n\rightarrow \infty.$ 

Then we get for the top eigenvalue $\iota_n$ of $\mathcal{L}_{t,n},$ see Proposition \ref{thm:repfo}
$$
\iota_n=1-(1-\sum_{k=0}^{\infty}q_k)\Delta_n+o(\Delta_n)
=1-(1-e^{is})\mu(B_n)+o(\mu(B_n)).
$$
Since the assumptions of section \ref{sec:perturbativereminder} are verified by $\mathcal{L}_t$ and by  $\mathcal{L}_{t,n}$ thanks to the sufficient conditions quoted in   section \ref{sec:Rdperturbation}, we could repeat the steps from eq. (\ref{eq:perturbation}) to (\ref{eq:perturbation3}) showing that the leading term in the growing of $\Psi_n(s)$ is just given by the $n$-th power of $\iota_n.$
Therefore
$$
\lim_{n\rightarrow \infty}\Phi_n(s)=\lim_{n\rightarrow \infty}\int e^{isS_{n}}d\mathbb{P}d\mu=\lim_{n\rightarrow \infty}\iota_n^n=e^{- (1-e^{is}) \tau}:=\Phi(s).
$$

Notice that this is just the pointwise limit of the characteristic function of the random variable
$$
S_{n,\tau}:=\sum_{i=0}^{\lfloor\frac{\tau}{\mu(B_n)}\rfloor}1_{B_n}(X^x_{ih}).
$$
Since $\Phi(s)$ is  continuous in $s=0,$ it is the characteristic function of a random variable $W$ to which $S_{n,\tau}$ converges in distribution. But such a limiting variable $W$ has the Poisson distribution

$$
\nu_{W}(\{k\})=\frac{e^{-\tau}\tau^k}{k!}.
$$
\end{proof}

\appendix  
 
\section{Dependance from $h$ of the limit law}\label{A3}

Our main result, Theorem \ref{thm:gen2}, is formulated assuming the time discretization of step $h$ by considering $X_{nh}$. However,  note that the right hand side limiting law of the statement, equation \eqref{th2},  does not depend on $h$.   
Let us notice that the limit for $h \to 0$, of the above results does not provide more information: this does not come as a surprise as the passage from discrete to continuous usually requires a rescaling.

\begin{proposition}
For every $h>0$, let the rest of the datum be as in Theorem  \ref{thm:gen2}. It holds.
\[
\lim_{n\rightarrow\infty}\bP \otimes \mu \left( \left\{ (\omega,x) : \max_{t\in\left[  0,nh\right]
}g\left(  X_{t}^x \right)  \leq u_{n} \right\} \right)  =0.
\]
\end{proposition}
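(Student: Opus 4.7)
The natural idea is that asking a continuous trajectory to avoid $B_n$ on the whole interval $[0,nh]$ is strictly stronger than asking it to avoid $B_n$ only at the $n$ sampling times $t_k=kh$, and refining the sampling should drive the probability to zero. Concretely, for any fixed integer $M\ge 1$ the continuity of paths of the SDE gives the inclusion
\begin{equation*}
\bigl\{X_t^x\notin B_n\text{ for all }t\in[0,nh]\bigr\}\;\subseteq\;\bigl\{X_{kh/M}^x\notin B_n\text{ for all }k=0,\dots,nM-1\bigr\}.
\end{equation*}
Integrating against $\mathbb{P}\otimes\mu$, the left-hand probability is bounded above by the probability of the discrete event on the refined grid.

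Next, I would apply Theorem \ref{thm:gen2} (more precisely, the argument that produces it in Section \ref{app:repfo2}) with the discretization step replaced by $h'=h/M$ and with the sample count $n'=nM$, keeping the same sequence of holes $B_n$. Along the sub-sequence $n'=nM$ the scaling becomes
\begin{equation*}
n'\,\mu(B_n)=nM\,\mu(B_n)\longrightarrow M\tau,
\end{equation*}
so by the same spectral computation that gives \eqref{eq:stop} (now for the operator $\mathcal{L}_{h/M,n}$ iterated $nM$ times, with leading eigenvalue $1-\mu(B_n)+o(\mu(B_n))$ by Proposition \ref{thm:repfo} together with Proposition \ref{prop:R7}), one obtains
\begin{equation*}
\lim_{n\to\infty}\mathbb{P}\otimes\mu\bigl(X_{kh/M}^x\notin B_n\text{ for }k=0,\dots,nM-1\bigr)=e^{-M\tau}.
\end{equation*}
Combining with the inclusion above yields
\begin{equation*}
\limsup_{n\to\infty}\mathbb{P}\otimes\mu\Bigl(\max_{t\in[0,nh]}g(X_t^x)\le u_n\Bigr)\le e^{-M\tau}.
\end{equation*}
Since $M$ is arbitrary and $\tau>0$, letting $M\to\infty$ gives the limit zero, as required.

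The only real technical point is to justify that Theorem \ref{thm:gen2} (or the representation \eqref{pe}–\eqref{eq:stop}) can be invoked with step $h/M$ rather than $h$. All estimates in Section \ref{S3} (regularization, Lasota–Yorke, positivity of the invariant density in Lemma \ref{lem:Lemma3_posdensity}) are uniform enough that the construction goes through unchanged for any positive step, so this is not a genuine obstacle, only a bookkeeping issue. A second minor point is to confirm path continuity of $X_t^x$, which is standard under hypotheses \textbf{H} and \textbf{AD}, to ensure that the inclusion above is valid for Lebesgue-almost every $x$ and $\mathbb{P}$-almost every $\omega$.
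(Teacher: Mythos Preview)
Your proof is correct and follows essentially the same route as the paper: bound the continuous-time event by the discrete event on the refined grid $h/M$, show this probability converges to $e^{-M\tau}$, and let $M\to\infty$. The only cosmetic difference is that the paper, in order to apply Theorem~\ref{thm:gen2} \emph{as stated} (which requires a full sequence $u_n$ with $n\mu(B_n)\to\tau$), introduces an auxiliary sequence $\{v_j\}$ with $v_{nM}=u_n$ and $j\mu(B(x_0,e^{-v_j}))\to M\tau$, then restricts to the subsequence $j=nM$; you instead appeal directly to the spectral computation \eqref{pe}--\eqref{eq:stop} with $\mathcal{L}_{h/M,n}^{nM}$, which is an equally valid (and arguably cleaner) way to reach the same conclusion.
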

Note, with respect to Theorem \ref{thm:gen2} that here $ t \in [0,nh]$.  

\begin{proof}
Let $M>0$ be an integer.  In the rest of the  proof, we shorten $\bP \otimes \mu \left(\{ (\omega, x) : \ldots \} \right) := \bP \otimes \mu ( \ldots )$, characterizing the sets considered by the inequality that define it. 
We have
\[
\mathbb{P} \otimes \mu\left(  \max_{t\in\left[  0,nh\right]  }g\left(  X_{t}^x\right)  \leq
u_{n}\right)  \leq\mathbb{P} \otimes \mu\left(  \max_{k=0,...,nM}g\left(  X_{k\frac{h}{M}%
}^x\right)  \leq u_{n}\right)
\]
hence%
\[
\limsup_{n \to \infty} \mathbb{P} \otimes \mu\left(  \max_{t\in\left[  0,nh\right]  }g\left(
X_{t}^x\right)  \leq u_{n}\right)  \leq\lim_{n\rightarrow\infty}\mathbb{P} \otimes \mu%
\left(  \max_{k=0,...,nM}g\left(  X_{k\frac{h}{M}}^x\right)  \leq u_{n}\right)
.
\]
Now, if $n\mu_{0}\left(  B\left(  x_{0},e^{-u_{n}}\right)  \right)
\rightarrow\tau$, then
\[
nM\mu_{0}\left(  B\left(  x_{0},e^{-u_{n}}\right)  \right)  \rightarrow\tau M.
\]
Consider a sequence $\left\{  v_{j}\right\}  $ such that $v_{nM}=u_{n}$
for every $n$. We have%
\[
nM\mu_{0}\left(  B\left(  x_{0},e^{-v_{nM}}\right)  \right)  \rightarrow\tau
M.
\]
One can define $\left\{  v_{j}\right\}  $ in such a way that
\[
j\mu_{0}\left(  B\left(  x_{0},e^{-v_{j}}\right)  \right)  \rightarrow\tau M
\]
as $j\rightarrow\infty$, not only along the subsequence $j_{n}=nM$. Thus, by
the theorem (applied with $\frac{h}{M}$ in place of $h$)%
\[
\lim_{j\rightarrow\infty}\mathbb{P} \otimes \mu\left(  \max_{k=0,...,j}g\left(
X_{k\frac{h}{M}}\right)  \leq v_{j}\right)  =e^{-\tau M}%
\]
and thus, for the subsequence $j_{n}=nM$,
\[
\lim_{n\rightarrow\infty}\mathbb{P} \otimes \mu\left(  \max_{k=0,...,nM}g\left(
X_{k\frac{h}{M}}\right)  \leq v_{nM}\right)  =e^{-\tau M}.
\]
But $v_{nM}=u_{n}$. Hence%
\[
\lim\sup\mathbb{P} \otimes \mu \left(  \max_{t\in\left[  0,nh\right]  }g\left(
X_{t}\right)  \leq u_{n}\right)  \leq e^{-\tau M}.
\]
Since this holds true for every $M$, we complete the proof.
\end{proof}

The proposition indicates that in order  to extend the result to continuous time, one  might require a not obvious rescaling (see also on the matter \cite[Section 3.6]{1LR}). 

\section{Appendix} \label{A4}

An elementary result for independent identically distributed random variables
$\left(  X_{n}\right)  _{n}$ with law $\mu$ tells us that%
\[
\lim_{n\rightarrow\infty}\mathbb{P}\left(  X_{1}\notin B_{n},...,X_{n}\notin
B_{n}\right)  =e^{-\tau}%
\]
if $\left(  B_{n}\right)  _{n}$ is a sequence of Borel sets such that
$\lim_{n\rightarrow\infty}n\mu\left(  B_{n}\right)  =\tau$. In this appendix we
would like to mimic, in the framework of sequences of random variables, a
question arising in this paper for diffusions. The question has to do with the
intuition about our result for small $h$. Let us explain the issue. 

For large $h$, the SDE considered in this paper loses memory and thus the
sample $\left(  X_{nh}\right)  _{n}$ is made of r.v.'s which are roughly
independent. Thus the final result is not heuristically surprising, in view of
the previous remark on sequences of independent random variables.

But for small $h$, the sample $\left(  X_{nh}\right)  _{n}$ is highly
correlated. Why should we expect the same result? For instance, assume (this
is not the case for the SDE) that a sequence $\left(  Y_{n}\right)  _{n}$ of
r.v.'s is made of constant blocks of length $M$:%
\begin{align*}
Y_{1}  & =...=Y_{M}=X_{1}\\
& ...\\
Y_{\left(  n-1\right)  M+1}  & =...=Y_{nM}=X_{n}%
\end{align*}
with $\left(  X_{n}\right)  _{n}$ iid as above with law $\mu$. Then
\begin{align*}
& \mathbb{P}\left(  Y_{i}\notin B_{nM}\text{ for }i=1,2,...,nM\right)  \\
& =\mathbb{P}\left(  Y_{kM}\notin B_{nM},\text{ for }k=1,2,...,n\right)
=\left(  1-\mu\left(  B_{nM}\right)  \right)  ^{n}\\
& \sim\exp\left(  -n\mu\left(  B_{nM}\right)  \right)  \rightarrow
e^{-\frac{\tau}{M}}%
\end{align*}
instead of $e^{-\tau}$. 

The previous "counterexample" does not take into account the presence of
fluctuations at small time-scale, which exists in the case of the SDE. Let us
show the role of such fluctuations. Let $\left(  Y_{n}\right)  _{n}$ be so
defined: there is an integer $M>0$ such that
\begin{align*}
Y_{1} &  =X_{1}+W_{1},...,Y_{M}=X_{1}+W_{M}\\
Y_{M+1} &  =X_{2}+W_{M+1},...,Y_{2M}=X_{2}+W_{2M}\\
&  ...\\
Y_{\left(  k-1\right)  M+1} &  =X_{k}+W_{\left(  k-1\right)  M+1}%
,...,Y_{kM}=X_{k}+W_{kM}%
\end{align*}
where $\left(  W_{n}\right)  _{n}$ is a sequence of independent identically
distributed random variables with law $\nu$, independent of $\left(
X_{i}\right)  _{i}$. In other words,%
\begin{align*}
Y_{\left(  k-1\right)  M+i} &  =X_{k}+W_{\left(  k-1\right)  M+i}\\
\text{for }i &  =1,...,M\text{, for every }k=1,2,...
\end{align*}
Now the law of $Y_{n}$ is the convolution $\mu\ast\nu$. 

\begin{theorem}
Assume%
\begin{equation}
\lim_{n\rightarrow\infty}\sup_{x\in\mathbb{R}}\nu\left(  \,B_{n}+x\right)
=0\label{sufficiently diffuse}%
\end{equation}
and
\[
\lim_{n\rightarrow\infty}n\left(  \mu\ast\nu\right)  \left(  \,B_{n}\right)
=\tau.
\]
Then%
\[
\lim_{n\rightarrow\infty}\mathbb{P}\left(  Y_{i}\notin B_{nM}\text{ for
}i=1,2,...,nM\right)  =e^{-\,\tau}.
\]

\end{theorem}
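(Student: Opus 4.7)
The plan is to condition on the values of $X_1,\dots,X_n$ and exploit that, within a single block, the only randomness after conditioning comes from the $M$ independent $W$-shifts, while different blocks are fully independent. Concretely, setting $p_n(x):=\nu(B_{nM}-x)$, the event $\{Y_{(k-1)M+i}\notin B_{nM}\;\forall i\}$ conditional on $X_k=x$ has probability $(1-p_n(x))^M$ by independence of the $W_{(k-1)M+i}$. Independence across blocks then gives
\begin{equation*}
\mathbb{P}(Y_i\notin B_{nM}\text{ for }i=1,\dots,nM)=\bigl(\mathbb{E}_{X\sim\mu}[(1-p_n(X))^M]\bigr)^n.
\end{equation*}

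The next step is a uniform Taylor expansion. Since $\sup_x\nu(B_n+x)\to 0$ and $\sup_x\nu(B_n+x)=\sup_x\nu(B_n-x)$, we have $\varepsilon_n:=\sup_x p_n(x)\to 0$. For $M$ fixed, $(1-p)^M=1-Mp+R_M(p)$ with $|R_M(p)|\le 2^M p^2$ once $p\le 1/2$, hence uniformly in $x$,
\begin{equation*}
(1-p_n(x))^M=1-Mp_n(x)+O_M(\varepsilon_n\,p_n(x)).
\end{equation*}
By Fubini, $\mathbb{E}_{X\sim\mu}[p_n(X)]=\int\nu(B_{nM}-x)\,d\mu(x)=(\mu\ast\nu)(B_{nM})$, so
\begin{equation*}
\mathbb{E}_{X\sim\mu}[(1-p_n(X))^M]=1-M(\mu\ast\nu)(B_{nM})+o((\mu\ast\nu)(B_{nM})).
\end{equation*}

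Finally I apply the scaling hypothesis. Writing $n'=nM$, the assumption $\lim_{j\to\infty}j(\mu\ast\nu)(B_j)=\tau$ (applied along $j=nM$) gives $nM\cdot(\mu\ast\nu)(B_{nM})\to\tau$, equivalently $n\cdot M(\mu\ast\nu)(B_{nM})\to\tau$. Plugging this into the previous display and raising to the $n$-th power,
\begin{equation*}
\bigl(1-M(\mu\ast\nu)(B_{nM})+o((\mu\ast\nu)(B_{nM}))\bigr)^n\;\longrightarrow\;e^{-\tau},
\end{equation*}
which yields the stated limit.

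The one step that requires care is that the Taylor remainder be uniformly negligible compared to the first-order term \emph{after} integration against $\mu$; this is precisely what hypothesis \eqref{sufficiently diffuse} secures, by making $\sup_x p_n(x)\to 0$ so that $R_M(p_n(X))=o(p_n(X))$ uniformly and hence in $\mu$-expectation. Without this assumption, mass of $\nu$ could be concentrated where $\mu$ has little weight and the cross-term would spoil the limit, producing (as in the constant-block counterexample preceding the theorem) a different exponent.
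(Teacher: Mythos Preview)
Your proof is correct and follows essentially the same approach as the paper's: you factor over independent blocks, condition on $X_k$ to obtain $\bigl(\int(1-\nu(B_{nM}-x))^M\,\mu(dx)\bigr)^n$, expand $(1-p)^M$ to first order, identify the linear term as $M(\mu\ast\nu)(B_{nM})$, and use the sufficiently-diffuse hypothesis to kill the quadratic and higher terms. The only cosmetic differences are that the paper takes logarithms and writes out the full binomial expansion term by term, whereas you package the higher-order part into a single remainder bound $|R_M(p)|\le 2^M p^2$ and pass directly to $(1-a_n)^n\to e^{-\tau}$.
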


\begin{proof}
We have%
\begin{align*}
& \mathbb{P}\left(  Y_{i}\notin B_{nM}\text{ for }i=1,...,nM\right)  \\
& =\mathbb{P}\left(  X_{1}+W_{j}\notin B_{nM}\text{ for }j=1,...,M\right)
^{n}.
\end{align*}
Now%
\begin{align*}
& \mathbb{P}\left(  X_{1}+W_{j}\notin B_{nM}\text{ for }j=1,...,M\right)  \\
& =\int P\left(  W_{j}\notin\,B_{nM}-x\text{ for }j=1,...,M\right)  \mu\left(
dx\right)  \\
& =\int\left(  1-\nu\left(  \,B_{nM}-x\right)  \right)  ^{M}\mu\left(
dx\right)  .
\end{align*}
Passing to logarithms, we have to prove that
\[
\lim_{n\rightarrow\infty}n\log\int\left(  1-\nu\left(  \,B_{nM}-x\right)
\right)  ^{M}\mu\left(  dx\right)  =-\,\tau.
\]
It is then sufficient to prove that
\[
\int\left(  1-\nu\left(  \,B_{nM}-x\right)  \right)  ^{M}\mu\left(  dx\right)
=1-\frac{\tau}{n}+o\left(  \frac{1}{n}\right)
\]
namely that
\begin{align*}
& \int M\nu\,\left(  B_{nM}-x\right)  \mu\left(  dx\right)  \\
& -\int\left(  \frac{M\left(  M-1\right)  }{2}\nu\left(  \,B_{nM}-x\right)
^{2}+...+\nu\left(  \,B_{nM}-x\right)  ^{M}\right)  \mu\left(  dx\right)  \\
& =\frac{\tau}{n}+o\left(  \frac{1}{n}\right)  .
\end{align*}
Since
\begin{align*}
\int M\nu\,\left(  B_{nM}-x\right)  \mu\left(  dx\right)    & =M\left(
\mu\ast\nu\right)  \left(  \,B_{nM}\right)  =\frac{1}{n}nM\left(  \mu\ast
\nu\right)  \left(  \,B_{nM}\right)  \\
& =\frac{\tau}{n}+o\left(  \frac{1}{n}\right)
\end{align*}
it remains to prove that
\[
\int\nu\left(  \,B_{nM}-x\right)  ^{k}\mu\left(  dx\right)  =o\left(  \frac
{1}{n}\right)
\]
for every integer $k\geq2$. One has%
\begin{align*}
\int\nu\left(  \,B_{nM}-x\right)  ^{k}\mu\left(  dx\right)    & \leq\sup
_{x}\nu\left(  \,B_{nM}-x\right)  \int\nu\left(  \,B_{nM}-x\right)  \mu\left(
dx\right)  \\
& =o\left(  1\right)  \left(  \frac{\tau}{nM}+o\left(  \frac{1}{n}\right)
\right)
\end{align*}
by the assumption and the previous computation, hence we have proved the
result. 
\end{proof}

The assumption (\ref{sufficiently diffuse}) on $\nu$, which we could call
"sufficiently diffuse", is not satisfied by a delta Dirac, which is the case
of the counter-example made before the theorem (it was the case $\nu=\delta
_{0}$, namely $W_{n}=0$ for every $n$). 

To summarize, the two ingredients that seem to have a fundamental importance
for our result are a loss of memory plus sufficiently rich local fluctuations.

\section*{Acknowledgement}
P.G acknowledges the support of the Centro di Ricerca Matematica Ennio de Giorgi and of UniCredit Bank R\&D group for financial support through the “Dynamics and Information Theory Institute” at Scuola Normale Superiore. P.G. was partially supported by INDAM - GNFM project "“Deterministic and stochastic dynamical systems for climate studies” and by PRIN 2022NTKXCX. The research of SV was supported by the project {\em Dynamics and Information Research Institute} within the agreement between UniCredit Bank and Scuola Normale Superiore di Pisa and by the Laboratoire International Associ\'e LIA LYSM, of the French CNRS and  INdAM (Italy). SV thanks the Mathematical Research Institute MATRIX,
the Sydney Mathematical Research Institute (SMRI), the University of New South Wales, and the University of Queensland for their support and hospitality and where 
 part of this research was performed. SV was also supported by the project MATHAmSud TOMCAT 22-Math-10, N. 49958WH, du french  CNRS and MEAE. 
 The research of S.G. was partially supported by  the
research project "Stochastic properties of dynamical systems" (PRIN 2022NTKXCX) of the Italian Ministry of Education and Research. S.G. also thanks Aix-Marseille Université for hospitality during the research.

\section*{Conflicts of interests, Competing interests.}

The authors have no conflict of interests or competing interests to declare that are relevant to the content of this article.

\printbibliography

\end{document}